\newtheorem*{main-thm-no-num}{Main Theorem}
\newtheorem*{thm-no-num}{Theorem}
\newtheorem*{df-no-num}{Definition}
\newtheorem{thm}{Theorem} [section]
\newtheorem{prop}[thm]{Proposition} 
\newtheorem{lm}[thm]{Lemma} 
\newtheorem{cor}[thm]{Corollary} 
\theoremstyle{remark}
\theoremstyle{definition} 
\newtheorem {df}[thm]{Definition}
\newcommand{\Ccal}{\mathcal{C}}
\newcommand{\Dcal}{\mathcal{D}}
\newcommand{\Ga}{\mathbb{G}_{\rm a}}
\newcommand{\Gm}{\mathbb{G}_{\rm m}}
\newcommand{\Lcal}{\mathcal{L}}
\newcommand{\Mcal}{\mathcal{M}}
\newcommand{\Ncal}{\mathcal{N}}
\newcommand{\Ocal}{\mathcal{O}}
\newcommand{\PP}{\mathbb{P}}
\newcommand{\Pcal}{\mathcal{P}}
\newcommand{\Rcal}{\mathcal{R}}
\newcommand{\Scal}{\mathcal{S}}
\newcommand{\Vcal}{\mathcal{V}}
\newcommand{\Wcal}{\mathcal{W}}
\newcommand{\Zcal}{\mathcal{Z}}
\newcommand{\ZZ}{\mathbb{Z}}
\DeclareMathOperator{\B}{B}
\DeclareMathOperator{\C}{C}
\DeclareMathOperator{\CH}{CH}
\DeclareMathOperator{\GL}{GL}
\DeclareMathOperator{\id}{id}
\DeclareMathOperator{\im}{im}
\DeclareMathOperator{\pr}{pr}
\DeclareMathOperator{\Spec}{Spec}
\DeclareMathOperator{\Sym}{Sym}
\newenvironment{enumeratea}
{\begin{enumerate}[\upshape (a)]}
{\end{enumerate}}
\newenvironment{enumerate1}
{\begin{enumerate}[\upshape (1)]}
{\end{enumerate}}
\newenvironment{enumerateitem}
{\begin{enumerate}[$\bullet$]}
{\end{enumerate}}
\newtheorem*{namedtheorem}{\theoremname}
\newcommand{\theoremname}{testing}
\newtheorem{theorem}{Theorem}[section]
\newtheorem{proposition-definition}[theorem]
{Proposition-Definition}
\theoremstyle{definition}
\newtheorem{definition}[theorem]{Definition}
\newtheorem{notation}[theorem]{Notation}
\theoremstyle{remark}
\newcommand\nome{testing}
\newcommand\call[1]{\label{#1}\renewcommand\nome{#1}}
\newcommand\itemref[1]{\item\label{\nome;#1}}
\newcommand\refpart[2]{(\ref{#1;#2})}
 \newcommand\cB{\mathcal{B}}
 \renewcommand\cD{\mathcal{D}}
 \newcommand\cF{\mathcal{F}}
\newcommand\cI{\mathcal{I}} 
\newcommand\cM{\mathcal{M}} 
\newcommand\cO{\mathcal{O}} \newcommand\cP{\mathcal{P}}
 \newcommand\cV{\mathcal{V}}
 \newcommand\cX{\mathcal{X}}
\newcommand\cY{\mathcal{Y}} \newcommand\cZ{\mathcal{Z}}
\newcommand\GG{\mathbb{G}}
 \renewcommand\PP{\mathbb{P}}
 \renewcommand\ZZ{\mathbb{Z}}
\newcommand\rC{\mathrm{C}} 
 \newcommand\rF{\mathrm{F}}
\newcommand\rI{\mathrm{I}} 
 \newcommand\rN{\mathrm{N}}
\newcommand\rma{\mathrm{a}}
\newcommand\rmm{\mathrm{m}}
\newcommand\fM{\mathfrak{M}}
\newcommand\arr{\ifinner\to\else\longrightarrow\fi}
\newcommand\arrto{\ifinner\mapsto\else\longmapsto\fi}
\newcommand{\xarr}{\xrightarrow}
\renewcommand\H{\operatorname{H}}
\newcommand{\eqdef}{\mathrel{\smash{\overset{\mathrm{\scriptscriptstyle def}} =}}}
\def\displaytimes_#1{\mathrel{\mathop{\times}\limits_{#1}}}
\def\displayotimes_#1{\mathrel{\mathop{\bigotimes}\limits_{#1}}}
\newcommand\spec{\operatorname{Spec}}
\newcommand{\proj}{\operatorname{Proj}}
\newcommand\generate[1]{\langle #1 \rangle}
\newcommand{\underhom}{\mathop{\underline{\mathrm{Hom}}}\nolimits}
\newcommand{\underaut}{\mathop{\underline{\mathrm{Aut}}}\nolimits}
\newcommand{\undersym}{\mathop{\underline{\mathrm{Sym}}}\nolimits}
\newlength{\ignora}
\renewcommand{\setminus}{\smallsetminus}
\newcommand{\rest}[1]{{\mid_{#1}}}
\newcommand{\mmu}{\boldsymbol{\mu}}
\newcommand{\gm}{\GG_{\rmm}}
\newcommand{\gmp}[1]{\GG_{\rmm, #1}}
\newcommand{\PGL}{\mathrm{PGL}}
\newcommand{\ga}{\GG_{\rma}}
\DeclareFontFamily{U}{mathx}{\hyphenchar\font45}
\DeclareFontShape{U}{mathx}{m}{n}{
      <5> <6> <7> <8> <9> <10>
      <10.95> <12> <14.4> <17.28> <20.74> <24.88>
      mathx10
      }{}
\DeclareSymbolFont{mathx}{U}{mathx}{m}{n}
\DeclareMathAccent{\widecheck}{0}{mathx}{"71}
\DeclareMathAccent{\wideparen}{0}{mathx}{"75}
\newcommand{\smallbullet}{\mathchoice{}{}%
{\raisebox{.14ex}{\scalebox{.6}\textbullet}}%
{\raisebox{.09ex}{\scalebox{.5}\textbullet}}%
}%
\renewcommand{\epsilon}{\varepsilon}
\newcommand{\cha}{\operatorname{char}}
\newcommand{\aff}[1][k]{(\operatorname{Aff}/#1)}
\newcommand{\ch}[1][*]{\operatorname{CH}^{#1}}
\begin{document}
\title[Polarized twisted conics and stable curves of genus two]%
{Polarized twisted conics and \\ moduli of stable curves of genus two}
\author[A. Di Lorenzo]{Andrea Di Lorenzo}
    \address[A. Di Lorenzo]{Aarhus University, Ny Munkegade 118, DK-8000 Aarhus C, Denmark}
    \email{andrea.dilorenzo@math.au.dk}
\author[A. Vistoli]{Angelo Vistoli}
    \address[A. Vistoli]{Scuola Normale Superiore, Piazza dei Cavalieri 7, 56126 Pisa, Italy}
    \email{angelo.vistoli@sns.it}
\thanks{Both authors have been partially supported by research funds from Scuola Normale Superiore}
\begin{abstract}
    In this paper we introduce the stack of polarized twisted conics and we use it to give a new point of view on $\overline{\Mcal}_2$. In particular, we present a new and independent approach to the computation of the integral Chow ring of $\overline{\Mcal}_2$, previously determined by Eric Larson.
\end{abstract}
\maketitle
\section*{Introduction}
Moduli stacks of curves play a prominent role in algebraic geometry. In particular, their rational Chow rings have been the subject of intensive research in the last forty years, since Mumford first investigated the subject in \cite{Mum}, and many explicit computations of rational Chow rings of moduli of curves have been produced over the years.

There is also a well defined notion of \emph{integral} Chow ring for these stacks, defined with the equivariant approach of \cite{EG}: this is more refined, but also much harder to compute. Almost all of the calculations that have been carried out (\cites{EG, EFRat, EF, FV, Dil, FVis, DLFV, Per}) have been for stacks of that can be presented as quotients $[(V\smallsetminus Y)/G]$, where $Y\subset V$ is a $G$-invariant closed subscheme of a $G$-representation $V$ for some linear algebraic group $G$ (we will refer to this as \emph{an easy presentation}). The single exception is the remarkable paper \cite{Lars}, in which Eric Larson computes the Chow ring of the stack $\overline{\cM}_{2}$ of stable curves, which is not known to have an easy presentation, by using the fact that the closed substack $\Delta_{1} \subseteq \overline{\cM}_{2}$ of curves with a separating node, and its complement $\overline{\cM}_{2} \setminus \Delta_{1}$, do have easy presentations. These presentations are completely different, and it is not know how to patch these together to give a useful presentation of $\overline{\cM}_{2}$ as a quotient stack. Larson overcomes this with a judicious use of the first higher equivariant Chow ring of $\overline{\cM}_{2} \setminus \Delta_{1}$. The result is the following.

\begin{thm-no-num}[E. Larson] The integral Chow ring of $\overline{\Mcal}_2$ is 
    \[
    \ZZ[\lambda_1,\lambda_2,\delta_1]/(2\delta_1^2+2\lambda_1\delta_1, \delta_1^3+\delta_1^2\lambda_1, 
    24\lambda_1^2-48\lambda_2,20\lambda_1\lambda_2-4\delta_1\lambda_2)
    \]
where the $\lambda_{i}$ are Chern classes of the Hodge bundle on $\overline{\cM}_{2}$, and $\delta_{1}$ is the class of $\Delta_{1} \subseteq \overline{\cM}_{2}$.
\end{thm-no-num}

In this paper we present a different approach to the study of the geometry of $\overline{\cM}_{2}$, and its Chow ring, which has some interesting features and is applicable to other cases of interest. Our goal is twofold:
\begin{enumerate}
    \item to introduce the stack $\Pcal$ of polarized twisted conics, and to show how the integral Chow ring of this stack can be computed \emph{without having a good quotient presentation}, by patching the Chow rings of two complementary substacks of $\Pcal$.
    \item to show how the stack $\overline{\Mcal}_2$ can be obtained as an open substack of a vector bundle $\Vcal$ over $\Pcal$, and how this leads to a presentation of $\CH^*(\overline{\Mcal}_2)$ after computing only the fundamental classes of some excised loci in $\Vcal$.
\end{enumerate}
All the main results of the paper can be summarized by the following.
\begin{main-thm-no-num}
Suppose that the characteristic of the base field is $\neq 2,3$. 
\begin{itemize}
    \item There exists a smooth algebraic stack $\Pcal$ whose objects are polarized twisted conics and whose integral Chow ring is
    \[ \CH^*(\Pcal)\simeq \ZZ[\lambda_1,\lambda_2,\delta_1,\eta]/(2\eta,\eta(\lambda_1\delta_1+\eta)). \]
    \item There exists a vector bundle $\Vcal$ over $\Pcal$ and closed substacks $\Dcal^3$, $\Zcal\subset\Vcal$ such that
    \[ \overline{\Mcal}_2\simeq \Vcal \smallsetminus \left(\Dcal^3\cup\Zcal\right).\]
    \item There exist another closed substack $\Dcal^{3,3}$ and a cycle $\zeta$ such that
    \[ \CH^*(\overline{\Mcal}_2)\simeq \CH^*(\Vcal)/([\Zcal],[\Dcal^3],\zeta,[\Dcal^{3,3}]). \]
    \item The integral Chow ring of $\overline{\Mcal}_2$ is isomorphic to
    \[ \ZZ[\lambda_1,\lambda_2,\delta_1]/(2\delta_1^2+2\lambda_1\delta_1, \delta_1^3+\delta_1^2\lambda_1, 24\lambda_1^2-48\lambda_2,20\lambda_1\lambda_2-4\delta_1\lambda_2). \]
\end{itemize}
\end{main-thm-no-num} 
In particular, we reprove Larson's theorem without using higher intersection theory, or test families. 

The stack $\overline{\cM}_{2} \setminus \Delta_{1}$ has an easy presentation of the type $[(V\smallsetminus Y)/\GL_{2}]$, where $V$ is a $7$-dimensional representation of $\GL_{2}$ (this is a straightforward extension of the presentation of $\cM_{2}$ used in \cite{VisM2}). This can be interpreted as follows: consider the vector bundle $\cV_{0} \ := [V/\GL_{2}]$ over the gerbe $\cB\GL_{2}$; then $\overline{\cM}_{2} \setminus \Delta_{1}$ is an open substack of $\cV_{0}$. 

Here we introduce a stack $\cP$, the stack of polarized twisted conics, that contains $\cP_{0} := \cB\GL_{2}$ as an open substack, whose complement $\cP_{1}$ is also the classifying stack of an algebraic group, an extension of $\cV_{0}$ to a vector bundle $\cV$ on $\cP$, and an open embedding $\overline{\cM}_{2} \subseteq \cV$. There is a natural rank~$2$ vector bundle $\Lambda$ on $\cP$ whose pullback to $\overline{\cM}_{2}$ is the Hodge bundle.

Since by homotopy invariance we have a canonical isomorphism of Chow rings $\CH^{*}(\cV) = \CH^{*}(\cP)$, we have that $\CH^{*}(\overline{\cM}_{2})$ is a quotient of $\CH^{*}(\cP)$. While $\cP$ does not have a good presentation, it Chow ring can be computed starting from the Chow rings of $\cP_{0}$ and $\cP_{1}$, and using the fact that the top Chern class of the normal bundle of $\cP_{1}$ in $\cP$ is not a zero divisor in the Chow ring $\ch(\cP_{1})$. This patching technique is at the basis of the Borel-Atiyah-Segal-Quillen localization theorem, and has been used by many authors working on equivariant cohomology, equivariant Chow rings and equivariant K-theory \cites{AtiEll, BreEq, Hsi, ChSk, KiQuo, GKMP, Bri, VV}. It has several other applications to calculations of Chow rings of stacks of curves, both integral and rational, which will be the subject of further work by the authors and by Michele Pernice. The result is that we have a presentation
   \[
   \ch(\cP) = \ZZ[\lambda_{1}, \lambda_{2}, \delta_{1}, \eta]/\bigl(2\eta, \eta(\lambda_{1}\delta_{1} + \eta)\bigr)\,,
   \]
where the $\lambda_{i}$ are the Chern classes of $\Lambda$, $\delta_{1}$ is the class of $\cP_{1}$, and $\eta$ is a certain codimension~$2$ class coming from the involution in $\cP_{1}$ that exchanges the two components of a reducible twisted curve.

This gives generators for $\ch(\overline{\cM}_{2})$. The next step is to find the relations; we find three natural closed substack $\cZ$, $\cD^{3}$ and $\cD^{3,3}$ of $\cV$, whose union is the complement of $\overline{\cM}_{2}$ in $\cV$, and we prove using geometric argument that the kernel of the surjection $\ch(\cP) = \ch(\cV) \arr \ch(\overline{\cM}_{2})$ is generated by the classes of $\cZ$, $\cD^{3}$ and $\cD^{3,3}$, and by an additional cycle $\zeta$. 

Finally, we compute the classes $[\cZ]$, $[\cD^{3}]$, $[\cD^{3,3}]$ and $\zeta$ in $\ch(\cP)$; this is rather technical, and in a key step we have to use the integral Grothendieck--Riemann--Roch theorem of Pappas \cite{Pap}, which is only known to hold in characteristic~$0$; fortunately, a small additional step allows to extend the result to characteristic different from $2$ and $3$.

\subsection*{Motivation for the introduction of twisted conics}
The quotient of a smooth curve $X$ of genus two by the hyperelliptic involution is a smooth conic $C$, and the quotient morphism is flat and finite of degree two, i.e. $X\to C$ is a flat double cover. This is a key ingredient to give a good presentation of $\Mcal_2$ as a quotient stack and eventually compute its integral Chow ring, as done by the second author in \cite{VisM2}.

This still works for the stack of stable curves of genus~$2$ that are not in $\Delta_{1}$, as these are all double covers of smooth conics. If $X$ is in $\Delta_{1}$, however, if $X \arr \overline{C}$ is the quotient by the hyperelliptic involution, then $C$ is singular conic, but the map $X \arr \overline{C}$ is not flat at the node.  On the other hand, the map above factors through the twisted conic $C$ obtained from $\overline{C}$ by adding a stacky structure of index $2$ at the node, in the sense of \cite{AVComp}, and the morphism $X\to C$ is actually a flat double cover. The standard description of double covers gives an embedding of $X$ in $L$, where $L$ is a line bundle of degree $1$ in the smooth case and bidegree $(\frac{1}{2},\frac{1}{2})$ in the singular case (the half degree makes sense thanks to the stacky structure).

The stack of smooth conics with a line bundle of degree~$1$ is the classifying stack $\cB\GL_{2}$; the stack $\cP$ which is the foundation of our construction is the stack of twisted conics, that is, either smooth conics, or twisted conics as described above.

\subsection*{Outline of the paper}
We summarize here the contents of the Sections. More detailed descriptions can be found at the beginning of the respective Sections.

In Section \ref{sec:twisted conics} we introduce twisted conics and polarized twisted conics. We study their local structure and their behavior in families. We define the stack $\Pcal$ of polarized twisted conics, and study its structure in detail. In particular, we show that it is a smooth quotient stack. We also introduce the vector bundle $\Lambda$ on $\cP$.

In Section \ref{sec:stable curves} we show how to obtain $\overline{\Mcal}_2$ as an open substack of a vector bundle $\Vcal$ over $\Pcal$, and show that $\Lambda$ pulls back to the Hodge bundle on $\overline{\cM}_{2}$.

In Section \ref{sec:chow P} we compute the integral Chow ring of $\Pcal$, the stack of polarized twisted conics, using the patching Lemma.

In Section \ref{sec:abstract} we give an abstract characterization of the integral Chow ring of $\overline{\Mcal}_2$: more precisely, building on our previous study of the Chow ring of $\Pcal$, we compute the generators of $\CH^*(\overline{\Mcal}_2)$ and we characterize the generators of the ideal of relations as fundamental classes of some excised loci in $\Vcal$.

In Section \ref{sec:concrete} we complete the computation of $\CH^*(\overline{\Mcal}_2)$: with the aid of some equivariant intersection theory, we produce explicit expressions for the relations, reproving Larson's result.

In the Appendix \ref{sec:app} we prove some results on deformation theory of nodal singularities. These results are used in the previous sections to study normal bundles of closed substacks.

\subsection*{Notation and assumptions}
We fix a base field $k$, of characteristic different from $2$ and $3$. All schemes and all morphisms considered will be defined over $k$; all stacks will be \'{e}tale stacks over the category $\aff$ of affine schemes over $k$. A product $X \times Y$ of schemes, or stacks, over $k$ will always be a fiber product over $\spec k$.

If $G$ is an algebraic group over $k$, we will denote by $\cB G$ the classifying stack of $G$-torsors.

Chow rings of quotient stacks are defined in \cite{EG}; we will not need the more general construction of \cite{Kre}.

\subsection*{Acknowledgments}
Part of this project has been developed while the authors were in residence at the Mathematical Sciences Research Institute in Berkeley, California, during the Spring 2019 semester. We acknowledge the support of the National Science Foundation under Grant No. DMS-1440140 and we wish to thank the staff at the MSRI for providing great working conditions.

We wish to thank Eric Larson for informing us of his result, and for the subsequent discussions.

\section{The stack of twisted conics}\label{sec:twisted conics}
In this Section, we define the stack $\Rcal$ of twisted conics (Definition \ref{def:stack R}) and the stack $\Pcal$ of polarized twisted conics (Definition \ref{def:stack P}). We prove that these stacks are algebraic (Propositions \ref{prop:R algebraic} and \ref{prop:Lcalzero iso BGLtwo}) and we describe a stratification of the stack $\Pcal$ of polarized twisted conics where each stratum is a classifying stack (Propositions \ref{prop:Lcalzero iso BGLtwo} and \ref{prop:Lcaluno iso BGamma}).
\subsection{Twisted conics}
If $S$ is a scheme, a \emph{conic} (short for reduced conic) over $S$ will be a morphism of schemes $\overline{C}\arr S$ which is proper, flat and finitely presented, whose geometric fibers are isomorphic to either $\PP^{1}$, or to the union of two copies of $\PP^{1}$ meeting transversally at a point.

 A \emph{twisted conic} over $S$ is an unmarked twisted curve $C \to S$, in the sense of \cite[Definition~2.1]{dan-olsson-vistoli-2}, whose geometric fibers are either $\PP^{1}$, or the union of two copies of $\PP^{1}$ meeting transversally at a point, with a stacky structure of index~$2$ at the node. Because of our hypotheses on $\cha k$, the stack $C$ is Deligne--Mumford. (Notice that our notation is different from that of \cite{dan-olsson-vistoli-2}, where the stack is denoted by $\Ccal$, and $C$ stands for its moduli space.) 

If $C \to S$ is a twisted conic, it is a tame Deligne--Mumford stack; hence formation of the moduli space $\overline{C} \arr S$ commutes with base change, and $\overline{C} \arr S$ is flat and finitely presented (see for example \cite[Corollary~3.3]{dan-olsson-vistoli1}); the fibers are clearly conics, hence $\overline{C}$ is a conic over $S$.

\begin{df}\label{def:stack R}
	We define $\Rcal \arr \aff$  as the stack of twisted conics, whose objects over an affine scheme $S$ are twisted conics $C \to S$. A morphism in $\Rcal$ from $C' \arr S'$ to $C \to S$ is an isomorphism class of $2$-cartesian diagrams
	\[
	\begin{tikzcd}
	C' \dar\rar &C\dar\\
	S' \rar & S
	\end{tikzcd}
	\]
	where the bottom arrow is a morphism of $k$-schemes (see \cite[Remark~2.5]{dan-olsson-vistoli-2}).
	We will call $\Rcal_{0} \subseteq \Rcal$ the open substack of smooth conics, whose objects are smooth proper families of curves of genus $0$, and $\Rcal_{1}$ the closed substack of singular curves.
\end{df}

If $C \to S$ is a proper flat finitely presented Deligne--Mumford stack, and the fiber over a geometric point $s_{0}\colon \spec\Omega \arr S$ is a twisted conic, there exists an open neighborhood $U$ of $s_{0}$ such that the restriction $C\rest U \arr U$ is a twisted conic (see \cite[Proposition~2.3]{dan-olsson-vistoli-2}). So $\Rcal$ is an open substack of the stack $\fM^{\mathrm{tw}}_{0,0}$ of unmarked twisted curves of genus~$0$.

\begin{prop}\label{prop:R algebraic}
The stack $\Rcal$ is algebraic, of finite type and smooth over $k$.
\end{prop}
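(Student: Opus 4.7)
The plan is to deduce all three properties from the fact, noted just before the statement, that $\Rcal$ embeds as an open substack of Olsson's stack $\fM^{\mathrm{tw}}_{0,0}$ of unmarked twisted curves of genus zero, together with a short deformation-theoretic check at the stacky node. Algebraicity is then immediate: $\fM^{\mathrm{tw}}_{0,0}$ is algebraic by Olsson's theorem on the algebraicity of the stack of twisted curves (used throughout \cite{dan-olsson-vistoli-2}), and open substacks of algebraic stacks are algebraic. Finite type is equally clean: over an algebraic closure of $k$ there are only two isomorphism classes of twisted conic, namely $\PP^1$ and the standard nodal twisted conic with a $\mu_2$-stacky node, and both have automorphism group schemes of finite type; these data assemble into a smooth atlas of finite type over $k$. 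Equivalently, fixing the index at each node to be at most $2$ cuts out a quasi-compact open part of $\fM^{\mathrm{tw}}_{0,0}$.

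The real content is smoothness, which I would check by infinitesimal lifting. Let $A \to A_{0}$ be a small surjection of Artin local $k$-algebras with kernel $I$, and let $C_{0} \to \spec A_{0}$ be an object of $\Rcal$; the obstruction to extending $C_{0}$ to a twisted conic over $A$ lives in an $\ext^{2}$-group of the cotangent complex $\LL_{C_{0}/A_{0}}$ with values in $I \otimes \cO_{C_{0}}$. By the local-to-global spectral sequence for $\ext$-groups of the cotangent complex of a nodal, or twisted nodal, curve, this $\ext^{2}$ sits in an exact sequence whose only possibly nonzero flanking terms are $H^{2}$ of the tangent sheaf on the smooth locus and $H^{1}$ of the local smoothing sheaf $\mathcal{E}xt^{1}(\LL_{C_{0}}, \cO_{C_{0}})$, which is concentrated at the stacky nodes. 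The first vanishes because the geometric fibers are $1$-dimensional; the second vanishes because its support is $0$-dimensional. Hence the obstruction is zero, a lift exists, and $\Rcal$ is formally smooth; being locally of finite type, it is smooth over $k$.

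The only step where the characteristic hypothesis is really used is the local analysis at the stacky node: the formal local model of a twisted node of index two is $[\spec k\ds{x,y}/(xy)\,/\,\mu_{2}]$, where $\mu_{2}$ acts by $(x,y)\mapsto (-x,-y)$, and one has to verify that the $\mu_{2}$-invariant part of the deformation complex of the ordinary node $\spec k\ds{x,y}/(xy)$ computes the correct obstructions and tangents for the twisted node. This is standard, and works because $\cha k \neq 2$ makes $\mu_{2}$ linearly reductive, so taking invariants is exact and the miniversal smoothing $xy = t$ of the underlying node descends. I expect this to be the main obstacle in the sense of being the only piece that is not purely formal; the detailed computation is carried out in \cite{dan-olsson-vistoli-2}, and beyond it everything reduces to bookkeeping from the open embedding $\Rcal \subseteq \fM^{\mathrm{tw}}_{0,0}$.
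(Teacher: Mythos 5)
Your proof is correct, and for algebraicity and finite type it follows the paper exactly: both rest on the open embedding $\Rcal \subseteq \fM^{\mathrm{tw}}_{0,0}$ and Olsson's algebraicity theorem, and both dispose of finite type by observing that $|\Rcal|$ consists of just two points (the smooth conic and the unique index-$2$ twisted structure on the nodal conic). The one genuine divergence is smoothness: the paper simply cites \cite[Theorem~A.6]{dan-olsson-vistoli-2}, which already asserts that $\fM^{\mathrm{tw}}_{0,0}$ is smooth over $k$, so that smoothness of the open substack $\Rcal$ is immediate; you instead re-derive it by an infinitesimal lifting argument. Your argument is sound, but note that in the local-to-global spectral sequence the group $\ext^{2}(\LL_{C_{0}/A_{0}}, I\otimes\cO_{C_{0}})$ receives a contribution not only from $\H^{2}(\mathcal{E}xt^{0})$ and $\H^{1}(\mathcal{E}xt^{1})$ but also from $\H^{0}(\mathcal{E}xt^{2})$; this third term vanishes because a twisted nodal curve is \'etale-locally the quotient by $\mmu_{2}$ of an ordinary node, hence a local complete intersection, so $\mathcal{E}xt^{2}(\LL_{C_{0}/A_{0}},-)=0$. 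With that term accounted for, your deformation-theoretic route buys a self-contained proof of smoothness (essentially reproving the relevant part of Olsson's theorem in this special case), at the cost of redundancy with the cited result; the paper's version is shorter but leans entirely on the reference.
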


\begin{proof}
By \cite[Theorem~A.6]{dan-olsson-vistoli-2}, the stack $\fM^{\mathrm{tw}}_{0,0}$ is algebraic, of finite type and smooth over $k$, so the same is true of $\Rcal$. The fact that $\Rcal$ is of finite type follows from the fact that there exists a unique twisted structure of index two on the union of two lines meeting transversally; hence the topological space $|\Rcal|$ has only two points, and $\Rcal$ is of finite type.
\end{proof}

\subsection{The local structure of twisted conics}\label{subsec:local-structure}
In order to better understand the structure of $\Rcal$ we need a local description of twisted conics. The following construction will be used extensively.

Let $R$ be a $k$-algebra, and $f \in R$. Consider the conic $D^{0}_{R,f} \eqdef \spec R[u,v]/(uv-f) \arr \spec R$; there is an action of $\mmu_{2}$ on $D^{0}_{R,f}$, by changing the signs of $u$ and $v$. The fixed point locus of this action is $F_{R,f} \eqdef \spec \bigl(R/(f)\bigr) = \spec R[u,v]/(f, u, v) \subseteq D^0_{R,f}$.

The ring of invariants $\bigl(R[u,v]/(uv-f)\bigr)^{\mmu_{2}}$ is easily checked to be $R[x,y]/(xy-f^{2})$, where $x = u^{2}$ and $y = v^{2}$. Call $C^{0}_{R,f} \arr S$ the stack theoretic quotient $[D_{R,f}/\mmu_{2}]$. The quotient $\Phi_{R,f} \eqdef [F_{R,f}/\mmu_{2}]$ is $\spec \bigl(R/(f)\bigr) \times \cB_{k}\mmu_{2}$.

Let $\overline{C}^{0}_{R,f} = \spec R[x,y]/(xy-f^{2})$ be its moduli space; there is a factorization $C^{0}_{R,f} \arr \overline{C}^{0}_{R,f} \arr \spec R$, and the projection $C^{0}_{R,f} \arr \overline{C}^{0}_{R,f}$ is an isomorphism outside of $\Phi_{R,f}$; in other words, we obtain an isomorphism $C^{0}_{R,f} \setminus \Phi_{R,f} \simeq  \overline{C}^{0}_{R,f} \setminus \spec \bigl(R/(f)\bigr)$, where $\spec \bigl(R/(f)\bigr)$ is embedded in $\overline{C}^{0}_{R,f}$ as $\spec R[x,y]/(f, x,y) \subseteq \spec R[x,y]/(xy - f)$.

Set
   \[
   \overline{C}_{R,f} \eqdef \proj R[x,y,z](xy - f^{2}z^{2}) \arr \spec R\,;
   \]
this is a conic over $\spec R$, containing $\overline{C}^{0}_{R,f}$ as an open subscheme, dense in every fiber. 
\begin{notation}\label{notation:CRf}
	We denote by $C_{R,f} \arr \spec R$ the stack obtained by gluing $C^{0}_{R,f}$ with $\overline{C}_{R,f} \setminus \spec \bigl(R/(f)\bigr)$ using the isomorphism $C^{0}_{R,f} \setminus \Phi_{R,f} \simeq \overline{C}^{0}_{R,f} \setminus \spec \bigl(R/(f)\bigr)$ above.
\end{notation} 
One easily checks that $C_{R,f} \arr \spec R$ is a twisted conic, with moduli space $C_{R,f} \arr \overline{C}_{R,f}$. The singular locus of the map $C_{R,f} \arr \spec R$, which coincides with the stacky locus of $C_{R,f}$, is $\Phi_{R,f} \subseteq C_{R,f}$.

We will also need the following. Consider the compactification
   \begin{equation}\label{eq:DRf}
   D_{R,f} \eqdef \proj R[u,v,w]/(uv - fw^{2})\,;
   \end{equation}
the action of $\mmu_{2}$ on $D^{0}_{R,f}$ extends to an action of $\mmu_{2}$ on $D_{R,f}$, which is free outside of $F_{R,f}$ and of the locus at infinity 
   \[
   E_{R,f} = \proj R[u,v](uv) = \spec R \sqcup \spec R \subseteq D_{R,f}\,.
   \]
The morphism $D^{0}_{R,f} \arr C^{0}_{R,f}$ extends to a flat morphism $D_{R,f} \arr C_{R,f}$. This is \'{e}tale outside of $E_{R,f}$; it is ramified of order $2$ along $E_{R,f}$. The composite $E_{R,f} \subseteq D_{R,f} \arr C_{R,f}$ is also a closed embedding, so we can (and will) also view $E_{R,f}$ as a subscheme of $C_{R,f}$.

The importance of this construction is revealed by the following.

\begin{prop}\label{prop:local-twisted-conic}
Let $C \to S$ be a twisted conic, $s\colon \spec\Omega \arr S$ a geometric point. Then $s$ has an \'{e}tale neighborhood $\spec\Omega \arr \spec R \arr S$, such that the pullback of $C$ to $\spec R$ is isomorphic to $C_{R,f}$ for some $f \in R$.
\end{prop}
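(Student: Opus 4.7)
The plan is to treat separately the case where the geometric fiber $C_{s}$ is smooth and the case where it is nodal; in the latter case, to combine an \'{e}tale-local structure theorem at the stacky node with a description of the coarse moduli $\bar{C}$ as a global conic.

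If $C_{s}$ is smooth, then $C$ has no stacky points in a Zariski neighborhood of $s$, so there $C$ agrees with its coarse moduli $\bar{C}$, a smooth conic bundle. Smooth conic bundles are \'{e}tale-locally isomorphic to $\PP^{1}$, so after passing to an \'{e}tale neighborhood $\spec R \arr S$ of $s$ we obtain $C|_R \cong \PP^{1}_R$. This matches $C_{R,1} = \bar{C}_{R,1} = \proj R[x,y,z]/(xy - z^{2}) \cong \PP^{1}_R$ (the stacky locus $\Phi_{R,1}$ is empty because $1$ is a unit), giving $C|_R \cong C_{R,1}$.

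Now suppose $C_{s}$ is nodal and $s$ lies under the node. One invokes the \'{e}tale-local structure theorem for twisted curves of Abramovich--Vistoli and Olsson (in the form cited via \cite{dan-olsson-vistoli-2} earlier in the paper). The versal deformation of the twisted node $[\spec \Omega[u,v]/(uv)/\mmu_{2}]$ is the family $[\spec k\ds{f}[u,v]/(uv-f)/\mmu_{2}]$, and by Artin approximation there is an \'{e}tale neighborhood $\spec R \arr S$ of $s$ together with an element $f \in R$ such that, in a Zariski neighborhood of the stacky locus, $C|_R$ is isomorphic to $C^{0}_{R,f}$. Taking $\mmu_{2}$-invariants shows that the coarse moduli satisfies $\bar{C}|_R \supseteq \bar{C}^{0}_{R,f} = \spec R[x,y]/(xy - f^{2})$; in particular, the index-$2$ twisted structure forces the deformation parameter of the node in $\bar{C}$ to be a square.

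The final step is globalization: $\bar{C}|_R \arr \spec R$ is a flat proper family of conics containing $\bar{C}^{0}_{R,f}$ as a dense affine open, and after possibly shrinking $R$ one identifies $\bar{C}|_R$ with the projective completion $\bar{C}_{R,f} = \proj R[x,y,z]/(xy - f^{2}z^{2})$. Gluing with the local identification $C|_R \cong C^{0}_{R,f}$ near the stacky locus then yields $C|_R \cong C_{R,f}$, as required. The main obstacle is this globalization step: one must check that the conic $\bar{C}|_R$ admits the stated global form, which ultimately relies on the fact that a flat proper family of conics is determined, up to further \'{e}tale-local isomorphism, by its local equation at the singular locus (via the natural embedding of the conic into a $\PP^{2}$-bundle coming from the inverse dualizing sheaf).
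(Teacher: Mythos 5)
Your strategy is genuinely different from the paper's: you read off the deformation parameter $f$ from the \'{e}tale--local structure of the twisted node, normalize the coarse conic $\overline{C}$ globally inside a $\PP^{2}$-bundle, and then try to glue; the paper instead builds a global double cover $D \arr C$ ramified along two sections at infinity (via a root stack, Grothendieck existence and Artin approximation) and normalizes the quadratic equation of $D$ rather than of $\overline{C}$. The smooth case and the normalization of the coarse quadric to $\proj R[x,y,z]/(xy-gz^{2})$ are fine. The gap is in the final gluing step, and it is precisely the point that the paper's detour through $D$ is designed to avoid.

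The problem is that the coarse space does not determine the twisted conic. Over $R=k[\epsilon]/(\epsilon^{2})$ the twisted conics $C_{R,\epsilon}$ and $C_{R,0}$ have the same coarse space $\proj R[x,y,z]/(xy-\epsilon^{2}z^{2})=\proj R[x,y,z]/(xy)$, but they are not isomorphic: the scheme-theoretic image in $\spec R$ of the singular locus $\Sigma_{C_{R,f}}=\Phi_{R,f}$ is $V(f)$, which distinguishes $\epsilon$ from $0$, whereas the corresponding invariant of the coarse space only sees $V(f^{2})$. Consequently, an isomorphism $\overline{C}\rest{R}\simeq \overline{C}_{R,f}$ together with a \emph{separately constructed} identification of $C\rest{R}$ with $C^{0}_{R,f}$ near the stacky locus does not yield $C\rest{R}\simeq C_{R,f}$: the coarse shadow of the local stacky isomorphism must agree with the restriction of the global one on the overlap, and nothing in your construction arranges this compatibility. (A secondary issue: the local structure theorem for twisted curves produces an \'{e}tale roof $C \leftarrow U \arr C^{0}_{R,f}$, not an isomorphism of Zariski neighborhoods of the stacky locus, so even the local half of your gluing datum is not in the form you use.) To repair the argument you would need either to extend the local stacky identification outward to all of $C$, or to prove that, \'{e}tale-locally on $S$, a family of nodal conics carries at most one index-$2$ twisted structure up to isomorphism compatible with a given coarse identification; both require real proofs. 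The paper sidesteps this entirely because the cover $D$, with equation $uv=fw^{2}$, remembers $f$ itself rather than $f^{2}$, and $C$ is recovered in one step from $D$ together with its $\mmu_{2}$-action.
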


\begin{proof}
The idea of the proof is to show that $C$ comes, \'{e}tale-locally, from a conic of the type $D_{R,f} \arr \spec R$, as above. The map $D_{R,f} \arr C_{R,f}$ is ramified of order $2$ along $E_{R,f} \subseteq C_{R,f}$, and \'{e}tale elsewhere. We will go in the inverse direction, by constructing, \'{e}tale-locally on $S$, a relative Cartier divisor $E \subseteq C$, showing that \'{e}tale-locally it has an double covering $D \arr C$ ramified along $E$, and then showing that Zariski-locally $D \simeq D_{R,f}$ for some $k$-algebra $R$ and some $f \in R$.

If the geometric fiber $C_{s}$ is smooth, the statement is clear; so, assume that $C_{s}$ is singular. Furthermore, since the stack $\Rcal$ is of finite type over $k$, we can also assume that $S$ is of finite type over $k$.

Pick two smooth points $p$ and $q$ in $C_{s}(\Omega)$ in the smooth locus, and two different components of the smooth locus of $C_{s}$. By passing to an \'{e}tale neighborhood we can assume that there are two disjoint sections $S \arr C$ landing in the smooth locus of $C \arr S$ whose inverse images in $C_{s}$ are $p$ and $q$ respectively. Call $E \subseteq C$ the union of these sections; we want to show that \'{e}tale-locally $C$ has a double cover $D \arr C$, where $D \arr S$ is a conic, ramified on $E$. This can be interpreted as an \'{e}tale-cover of the stack $\widetilde{C} \arr S$ obtained by taking a square root of the Cartier divisor $E\subseteq C$, in the sense of \cite[Appendix~B]{dan-tom-angelo2008}.

Set $C_{0} \eqdef C_{\Omega,0}$ (we are adopting here Notation \ref{notation:CRf}); then $C_{0}$ is the union of two copies of $\PP^{1}$ meeting transversally at a point, with a stacky structure of index~$2$ at the node. Set $D_{0} = D_{\Omega,0}$ (same notation as in (\ref{eq:DRf})); the ramified double cover $D_{0} \arr C$ gives an \'{e}tale double cover $D_{0} \arr \widetilde{C}_{0} \simeq \widetilde{C}_{s}$. Denote by $s_{0} \in S$ the image of $s\colon \spec\Omega \arr S$; this double cover will be defined over an intermediate extension $k(s_{0}) \subseteq k' \subseteq \Omega$ that is finite and separable over $k(s_{0})$. By passing to an \'{e}tale neighborhood, we can assume that $k' = k(s_{0})$. 

If $S' \subseteq S$ is an infinitesimal neighborhood, the \'{e}tale double cover $D_{s_{0}} \arr \widetilde{C}_{s_{0}}$ extends uniquely to an \'{e}tale double cover $D_{S'} \arr C_{S'}$. From Grothendieck's existence theorem for Deligne--Mumford stacks \cite[Proposition~2.1]{olsson-starr-quot} we obtain an \'{e}tale double cover $D_{\widehat{\cO}_{S,s_{0}}} \arr \widetilde{C}_{\widehat{\cO}_{S,s_{0}}}$. From Artin approximation we deduce that after passing to an \'{e}tale neighborhood there exists an \'{e}tale double cover $D \arr \widetilde{C}$, as claimed. Clearly, $D \arr S$ is a conic; the induced morphism $D \arr C$ is a double cover, and the embedding $E \subseteq C$ lifts to an embedding $E \subseteq D$. There is an action of $\mmu_{2}$ on $D$, acting trivially on $E$, such that $\widetilde{C} = [D/\mmu_{2}]$; thus, $\widetilde{C}$, and therefore $C$, are uniquely determined by $D$ and the action of $\mmu_{2}$. We can assume that $S = \spec R$ is affine. It is enough to prove that there is an element $f \in R$, and a $\mmu_{2}$ equivariant isomorphism of $S$-schemes $D \simeq D_{R,f}$.

Call $\pi\colon D \arr S$ the projection, and notice that the invertible sheaf $\cO_{D}(E)$ is ample on $D$, hence we get an embedding $D \subseteq \PP\bigl(\H^{0}(D,\cO_{D}(E))\bigr) = \PP^{2}_{R}$. We have an exact sequence
   \[
   0 \arr \H^{0}(D,\cO_{D}) \arr \H^{0}\bigl(D,\cO_{D}(E)\bigr) \arr \H^{0}(D,\cO_{E}) \arr 0\,;
   \]
since $\H^{0}(D,\cO_{D}) = R$ and $\H^{0}(D,\cO_{E}) = R^{2}$, we can split the sequence, and get a basis $x$, $y$, $z$ for $\H^{0}\bigl(D,\cO_{D}(E)\bigr)$ (here $z$ is supposed to be a generator of $\H^{0}(D,\cO_{D})$). After passing to a neighborhood of $s_{0}$ we can assume that $\cO_{\PP^{2}_{R}}(D)$ is isomorphic to $\cO_{\PP^{2}_{R}}(2)$. So $D$ is defined by a quadratic equation, which can not contain neither $x^{2}$ nor $y^{2}$, so it is of the form
   \[
   xy + (ax+by)z + c z^{2} = (x+bz)(y+az) + (c - ab)z^{2}\,.
   \]
Thus we have an isomorphism $D \simeq D_{R,f}$, where $f := c - ab \in R$. This isomorphism sends $E$ into $E_{R,f}$.

It is easy to check that $D_{R,f}$ has a unique involution that acts like the identity on $E$, and does not act trivially on any component of a geometric fiber; hence the isomorphism $D \simeq D_{R,f}$ is $\mmu_{2}$-equivariant. This finishes the proof.
\end{proof}

If $C \to S$ is a twisted conic, call $\Sigma_{C} \subseteq C$ the scheme-theoretic singular locus of the projection $C \to S$, defined, as usual, by the first Fitting ideal $\rF_{1}\Omega_{C/S} \subseteq \cO_{C}$ (since $C$ is a Deligne--Mumford stack, there is no difficulty in defining the sheaf of K\"{a}hler differentials $\Omega_{C/S}$). Formation of $\Sigma_{C}$ commutes with base change. The image of $\Sigma_{C}$ in $S$ is the set of points $s\in S$, such that the fiber of $C$ over $s$ is singular.

\begin{prop}\call{prop:local-description-Sigma}\hfil
\begin{enumerate1}

\itemref 1 If $R$ is a $k$-algebra and $f \in R$, then $\Sigma_{C_{R,f}} = \Phi_{R,f} \subseteq C_{R,f}$.

\itemref 2 The composite $\Sigma_{C} \subseteq C \to S$ is a gerbe banded by $\mmu_{2}$ over a closed subscheme $S_{1}$ of $S$. 

\end{enumerate1}
\end{prop}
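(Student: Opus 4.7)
The plan is to prove part (1) by a direct local computation, and then to deduce part (2) from (1) by applying the local presentation of twisted conics given by Proposition~\ref{prop:local-twisted-conic}.

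For (1), I would first check that the singular locus is concentrated in the stacky chart $C^{0}_{R,f}$, i.e.\ that $\overline{C}_{R,f}\smallsetminus \spec R/(f)$ is smooth over $\spec R$. The compactification $\overline{C}_{R,f}=\proj R[x,y,z]/(xy-f^{2}z^{2})$ is covered by the three standard charts $D_{+}(x)$, $D_{+}(y)$, $D_{+}(z)$; the first two are isomorphic to $\AA^{1}_{R}$ and hence smooth over $R$, and on $D_{+}(z)=\spec R[x,y]/(xy-f^{2})$ the Jacobian criterion locates the singular set at $V(f)$, so removing $\spec R/(f)$ leaves the smooth locus. Hence $\Sigma_{C_{R,f}}\subseteq C^{0}_{R,f}$. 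Next I would compute $\Sigma_{C^{0}_{R,f}}$: since $\cha k\neq 2$ the group scheme $\mmu_{2}$ is étale, so the quotient morphism $D^{0}_{R,f}\to C^{0}_{R,f}=[D^{0}_{R,f}/\mmu_{2}]$ is a $\mmu_{2}$-torsor, in particular étale and surjective. Consequently $\Omega_{C^{0}_{R,f}/\spec R}$ pulls back to $\Omega_{D^{0}_{R,f}/\spec R}$, and this pullback commutes with formation of Fitting ideals. On $D^{0}_{R,f}=\spec R[u,v]/(uv-f)$ the single relation $v\,du+u\,dv=0$ yields $F_{1}\Omega_{D^{0}_{R,f}/\spec R}=(u,v)$, so $\Sigma_{D^{0}_{R,f}}=V(u,v)=F_{R,f}$. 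Descending along the torsor gives $\Sigma_{C^{0}_{R,f}}=[F_{R,f}/\mmu_{2}]=\Phi_{R,f}$, proving (1).

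For (2), I would invoke Proposition~\ref{prop:local-twisted-conic} to obtain an étale cover $\{U_{\alpha}=\spec R_{\alpha}\to S\}$ such that $C\times_{S}U_{\alpha}\simeq C_{R_{\alpha},f_{\alpha}}$ for some $f_{\alpha}\in R_{\alpha}$. By (1) we have $\Sigma_{C}\rest{U_{\alpha}}=\Phi_{R_{\alpha},f_{\alpha}}=\spec R_{\alpha}/(f_{\alpha})\times\cB\mmu_{2}$, whose image in $U_{\alpha}$ is $\spec R_{\alpha}/(f_{\alpha})$. I would define $S_{1}\subseteq S$ as the scheme-theoretic image of the proper morphism $\Sigma_{C}\to S$; by flat descent, $S_{1}\times_{S}U_{\alpha}$ is identified with $\spec R_{\alpha}/(f_{\alpha})$, so locally the induced morphism $\Sigma_{C}\to S_{1}$ is the trivial $\mmu_{2}$-gerbe $\spec R_{\alpha}/(f_{\alpha})\times\cB\mmu_{2}\to \spec R_{\alpha}/(f_{\alpha})$. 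Consequently $\Sigma_{C}\to S_{1}$ is a gerbe, banded by $\mmu_{2}$ via the canonical identification of its inertia with $\mmu_{2}$ coming from the $2$-torsion stabilizer at the node of a twisted conic.

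The main obstacle is the step in (1) where the Fitting-ideal calculation is transferred from the scheme $D^{0}_{R,f}$ to the quotient stack $C^{0}_{R,f}$: this relies crucially on $\mmu_{2}$ being étale, and is the one place in the argument where the standing hypothesis $\cha k\neq 2$ is essential. Once (1) is in place, the globalization in (2) is essentially formal, since the local ideals $(f_{\alpha})$ glue by descent and the band $\mmu_{2}$ is canonical.
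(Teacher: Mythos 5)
Your argument is correct and follows the same route as the paper: part (1) is reduced, via the \'{e}tale $\mmu_{2}$-torsor $D^{0}_{R,f}\to C^{0}_{R,f}$ and compatibility of Fitting ideals with \'{e}tale pullback, to the computation $\rF_{1}\Omega_{D^{0}_{R,f}/R}=(u,v)$, and part (2) is deduced from Proposition~\ref{prop:local-twisted-conic} together with the identification $\Phi_{R,f}=\spec\bigl(R/(f)\bigr)\times\cB\mmu_{2}$. You simply make explicit some details the paper leaves implicit (smoothness of $\overline{C}_{R,f}\setminus\spec R/(f)$ over $\spec R$, and the descent of the ideals $(f_{\alpha})$ defining $S_{1}$), which is harmless.
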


\begin{proof}
Part \refpart{prop:local-description-Sigma}{1}: since the projection $D_{R,f} \arr C^{0}_{R,f}$ is \'{e}tale, this boils down to the statement that the subscheme of $D_{R,f}$ defined by the first Fitting is $F_{R,f}$, which is a simple calculation. Part~\refpart{prop:local-description-Sigma}{2} follows from Proposition~\ref{prop:local-twisted-conic}, and from the fact that $\Phi_{R,f}$ equals $\spec \bigl(R/(f)\bigr) \times \cB\mmu_{2}$.
\end{proof}

\begin{definition}\label{def:stack C}
	We will denote by $\Ccal \arr \Rcal$ the universal curve; in other words, an object of $\Ccal(S)$ is a pair $(C \to S, p)$, where $p\colon S \arr C$ is a section of $C \to S$. Since formation of $\Sigma_{C/S}$ for a twisted conic $C \to S$ commutes with base change, we also obtain a closed substack $\Sigma_{\Ccal} \subseteq \Ccal$, which is the singular locus of $\Ccal$. 
\end{definition}

Every smooth family is locally trivial, that is, locally a product with $\PP^{1}$, so $\Rcal_{0}$ is isomorphic to the classifying stack $\cB\PGL_{2}$. The restriction $\Ccal_{0}$ of $\Ccal$ to $\Rcal_{0}$ is the quotient $[\PP^{1}/\PGL_{2}]$.
The complement $\Rcal_{1}$ of $\Rcal_{0}$ in $\Rcal$ has a natural scheme structure.

\begin{prop}
The scheme-theoretic image $\Rcal_{1} \subseteq \Rcal_{0}$ of $\Sigma_{\Ccal} \subseteq \Ccal$ is a smooth effective divisor, whose complement is $\Rcal_{0}$. Furthermore, $\Sigma_{\Ccal} \arr \Rcal_{1}$ is a gerbe banded by $\mmu_{2}$.
\end{prop}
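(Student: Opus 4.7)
The plan is to check everything étale-locally using the explicit models $C_{R,f}$ of Proposition~\ref{prop:local-twisted-conic}. Pick a smooth atlas $X \arr \Rcal$ and pull back the universal curve; at any geometric point of $X$, Proposition~\ref{prop:local-twisted-conic} produces an étale neighborhood $\spec R \arr X$ over which the family is isomorphic to $C_{R,f}$ for some $f\in R$. The composition $\spec R \arr \Rcal$ is then smooth, and $R$ is smooth over $k$, so all local verifications can be carried out on $\spec R$.

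By Proposition~\ref{prop:local-description-Sigma}\refpart{prop:local-description-Sigma}{1}, the pullback of $\Sigma_{\Ccal}$ to this chart is $\Phi_{R,f} = \spec\bigl(R/(f)\bigr) \times \cB\mmu_{2}$. Since $\cha k \neq 2$, we have $\H^{0}(\cB\mmu_{2}, \cO) = k$, so the ideal of its scheme-theoretic image in $\spec R$ equals $(f)$; hence $\Rcal_{1}$ pulls back to $\spec\bigl(R/(f)\bigr)$. The topological space $|\Rcal|$ has two points with $\Rcal_{0}$ generic, so $\Rcal$ is irreducible and $\Rcal_{0}$ is dense; thus $f$ does not vanish on any component of $\spec R$, so is a non-zero-divisor, and $\Rcal_{1}$ is an effective Cartier divisor whose complement is $\Rcal_{0}$.

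The main obstacle is verifying smoothness of $\Rcal_{1}$, which reduces to showing that in the chart above, $f$ is part of a regular system of parameters at every point $s \in V(f)$, i.e.\ $f\notin \frm_{s}^{2}$. I plan to deduce this from deformation theory of the stacky node: the versal deformation of the singular twisted conic over a geometric point is the one-parameter family $uv = t$ (the moduli space is $xy = t^{2}$), so $\Ext^{1}(\Omega_{C_{s}}, \cO_{C_{s}})$ is one-dimensional, spanned by the smoothing class. Since $\spec R \arr \Rcal$ is smooth at $s$, the Kodaira--Spencer map of $C_{R,f}\arr \spec R$ must surject onto this tangent space; a direct inspection of the explicit model shows that the image is generated by the class of $f$ in $\frm_{s}/\frm_{s}^{2}$, so surjectivity forces $f\notin \frm_{s}^{2}$, hence $R/(f)$ is regular at $s$ and $\Rcal_{1}$ is smooth.

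Finally, for the gerbe statement, Proposition~\ref{prop:local-description-Sigma}\refpart{prop:local-description-Sigma}{2} applied to the family $C_{R,f}$ shows that $\Sigma_{\Ccal}|_{\spec R} \arr \spec R$ factors through $\spec\bigl(R/(f)\bigr)$ as the trivial $\mmu_{2}$-gerbe. Since being a $\mmu_{2}$-gerbe is a property local in the smooth topology on the target, this descends to the desired conclusion that $\Sigma_{\Ccal}\arr \Rcal_{1}$ is a gerbe banded by $\mmu_{2}$.
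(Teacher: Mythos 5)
Your proof is correct and follows the same route as the paper: the paper's one-line proof simply invokes Proposition~\ref{prop:local-description-Sigma}, and your argument is the expected unpacking of that via the local models $C_{R,f}$ of Proposition~\ref{prop:local-twisted-conic}. The one ingredient you supply explicitly --- that versality of $\spec R \arr \Rcal$ forces $f \notin \frm_{s}^{2}$, hence smoothness of the divisor --- is exactly the stacky analogue of the ``furthermore'' clause of Lemma~\ref{lem:local-around-node} proved in the appendix, so it matches how the paper justifies the corresponding smoothness claims elsewhere (e.g.\ for $\cP_{1} \subseteq \cP$ via Proposition~\ref{prop:lci}).
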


\begin{proof}
This follows immediately from Proposition~\ref{prop:local-description-Sigma}.
\end{proof}

Proposition~\ref{prop:local-description-Sigma} also allows to describe $\Rcal_{1}$ more explicitly. Notice that a morphism $S \arr \Rcal$ corresponding to a twisted conic $C \to S$ factors through $\Rcal_{1}$ if and only if $S_{1} = S$. Consider the twisted conic $C_{k,0}$ (see Notation \ref{notation:CRf}); then Proposition~\ref{prop:local-twisted-conic} implies that \'{e}tale-locally over $S$ the twisted conic $C$ is isomorphic to $S \times C_{k,0}$. So, in particular, $\Rcal_{1}$ is an \'{e}tale gerbe. Since $C_{k,0} \arr \spec k$ is in $\Rcal_{1}(\spec k)$ we have that it is a classifying stack $\cB\underaut_{k}C_{k,0}$.

\subsection{Invertible sheaves on twisted conics}\label{subsec:invertible-on-twisted}

Let $C \simeq C_{\Omega,0}$ be a twisted conic over an algebraically closed field $\Omega$, and let $L$ be a invertible sheaf on $C$. Then $L$ has a well-defined degree: see \cite[\S~7.2]{dan-tom-angelo2008} for a discussion.

If $C$ is singular, then it is the intersection of two copies of $\PP^{1}$, with a twisted structure of index~$2$ at the node; call $C_{1}$ and $C_{2}$ its irreducible components; these are copies of $\PP^{1}$ with a stacky point of index-$2$ ($1$-marked smooth twisted curves of genus~$0$, in the terminology of \cite{dan-olsson-vistoli-2}). An invertible sheaf on $C_{i}$ has a degree, which is a multiple of $1/2$. The degree $\deg L$ is the sum $\deg(L\rest{C_{1}}) + \deg(L\rest{C_{2}})$; the pair $\bigl(\deg(L\rest{C_{1}}), \deg(L\rest{C_{2}})\bigr)$ is the multidegree of $L$ (this is only well defined up to ordering). An invertible sheaf on $C$ is called \emph{balanced} if $\deg(L\rest{C_{1}}) = \deg(L\rest{C_{2}})$.

If $C \to S$ is a twisted conic and $L$ is an invertible sheaf on $C$, the degree of $L$ restricted to the geometric fibers of $C \to S$ is a locally constant function on $S$. The invertible sheaf $L$ is called \emph{balanced} if its restriction to any of the singular fibers is balanced.

Set $\Sigma = \Sigma_{C} \simeq \cB\mmu_{2}$. The inverse image $\Sigma_{i}$ of $\Sigma$ in  $C_{i}$ is isomorphic to $\Sigma$.

\begin{lm}\label{lem:integer-degrees}
The half-integers $\deg(L\rest{C_{1}})$ and $\deg(L\rest{C_{2}})$ are integral if an only if the restrictions of $L$ to $\Sigma$ is isomorphic to $\cO_{\Sigma}$.
\end{lm}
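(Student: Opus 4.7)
The strategy is to reduce the question to a direct computation on each irreducible component of $C$.

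The plan has three ingredients. First, I would recall that each component $C_i$, being a smooth twisted curve of genus zero with a single stacky point of index~$2$ (namely $\Sigma_i$), is isomorphic to the weighted projective line $\PP(1,2)$, or equivalently to the order-$2$ root stack of $\PP^1$ along the coarse image $q_i$ of the node. From either presentation one sees that $\pic(C_i)$ is freely generated by an invertible sheaf $\cO_{C_i}(\tfrac{1}{2})$ of degree $1/2$, and that the restriction of this generator to the stacky locus $\Sigma_i \simeq \cB\mmu_2$ is the nontrivial character of $\mmu_2$. Consequently, an invertible sheaf $M$ on $C_i$ of degree $d \in \tfrac{1}{2}\ZZ$ restricts to $\chi^{2d}$ on $\Sigma_i$, where $\chi$ denotes the sign character, and this restriction is trivial exactly when $d \in \ZZ$.

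Second, I would use that the inclusions $\Sigma_i \subseteq C_i \subseteq C$ all factor through the common substack $\Sigma \subseteq C$, so that both restrictions $L\rest{C_i}\rest{\Sigma_i}$ are canonically identified with $L\rest\Sigma$. Combining the two ingredients, $L\rest\Sigma \simeq \cO_\Sigma$ is equivalent to the simultaneous triviality of these two restrictions, which in turn is equivalent to both $\deg(L\rest{C_1})$ and $\deg(L\rest{C_2})$ being integers. A pleasant byproduct is that the two integrality conditions are always simultaneously satisfied or simultaneously violated.

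The only nontrivial step is the computation of $\pic(C_i)$ together with the identification of the restriction map $\pic(C_i) \arr \pic(\cB\mmu_2)$; this is where I expect the main (minor) obstacle to lie. If one wishes to avoid quoting standard facts about the Picard group of a weighted projective line, one can instead work locally: Proposition~\ref{prop:local-twisted-conic} reduces to $C = C_{\Omega,0}$, and the $\mmu_2$-quotient presentation $D^{0}_{\Omega,0} \arr C^{0}_{\Omega,0}$ used in Subsection~\ref{subsec:local-structure} trivializes every invertible sheaf near the node in a $\mmu_2$-equivariant way, so that $L\rest\Sigma$ can be read off as the $\mmu_2$-weight on the stalk at the origin of $D^{0}_{\Omega,0}$; a short calculation then matches the parity of this weight with that of $2\deg(L\rest{C_i})$ for each~$i$.
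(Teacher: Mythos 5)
Your proof is correct and takes essentially the same route as the paper: the paper's own proof consists of reducing the lemma to the statement that an invertible sheaf on $C_i$ has integer degree if and only if its restriction to $\Sigma_i$ is trivial, which it declares straightforward, and your identification of $C_i$ with $\PP(1,2)$ (equivalently, the order-$2$ root stack of $\PP^1$ at $q_i$) together with the computation of $\pic(C_i)\arr\pic(\cB\mmu_2)$ supplies precisely the verification that is left implicit there. The canonical identification of both restrictions $L\rest{C_i}\rest{\Sigma_i}$ with $L\rest{\Sigma}$ is likewise the intended reduction, so nothing is missing.
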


\begin{proof}
This reduces to the statement that in invertible sheaf on $C_{i}$ has integer degree if and only if its restriction to $\Sigma_{i}$ is trivial, which is straightforward.
\end{proof}

\begin{prop}\label{prop:locally-isomorphic-balanced}
Let $\phi\colon C \arr S$ be a twisted conic, $L_{1}$ and $L_{2}$ balanced invertible sheaves on $C$ of the same degree~$d$. Then there exists an \'{e}tale covering $S' \arr S$ such that the pullbacks of $L_{1}$ and $L_{2}$ to $C_{S'}$ are isomorphic.
\end{prop}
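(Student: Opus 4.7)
Set $M \eqdef L_{1}^{\vee} \otimes L_{2}$; the plan is to show that $M$ is trivial after an étale (in fact Zariski) base change on $S$, from which the conclusion $L_{1} \simeq L_{2}$ follows.

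\textbf{Step 1: fiberwise triviality.} First I would show that $M_{s} \simeq \cO_{C_{s}}$ for every geometric point $s$ of $S$. If $C_{s}$ is smooth, it is isomorphic to $\PP^{1}$ and the claim is classical. If $C_{s}$ is singular, write $C_{s} = C'_{1} \cup C'_{2}$ with the stacky node $\Sigma_{s} \simeq \cB\mmu_{2}$. Since both $L_{1}$ and $L_{2}$ are balanced of degree $d$, their restrictions to each $C'_{i}$ have common degree $d/2$, and on such a twisted $\PP^{1}$ with a single stacky point of index~$2$ an invertible sheaf is determined up to isomorphism by its degree. Hence $M\rest{C'_{i}}$ is trivial on each component. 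By Lemma~\ref{lem:integer-degrees}, the restrictions $L_{1}\rest{\Sigma_{s}}$ and $L_{2}\rest{\Sigma_{s}}$ are the same character of $\mmu_{2}$ (trivial if $d$ is even, the sign character if $d$ is odd), so $M\rest{\Sigma_{s}}$ is trivial as a $\mmu_{2}$-representation. Thus $M_{s}$ is obtained by gluing two trivial line bundles along an automorphism of $\cO_{\Sigma_{s}}$, and such gluing data can be normalized to the identity, giving $M_{s} \simeq \cO_{C_{s}}$.

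\textbf{Step 2: cohomology and base change.} Since each fiber $M_{s}$ is trivial and $C_{s}$ is a genus zero (twisted) curve, one has $h^{0}(C_{s}, M_{s}) = 1$ and $h^{1}(C_{s}, M_{s}) = 0$ for every $s$. By the theorem on cohomology and base change (applicable in the tame Deligne--Mumford setting), $\phi_{*}M$ is an invertible sheaf on $S$ whose formation commutes with arbitrary base change, and the canonical evaluation morphism
\[
\phi^{*}\phi_{*}M \arr M
\]
restricts on each fiber to the evaluation map for the trivial line bundle, which is surjective. Being a surjection between invertible sheaves, $\phi^{*}\phi_{*}M \arr M$ is an isomorphism.

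\textbf{Step 3: conclusion.} The line bundle $\phi_{*}M$ on $S$ is Zariski-locally trivial. Picking any Zariski cover $S' \arr S$ on which $\phi_{*}M$ is trivial (in particular an étale cover) and pulling back we obtain $M_{S'} \simeq \phi^{*}\phi_{*}M \rest{S'} \simeq \cO_{C_{S'}}$, which is the desired isomorphism $L_{1}\rest{C_{S'}} \simeq L_{2}\rest{C_{S'}}$.

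The only delicate point is Step~1 in the singular case, where one has to combine the description of the Picard group of each twisted component of $C_{s}$ with the compatibility of the two sides at the gerbe $\Sigma_{s}$; the balanced hypothesis is used precisely to guarantee that the two characters at $\Sigma_{s}$ agree, after which gluing is unobstructed. The cohomology-and-base-change argument of Step~2 is then a standard mechanism to promote a fiberwise isomorphism to a global one.
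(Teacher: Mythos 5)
Your proof is correct and follows essentially the same route as the paper: reduce to showing $M = L_{1}^{\vee}\otimes L_{2}$ is fiberwise trivial, then apply cohomology and base change to get $M \simeq \phi^{*}\phi_{*}M$ with $\phi_{*}M$ invertible. The only (harmless) difference is in the fiberwise step: you glue trivializations on the two twisted components across the gerbe $\Sigma_{s}$, while the paper pushes $M$ forward to the coarse space $\overline{C}$ and uses that a bidegree-$(0,0)$ line bundle on a nodal conic is trivial.
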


\begin{proof}
Consider the invertible sheaf $M = L_{1}\otimes L_{2}^{\vee}$, which is balanced of degree~$0$.

Assume that $S$ is the spectrum of an algebraically closed field. Then from Lemma~\ref{lem:integer-degrees} we see that $M$ is trivial along the gerbe $\Sigma \subseteq C$. If $\pi\colon C \arr \overline{C}$ is the moduli space, it is a standard fact that $\pi_{*}M$ is an invertible sheaf on $\overline{C}$, and $M = \pi^{*}\pi_{*}M$; hence it is enough to show that $\pi_{*}M$ is trivial on $\overline{C}$. But $\pi_{*}M$ has degree~$0$ on $\overline{C}$, and bidegree~$(0,0)$ on the singular fibers, so this is true.

Also, $\H^{1}(C, M) = \H^{1}(\overline{C}, \pi_{*}M) = 0$; so we can apply the cohomology and base change theorem for algebraic stacks \cite{hall-base-change} and deduce that $\phi_{*}M$ is an invertible sheaf on $S$, and $M = \phi^{*}\phi_{*}M$. The result follows immediately.
\end{proof}

\subsection{Polarized twisted conics}

\begin{df}
A \emph{polarization} on a twisted conic $C \to S$ is a balanced invertible sheaf on $C$ of constant degree~$1$.

A \emph{polarized twisted conic} over a scheme $S$ is a pair $(C \to S, L)$, where $C \to S$ is a twisted conic and $L$ is a polarization on $C \to S$.
\end{df}

Locally, every twisted conic has a polarization.

\begin{prop}\label{prop:locally-polarized}
Let $C \arr S$ be a twisted conic. Then there exists an \'{e}tale covering $S' \arr S$ such that the pullback $C_{S'} \arr S'$ has a polarization.
\end{prop}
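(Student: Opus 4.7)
The plan is to combine a deformation-theoretic argument, in the spirit of the proof of Proposition \ref{prop:local-twisted-conic}, with a direct construction of a polarization on a geometric fiber. Fix a geometric point $s_{0}\colon \spec\Omega\to S$; it suffices to produce a polarization on some étale neighborhood of $s_{0}$, since patching such neighborhoods yields the required étale cover of $S$.

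First I construct a polarization $L_{s_{0}}$ on the fiber $C_{s_{0}}$. If this fiber is smooth, then $C_{s_{0}}\simeq\PP^{1}_{\Omega}$ and $L_{s_{0}}=\cO(1)$ suffices. If it is singular, by Proposition \ref{prop:local-twisted-conic} we may assume $C_{s_{0}}\simeq C_{\Omega,0}$, which is a union $C_{1}\cup_{\Sigma}C_{2}$ of two footballs (each a root stack $\sqrt[2]{(\PP^{1},p)}$) glued along the gerbe $\Sigma\simeq\cB\mmu_{2}$. Each $\pic(C_{i})$ is generated by a line bundle $M_{i}$ of degree~$1/2$, and by Lemma \ref{lem:integer-degrees} the restriction $M_{i}|_{\Sigma}$ is the nontrivial element of $\pic(\Sigma)=\ZZ/2$. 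Hence $M_{1}|_{\Sigma}\simeq M_{2}|_{\Sigma}$, and fixing such an isomorphism glues $M_{1}$ and $M_{2}$ to a line bundle $L_{s_{0}}$ on $C_{s_{0}}$ of bidegree~$(1/2,1/2)$, balanced of total degree~$1$.

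Next I extend $L_{s_{0}}$ to an étale neighborhood of $s_{0}$. Since $C_{s_{0}}$ has arithmetic genus zero and dimension one, $\H^{1}(C_{s_{0}},\cO)=\H^{2}(C_{s_{0}},\cO)=0$, so $L_{s_{0}}$ lifts uniquely and without obstruction to every infinitesimal thickening of $C_{s_{0}}$ in $C$. By Grothendieck's existence theorem for proper Deligne--Mumford stacks \cite[Proposition~2.1]{olsson-starr-quot} these lifts assemble to a line bundle on $C\times_{S}\spec\widehat{\cO}_{S,s_{0}}$, and Artin approximation produces a line bundle $L$ on $C\times_{S}S'$ for some étale neighborhood $S'\to S$ of $s_{0}$ restricting to $L_{s_{0}}$ on the central fiber. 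The conditions of being balanced and of having total degree~$1$ are open on $S'$: the total degree is locally constant on fibers, and when $s_{0}$ is singular the bidegree on the singular fibers is locally constant on the closed substack parametrizing them, while when $s_{0}$ is smooth every nearby fiber is smooth so balancedness is vacuous there. Both properties therefore propagate from $s_{0}$ to a neighborhood, and after shrinking $S'$ the line bundle $L$ is a polarization.

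The main obstacle is the deformation-and-approximation step, but it runs in direct parallel with the argument in the proof of Proposition \ref{prop:local-twisted-conic} and requires no new ingredients beyond the genus-zero vanishings.
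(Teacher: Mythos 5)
Your proof is correct, but it takes a genuinely different route from the paper. The paper simply invokes Proposition~\ref{prop:local-twisted-conic} to reduce to the explicit local model $C = C_{R,f}$, where the flat double cover $\pi\colon D_{R,f} \arr C_{R,f}$ ramified along $E_{R,f}$ is already available, and sets $L = M^{\vee}$ with $M \subseteq \pi_{*}\cO_{D_{R,f}}$ the trace-zero subsheaf; since $L^{\otimes 2}$ acquires a section vanishing exactly on $E_{R,f}$, which meets each singular fiber in one point per component, the degree and bidegree are read off at once. You instead build the polarization only on the geometric fiber (by gluing the degree-$\tfrac12$ generators of $\pic(C_{i})$ along the gerbe, which is exactly the bundle the paper constructs later in \S\ref{subsec:automorphism-polarized}) and then spread it out by unobstructed deformation, Grothendieck existence and Artin approximation, plus an openness argument for the balancing and degree conditions; all of these steps check out ($\H^{1}=\H^{2}=0$ on a genus-zero fiber, local constancy of the unordered multidegree on the locus of singular fibers). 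What the paper's argument buys is brevity: the double cover from Proposition~\ref{prop:local-twisted-conic} does all the work and no further formal/approximation machinery is needed. What yours buys is that it only uses the isomorphism type of the closed fiber, not the full \'etale-local model over the base, and it generalizes to any situation with vanishing obstruction groups. Two small points you should make explicit: first, before invoking Grothendieck existence and Artin approximation you must reduce to $S$ of finite type over $k$ (exactly as in the paper's proof of Proposition~\ref{prop:local-twisted-conic}, using that $\Rcal$ is of finite type); second, the multidegree is only well defined as an unordered pair because of possible monodromy exchanging the branches, but balancedness depends only on the unordered pair, so your openness argument is unaffected.
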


\begin{proof}
By Proposition~\ref{prop:local-twisted-conic} we can assume that $S = \spec R$, $f \in R$, and $C = C_{R,f}$. We have seen that there exists a flat double cover $\pi\colon D_{R,f} \arr C_{R,f}$ that is \'{e}tale outside of the union $E_{R,f}\subseteq C_{R,f}$ of two disjoint sections $\spec R \arr C_{R,f}$, landing outside the singular locus of $C_{R,f}$. Also, $E_{R,f}$ meets each singular fibers in two point lying in different components. Furthermore, $\pi$ is ramified of order~$2$ along $E_{R,f}$.

We have a decomposition $\pi_{*}\cO_{D_{R,f}} = \cO_{C_{R,f}} \oplus M$, where $M \subseteq \pi_{*}\cO_{D_{R,f}}$ is the subsheaf of trace~$0$ elements. From the standard theory of double covers we know that $M$ is an invertible sheaf; if $L = M^{\vee}$, then $L^{\otimes 2}$ has a section vanishing exactly on $E_{R,f}$. It follows that the restriction of $L^{\otimes 2}$ to the fibers of $C \arr S$ has degree $2$, and bidegree $(1,1)$ in the singular fibers. So $L$ is a polarization.
\end{proof}

\begin{df}\label{def:stack P}
	We will call $\cP$ the stack of polarized twisted conics. If $S$ is a scheme, an object of $\cP(S)$ is a polarized twisted conic on $S$; a morphism from $(C' \arr S', L')$ to $(C \to S, L)$ is a pair consisting of a morphism in $\Rcal$
	\[
	\begin{tikzcd}
	C' \dar\rar{R,f} &C\dar\\
	S' \rar & S
	\end{tikzcd}
	\]
	and an isomorphism $L' \simeq f^{*}L$ of sheaves of $\cO_{C'}$-modules. We will call $\cP_{0}$ the inverse image of $\Rcal_{0}$, and $\cP_{1}$ the inverse image of $\Rcal_{1}$.
\end{df}
There is an obvious morphism $\cP \arr \Rcal$, sending $(C \to S, L)$ into $C \to S$.

If $\cY \arr \cX$ is a morphism of \'{e}tale stacks over $\aff$, we say that $\cY$ is a $\gm$-gerbe over $\cX$ if the following conditions hold. 
\begin{enumerate1}
\item For every affine scheme $S$ and every morphism $S \arr \cX$, the projection $S \times_{\cX}\cY \arr S$ has the structure of a gerbe banded by $\gm$.

\item If $S' \arr S$ is a morphism of schemes, the induced morphism $S' \times_{\cX}\cY \arr S\times_{\cX}\cY$ is a morphism of gerbes banded by $\gm$.
\end{enumerate1}

More explicitly, this translates into the following conditions.

\begin{enumeratea}\call{conditions-gm-gerbe}

\itemref1 If $\eta$ is an object of $\cY$ over an affine scheme $S$, and $\xi$ is its image in $\cX$, we have a given isomorphism of \'{e}tale sheaves of groups $\gmp{S} \simeq \ker\bigl(\underaut_{S}\eta \arr \underaut_{S}X\bigr)$. These are required to be compatible with base change under morphisms $S' \arr S$

\itemref2 If $\xi$ is an object of $\cX(S)$ for some affine scheme $S$, there exists an \'{e}tale covering $S' \arr S$ such that the pullback $\xi_{S'}$ has a lifting to $\cY(S')$.

\itemref3 If $\xi$ is an object of $\cX(S)$ for some affine scheme $S$, and $\eta_{1}$ and $\eta_{2}$ are liftings of $\xi$ to $\cY(S)$, then after passing to an \'{e}tale covering of $S$, the two lifting are isomorphic.

\end{enumeratea}

If $\cY \arr \cX$ is a $\gm$-gerbe and $\cX$ is an algebraic stack, it is easy to see that $\cY$ is also an algebraic stack, and $\cY \arr \cX$ is smooth.

\begin{prop}
The morphism $\cP \arr \Rcal$ makes $\cP$ into a $\gm$-gerbe over $\Rcal$.
\end{prop}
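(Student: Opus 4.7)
The plan is to verify the three explicit conditions \refpart{conditions-gm-gerbe}{1}, \refpart{conditions-gm-gerbe}{2}, \refpart{conditions-gm-gerbe}{3} for the forgetful morphism $\cP \arr \Rcal$ sending $(C \to S, L)$ to $C \to S$, and then invoke the remark that a $\gm$-gerbe over an algebraic stack is automatically algebraic and smooth, which is what one ultimately wants.

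First I would check \refpart{conditions-gm-gerbe}{1}. Given a polarized twisted conic $(C \to S, L)$ lying over the twisted conic $C \to S$, an element of $\ker\bigl(\underaut_{S}(C,L)\arr\underaut_{S}C\bigr)$ is, by definition, an automorphism of the $\cO_{C}$-module $L$ (over the identity of $C$), that is, a global section of $\cO_{C}^{*}$. By the cohomology and base change theorem for tame Deligne--Mumford stacks (as used in Proposition~\ref{prop:locally-isomorphic-balanced}), the formation of $\phi_{*}\cO_{C}$ commutes with base change, and on geometric fibers one gets $\cO$ of a geometrically connected reduced curve, hence $\phi_{*}\cO_{C} = \cO_{S}$; so the units of $\H^{0}(C,\cO_{C})$ are exactly $\H^{0}(S,\cO_{S})^{*}$, giving a canonical isomorphism with $\gmp{S}$. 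Base change compatibility is automatic from the base change property of $\phi_{*}\cO_{C}$.

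For \refpart{conditions-gm-gerbe}{2} I would simply invoke Proposition~\ref{prop:locally-polarized}: every twisted conic admits a polarization \'etale-locally on the base. For \refpart{conditions-gm-gerbe}{3} I would invoke Proposition~\ref{prop:locally-isomorphic-balanced}: any two polarizations on the same twisted conic, being balanced invertible sheaves of the same degree $1$, are \'etale-locally isomorphic.

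Having verified the three conditions, $\cP \arr \Rcal$ is a $\gm$-gerbe; combined with Proposition~\ref{prop:R algebraic}, this yields in particular that $\cP$ is algebraic, smooth and of finite type over $k$ as a byproduct. The only real substantive point in the argument is \refpart{conditions-gm-gerbe}{1}, and even that reduces to the standard identification $\phi_{*}\cO_{C} = \cO_{S}$; conditions \refpart{conditions-gm-gerbe}{2} and \refpart{conditions-gm-gerbe}{3} are exactly what Propositions~\ref{prop:locally-polarized} and~\ref{prop:locally-isomorphic-balanced} were set up to provide, so no genuinely new work is required here.
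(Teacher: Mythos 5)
Your proof is correct and follows essentially the same route as the paper: condition (a) via the identification $\cO^{*}(S)\simeq\cO^{*}(C)$, condition (b) via Proposition~\ref{prop:locally-polarized}, and condition (c) via Proposition~\ref{prop:locally-isomorphic-balanced}. The only difference is that you spell out the base-change justification for $\phi_{*}\cO_{C}=\cO_{S}$, which the paper leaves implicit.
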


\begin{proof}
We need to check conditions \refpart{conditions-gm-gerbe}{1}, \refpart{conditions-gm-gerbe}{2} and \refpart{conditions-gm-gerbe}{3} above.

Part~\refpart{conditions-gm-gerbe}{1} follows immediately from the fact that if $C \to S$ is a twisted conic, the natural map $\cO^{*}(S) \arr \cO^{*}(C)$ is an isomorphism. Part~\refpart{conditions-gm-gerbe}{3} follows from Proposition~\ref{prop:locally-isomorphic-balanced}, and \refpart{conditions-gm-gerbe}{2} is the content of Proposition~\ref{prop:locally-polarized}. \end{proof}
From this it follows that the stack $\cP$ is algebraic, and smooth over $k$.
Since both $\Rcal_{0}$ and $\Rcal_{1}$ are gerbes, it follows that $\cP_{0}$ and $\cP_{1}$ are also gerbes. In particular, $\cP$ is of finite type. 

The structure of $\cP_{0}$ is easy to determine: $\bigl(\PP^{1}, \cO(1)\bigr)$ is an object of $\cP_{0}(k)$, hence $\cP_{0}$ is the classifying stack $\cB\underaut_{k}\bigl(\PP^{1}, \cO(1)\bigr)$ of automorphism group scheme of the pair $(\PP^{1}, \cO(1))$; but it is well known that $\underaut_{k}\bigl(\PP^{1}, \cO(1)\bigr) = \GL_{2}$, so $\cP_{0} = \cB\GL_{2}$.  So we proved the following.

\begin{prop}\label{prop:Lcalzero iso BGLtwo}
The stack $\cP$ is algebraic, and smooth over $k$. The open substack $\cP_{0}$ and the closed substack $\cP_{1}$ are algebraic gerbes over $k$. Moreover we have $\Pcal_0\simeq \cB\GL_2$.
\end{prop}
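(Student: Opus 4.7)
The plan is to assemble the proposition directly from the structural results already established, rather than to do any new geometry. Essentially everything needed sits in the preceding paragraphs; what remains is to spell out how the three claims follow.

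First, algebraicity and smoothness of $\cP$. The morphism $\cP \arr \Rcal$ has just been shown to be a $\gm$-gerbe, and we noted that a $\gm$-gerbe over an algebraic stack is algebraic and smooth over the base. Since $\Rcal$ is algebraic, of finite type and smooth over $k$ by Proposition~\ref{prop:R algebraic}, the composition gives that $\cP$ is algebraic and smooth over $k$. Finite type is inherited from $\Rcal$ together with the fact that $\cP_0$ and $\cP_1$ are gerbes (so locally finite type) covering all of $\cP$.

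Next, the gerbe statements for $\cP_0$ and $\cP_1$. We already know $\Rcal_0$ is a gerbe over $k$: it is isomorphic to $\cB\PGL_2$ by local triviality of smooth conics of genus $0$. Similarly, $\Rcal_1$ is an \'etale gerbe over $k$, identified with $\cB\underaut_k C_{k,0}$ thanks to the local description given by Proposition~\ref{prop:local-twisted-conic} specialized along the singular stratum. Pulling back the $\gm$-gerbe $\cP \arr \Rcal$ to $\Rcal_0$ and $\Rcal_1$ gives $\gm$-gerbes $\cP_0 \arr \Rcal_0$ and $\cP_1 \arr \Rcal_1$. A gerbe over a gerbe over $\spec k$ is again a gerbe over $\spec k$, since both conditions (that every geometric point has a lift, and that any two objects over a geometric point are locally isomorphic) propagate through the composition. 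Thus $\cP_0$ and $\cP_1$ are algebraic gerbes over $k$.

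Finally, the identification $\cP_0 \simeq \cB\GL_2$. The pair $(\PP^1, \cO(1))$ is a tautological object of $\cP_0(k)$; since any polarized twisted conic over a geometric point of $\cP_0$ is just a smooth conic with a degree $1$ invertible sheaf, and such a pair is isomorphic (after passing to an extension) to $(\PP^1, \cO(1))$, the object $(\PP^1, \cO(1))$ witnesses $\cP_0$ as the neutral gerbe $\cB\underaut_k(\PP^1, \cO(1))$. The automorphism group scheme of $(\PP^1, \cO(1))$ is classically $\GL_2$, realized by its standard action on $\cO(1) = \cO_{\PP^1}(1)$ with kernel the center acting trivially on $\PP^1$ but nontrivially on sections. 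Hence $\cP_0 \simeq \cB\GL_2$.

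The only genuinely delicate step is the identification of $\cP_0$ as a neutral $\GL_2$-gerbe rather than merely a $\PGL_2$-gerbe twisted by a $\gm$-gerbe; the key point is that the global section $(\PP^1, \cO(1))$ of $\cP_0$ over $\spec k$ neutralizes the gerbe, so the extension $1 \to \gm \to \GL_2 \to \PGL_2 \to 1$ recovers the full automorphism group, matching the composite $\gm$-gerbe $\cP_0 \arr \Rcal_0 \simeq \cB\PGL_2$ with $\cB\GL_2 \arr \cB\PGL_2$.
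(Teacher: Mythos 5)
Your proposal is correct and follows essentially the same route as the paper: algebraicity and smoothness of $\cP$ are deduced from the $\gm$-gerbe structure over the algebraic, smooth stack $\Rcal$; the gerbe statements for $\cP_0$ and $\cP_1$ come from the fact that $\Rcal_0$ and $\Rcal_1$ are gerbes; and $\cP_0\simeq\cB\GL_2$ is obtained by neutralizing the gerbe $\cP_0$ with the $k$-point $(\PP^1,\cO(1))$, whose automorphism group scheme is $\GL_2$. Your closing remark about matching the extension $1\to\gm\to\GL_2\to\PGL_2\to 1$ with the composite $\cP_0\to\Rcal_0\simeq\cB\PGL_2$ is a helpful consistency check but is not needed for, and does not change, the argument.
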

In \ref{subsec:automorphism-polarized} we will describe $\cP_{1}$; this is quite a bit more elaborate.

\subsection{Automorphism groups of polarized twisted conics}\label{subsec:automorphism-polarized}

First of all, set $C = C_{k,0}$, $D = D_{k,0}$, $E = E_{k,0}$ (see Notation \ref{notation:CRf} and (\ref{eq:DRf})). That is, the curve $D$ is a union of two copies of $\PP^1$, glued in $0$. The divisor $E$ is the divisor that on each component is given by the point at infinity, and $C$ is the twisted conic obtained from $[D/\mmu_2]$ after rigidifying the two points at the infinity.

We can view $E$ as a subscheme of $D$, of $C$, or of the moduli space $\overline{C}$ of $C$. Then $\Sigma = \Sigma_{C} \subseteq C$ is the gerbe at the node, $C^{0} \eqdef C \setminus E$, $D^{0} \eqdef D \setminus E$, $C^{0} \eqdef C \setminus E$. 
The group $\mmu_{2}$ acts on $D$, leaving $E$ fixed; the map $D \arr C$ ramified of order $2$ along $E$, and \'{e}tale elsewhere.

Let us start by determining the structure of the automorphism group scheme $\underaut_{k}C$. This is a little subtle; an automorphism of $C$ is not determined by its restriction to $C \setminus \Sigma$. In other words, if $\overline{C}$ is the moduli space of $C$, the natural homomorphism $\underaut_{k}C \arr \underaut_{k}\overline{C}$ is not injective. It is surjective, but its kernel is isomorphic to $\mmu_{2}$ (see \cite[\S 7.1]{dan-alessio-vistoli03}); the non-trivial automorphism in the kernel is known as a \emph{ghost automorphism}.

The group scheme $\underaut_{k}C$ has a subgroup $\underaut_{k}(C,E)$ consisting of automorphisms sending $E$ to itself. Let us start by working out the structure of $\underaut_{k}(C,E)$. First of all, notice that the connected component of the identity of $\underaut_{k}(D,E)$ is isomorphic to $\gm^{2}$; the action of $\gm^{2}$ on $D$ is determined by fixing an ordering $D_{1}$, $D_{2}$ of the two irreducible components of $D$, as follows.

\begin{enumerateitem}

\item The first component $\gm \subseteq \gm^{2}$ acts on $D_{1}$, fixing the points $D_{1} \cap D_{2}$ and $D_{1}\cap E$, in such a way that the differential of the action at the point $D_{1}\cap E$ is given by multiplication on the tangent space of $D_{1}$ at $D_{1}\cap E$.

\item The action of the first component of $\gm^{2}$ is trivial on $D_{2}$. 

\item The same holds for the second component of $\gm^{2}$, exchanging $D_{1}$ and $D_{2}$.

\end{enumerateitem}

Choose an isomorphism $\phi\colon D_{1} \simeq D_{2}$ that sends $D_{1} \cap D_{2}$ to itself and exchanges $D_{1} \cap E$ with $D_{2} \cap E$. From $\phi$ one gets an involution of $(D,E)$, which switches $D_{1}$ and $D_{2}$, and generates a cyclic subgroup $\rC_{2} \subseteq \underaut_{k}(D,E)$. One checks easily that $\underaut_{k}(D,E)$ is a semidirect product $\gm^{2}\rtimes \rC_{2}$, where the generator of $\rC_{2}$ switches the two copies of $\gm$.

Consider the subgroup $\mmu_{2}^{2} \subseteq \gm^{2} \subseteq \underaut_{k}(D,E)$, and the diagonal embedding $\mmu_{2} \subseteq \mmu_{2}^{2} \subseteq \underaut_{k}(D,E)$; this embeds $\mmu_{2}$ into $\underaut_{k}(D,E)$ as a characteristic subgroup; the subgroup $\underaut_{k}(D,E) \subseteq \underaut_{k}D$ is the normalizer of $\mmu_{2}$. The morphism $D \arr C$, which is ramified of order $2$ along $E$, is $\mmu_{2}$-invariant, therefore it induces a morphism $[D/\mmu_{2}] \arr C$; this is an isomorphism outside of $E$. The stack quotient $[D/\mmu_{2}]$ is a twisted curve $[D/\mmu_{2}] \arr C$, with a gerbe of index $2$ at that node, and two markings of index $2$, whose union is $[E/\mmu_{2}] = E \times\cB_{k}\mmu_{2}$. The map induces an isomorphism $\underaut_{k}\bigl([D/\mmu_{2}], [E/\mmu_{2}]\bigr) \simeq \underaut_{k}(C,E)$. On the other hand $\mmu_{2} \subseteq \underaut_{k}(D,E)$ is a central subgroup, hence there exists a induced homomorphism
   \[
   \underaut_{k}(D,E) \arr \underaut_{k}\bigl([D/\mmu_{2}], [E/\mmu_{2}]\bigr) = \underaut_{k}(C,E)\,.
   \]
Furthermore, $[D/\mmu_{2}]$ has a unique connected \'{e}tale cover of degree $2$, that is $D \arr [D/\mmu_{2}]$; this implies that the homomorphism above is surjective, with kernel $\mmu_{2}$.

We claim that $\underaut_{k}C$ is a semidirect product $\ga^{2} \rtimes \underaut_{k}(C,E)$ for an action of $\underaut_{k}(C,E)$ on $\ga^{2}$, which we will not need to write down. For this it is enough to produce an embedding $\ga^{2} \subseteq \underaut_{k}C$ as a normal subgroup, with the property that for each affine scheme $S$ every element of $\underaut_{k}C(S)$ can be written in a unique was as $\alpha\beta$, where $\alpha \in \ga^{2}(S)$ and $\beta \in \underaut_{k}(C,E)(S)$.

First of all, if $\overline{C}_{1}$ and $\overline{C}_{2}$ are the irreducible components of $\overline{C}$, fix isomorphisms $\overline{C}_{i} \simeq \PP^{1}$, in such a way that the point of intersection of the two components corresponds to the point at infinity. This induces actions of $\ga$ on $\overline{C}_{1}$ and $\overline{C}_{2}$ by translation, which give an action of $\ga^{2}$ on $\overline{C}$. This identifies $\ga^{2}$ with a normal subgroup of $\underaut_{k}\overline{C}$.

Now consider the inverse image of $\ga^{2}$ in $\underaut_{k}C$; since for every extension
   \[
   1 \arr \mmu_{2} \arr E \arr \ga^{2} \arr 1
   \]
there is a unique splitting $\ga^{2} \arr E$, we get a unique lifting of $\ga^{2}\subseteq \underaut_{k}\overline{C}$ to a normal subgroup $\ga^{2} \subseteq \underaut_{k}C$. 

If $S$ is an affine scheme and $\phi$ is an automorphism of $C\times S$ over $S$, then it is easy to see that there exists a unique element $\alpha$ of $\ga(S)$ carrying $E \times S$ onto $\alpha(E \times S)$; this implies what we want, that is, that $\underaut_{k}C$ is semidirect product $\ga^{2} \rtimes \underaut_{k}(C,E) = \ga^{2} \rtimes\bigl( (\gm^{2}/\mmu_{2})\rtimes\rC_{2}\bigr)$. Of course it is possible to write the action of $\gm^{2}/\mmu_{2}$ on $\ga^{2}$ explicitly, but this will not be necessary for what follows. Thus, $\Rcal_{1}$ is described as the classifying stack of the group $\ga^{2} \rtimes\bigl( (\gm^{2}/\mmu_{2})\rtimes\rC_{2}\bigr)$.

Now let us produce an object in $\cP_{1}$ over $\spec k$. Call $q$ the point $D_{1} \cap D_{2}$; let $M$ be the invertible sheaf on $D$ obtained by gluing at $q$ the sheaves $\cO_{D_{1}}(q)$ and $\cO_{D_{2}}(q)$ along the fiber on $q$, using the differential of $\phi\colon D_{1} \simeq D_{2}$ at the origin. Then $M$ has degree $1$ on each component. The action of the group $\mmu_{2}$ on $D$ lifts to an action on $M$; the action of the generator of $\mmu_{2}$ on the fiber over $q$ is by changing sign. Hence $M$ descends to an invertible sheaf $\overline{M}$ on $[D/\mmu_{2}]$. Since the action of $\mmu_{2}$ on the fibers of $M$ over the two points of $E$ is trivial, the pushforward of $\overline{M}$ to $C$ is an invertible sheaf $L$ that pulls back to $\overline{M}$. This invertible sheaf $L$ has degree $1/2$ on each components, so $(C,L)$ is an object of $\cP_{1}(k)$. 

The automorphism group of $L$ as a sheaf of $\cO_{C}$-modules is $\gm$; we have a central short exact sequence
   \[
   1 \arr \gm \arr \underaut_{k}(C, L) \arr \underaut C \arr 1
   \]
(the surjectivity on the right follows from Proposition~\ref{prop:locally-isomorphic-balanced}). Every extension of $\ga^{2}$ by $\gm$ splits uniquely, so if we denote by $\Gamma$ the inverse image of $(\gm^{2}/\mmu_{2})\rtimes \rC_{2} = \underaut_{k}(C,E)$ in $\underaut_{k}(C, L)$, we have $\underaut_{k}(C, L) = \ga^{1}\rtimes\Gamma$. We need to work out the structure of $\Gamma$. 

Call $\overline{\Gamma}$ the inverse image in $\underaut_{k}(D, M)$ of $\gm^{2} \rtimes \rC_{2} = \underaut_{k}(D, E) \subseteq \underaut D$. We have a central extension
   \[
   1 \arr \gm \arr \overline{\Gamma} \arr \gm^{2} \rtimes \rC_{2} \arr 1
   \]
where $\gm$ is the group of automorphisms of $M$ as a sheaf of $\cO_{D}$-modules; every element of $\overline{\Gamma}$ acts linearly on the fiber of $M$ over $q$; this gives a homomorphism $\overline{\Gamma} \arr \gm$ which splits the embedding $\gm \subseteq \overline{\Gamma}$. Hence we have an isomorphism	$\overline{\Gamma} \simeq \bigl(\gm^{2} \rtimes \rC_{2}\bigr)\times\gm$; so $\overline{\Gamma}$ is a semidirect product $\gm^{3}\rtimes \rC_{2}$, where the action of $\rC_{2}$ switches the first and the second component of $\gm^{3}$. We have a surjective homomorphism $\overline{\Gamma} \arr \Gamma$, whose kernel is $\mmu_{2} \subseteq \overline{\Gamma} \subseteq \underaut_{k}(D,M)$, acting on $(D,M)$ in the way described above. Under the isomorphism $\Gamma \simeq \gm^{3}\rtimes \rC_{2}$ this corresponds to the diagonal embedding $\mmu_{2} \subseteq \mmu_{2}^{3} \subseteq \gm^{3}$. We have proved the following.

\begin{prop}\label{prop:Lcaluno iso BGamma}
The stack $\cP_{1}$ is the classifying stack of a group of the form $\ga^{2}\rtimes \Gamma$, where $\Gamma$ is a group scheme of the form $(\gm^{3}/\mmu_{2})\rtimes \rC_{2}$, where $\mmu_{2} \subseteq \gm^{3}$ is embedded diagonally, and $\rC_{2}$ acts by switching the first and the second component of $\gm^{3}$.
\end{prop}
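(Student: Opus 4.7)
The plan is to exploit the fact that $\cP_1$ is a gerbe (as noted after Proposition~\ref{prop:Lcalzero iso BGLtwo}, since both $\Rcal_1$ and the $\gm$-gerbe $\cP\arr\Rcal$ contribute gerbe structure), so to identify $\cP_1$ as a classifying stack it is enough to (i) produce an object $(C,L)$ of $\cP_1(\spec k)$ and (ii) compute $\underaut_k(C,L)$ as a group scheme. For (i) I will take $C=C_{k,0}$ and build a polarization $L$ by starting with the obvious bidegree~$(1,1)$ invertible sheaf $M$ on the double cover $D=D_{k,0}$ (obtained by gluing $\cO_{D_1}(q)$ with $\cO_{D_2}(q)$ at the fiber over the node $q$ via the differential of the chosen isomorphism $\phi\colon D_1\simeq D_2$), lifting the $\mmu_2$-action to $M$ so that it acts by sign change on the fiber over $q$, descending to $[D/\mmu_2]$, and pushing down to $C$. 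Triviality of the $\mmu_2$-action on the fibers of $M$ over $E$ ensures that the pushforward is an honest invertible sheaf.

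For (ii) I would proceed in two stages, first computing $\underaut_k C$ and then the extension by $\gm=\underaut_{\cO_C}(L)$. To compute $\underaut_k C$, I would split automorphisms into those that fix the divisor $E$ (the "rotations") and the "translations", producing a semidirect decomposition $\underaut_k C=\ga^2\rtimes\underaut_k(C,E)$. The factor $\ga^2$ acts componentwise on the coarse space $\overline C$ by affine translations and lifts uniquely through the ghost-$\mmu_2$ extension $\underaut_k C\arr\underaut_k\overline C$, because any extension of $\ga^2$ by $\mmu_2$ splits uniquely. For $\underaut_k(C,E)$ I would compute $\underaut_k(D,E)=\gm^2\rtimes\rC_2$ directly from the product-of-$\PP^1$'s description and then descend along the double cover $D\arr C$: the $\mmu_2$-quotient identification $[D/\mmu_2]\arr C$ combined with the fact that $\mmu_2\subseteq\gm^2$ is the characteristic diagonal yields $\underaut_k(C,E)=(\gm^2/\mmu_2)\rtimes\rC_2$.

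To incorporate the polarization, I would analyze the central extension
\[
1\arr\gm\arr\underaut_k(C,L)\arr\underaut_k C\arr 1\,,
\]
whose surjectivity follows from Proposition~\ref{prop:locally-isomorphic-balanced}. Again, $\ga^2$ lifts uniquely because extensions of $\ga^2$ by $\gm$ split uniquely, producing $\underaut_k(C,L)=\ga^2\rtimes\Gamma$ where $\Gamma$ is the inverse image of $(\gm^2/\mmu_2)\rtimes\rC_2$. To unwind $\Gamma$ I pull back to the double cover: let $\overline\Gamma\subseteq\underaut_k(D,M)$ be the inverse image of $\gm^2\rtimes\rC_2$. The central extension
\[
1\arr\gm\arr\overline\Gamma\arr\gm^2\rtimes\rC_2\arr 1
\]
is split by the homomorphism $\overline\Gamma\arr\gm$ recording the linear action on the one-dimensional fiber $M_q$, so $\overline\Gamma\simeq\gm^3\rtimes\rC_2$ where $\rC_2$ acts by switching the first two $\gm$-factors. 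The surjection $\overline\Gamma\arr\Gamma$ has kernel equal to the $\mmu_2$ acting on $(D,M)$ in the way used above to descend $M$ to $C$, which under the splitting sits as the diagonal $\mmu_2\subseteq\mmu_2^3\subseteq\gm^3$. Hence $\Gamma=(\gm^3/\mmu_2)\rtimes\rC_2$, as claimed.

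The delicate step I expect to be the main obstacle is the bookkeeping of three different copies of $\mmu_2$ floating around: the geometric $\mmu_2$ acting on $D$, the ghost $\mmu_2\subseteq\underaut_k C$ (which is killed inside $\underaut_k\overline C$), and the $\mmu_2\subseteq\gm$ that acts trivially on the fibers of $M$ over $E$ but by sign on $M_q$. The key computation is verifying that under the splitting $\overline\Gamma\simeq\gm^3\rtimes\rC_2$ these fit together so that the descending $\mmu_2$ is precisely the diagonal inside $\gm^3$; this is what forces the quotient to appear on the $\gm^3$-factor rather than splitting off a separate copy of $\rC_2$ or $\gm$.
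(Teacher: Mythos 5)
Your proposal is correct and follows essentially the same route as the paper: the same construction of the polarized object $(C,L)$ by descending the sheaf $M$ from the double cover $D$, the same decomposition $\underaut_k C=\ga^{2}\rtimes\underaut_k(C,E)$ via unique splittings of extensions of $\ga^{2}$, and the same identification of $\Gamma$ as the quotient of $\overline\Gamma\simeq\gm^{3}\rtimes\rC_2$ (split by the action on the fiber $M_q$) by the diagonal $\mmu_2$. The subtlety you flag about tracking the various copies of $\mmu_2$ is exactly the point the paper's argument is organized around.
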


\subsection{Two vector bundles on the stack of polarized twisted conics}\label{subsec:two-vector-bundles}

Let $\pi\colon\Ccal_\Pcal\to\Pcal$ be the universal conic over $\Pcal$, obtained by pulling back the universal conic $\Ccal$ over the stack $\Rcal$ of twisted conics (Definition \ref{def:stack C}) along the forgetful morphism $\Pcal\to\Rcal$. By construction, the universal conic $\Ccal_\Pcal$ has a universal polarization, which we denote $\Lcal$. 

Let $\Lambda$ be the coherent sheaf defined as $\pi_*\Lcal$, so that we have
\[ \Lambda(\pi\colon C\to S, L)= \pi_*L.  \]
Applying the cohomology and base change theorem (\cite{hall-base-change}) we see that $\Lambda$ is a locally free sheaf of rank $2$. 

Let us describe the fibres of $\Lambda$ over the two geometric points of $\Pcal$. The unique (up to isomorphism) geometric point of $\Pcal_0$ is the smooth conic $\PP^1$, with polarization $\Ocal(1)$. Therefore
\[ \Lambda(\PP^1,\Ocal(1))= \H^0(\PP^1,\Ocal(1)), \]
As already stated in Proposition \ref{prop:Lcalzero iso BGLtwo}, we have that $\underaut_{k}(\PP^1,\Ocal(1))$ is isomorphic to $\GL_2\simeq\underaut_{k}(\H^0(\PP^1,\Ocal(1))$. In particular we have proved the following.
\begin{lm}\label{lm:P0 faithful}
    The action of $\underaut_k(\PP^1,\Ocal(1))$ on $\Lambda(\PP^1,\Ocal(1))$ is faithful.
\end{lm}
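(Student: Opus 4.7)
The plan is to observe that the action map
\[
\underaut_k(\PP^1,\Ocal(1)) \arr \GL(V), \qquad V \eqdef \H^0(\PP^1,\Ocal(1)),
\]
is, by construction, the canonical isomorphism of group schemes invoked in Proposition~\ref{prop:Lcalzero iso BGLtwo}. Being an isomorphism, it is in particular a monomorphism, which is exactly what faithfulness of the action means. The direct verification, which I would include for the reader's convenience, proceeds as follows.

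Let $R$ be a $k$-algebra and let $(\phi,\alpha)$ be an $R$-point of $\underaut_k(\PP^1,\Ocal(1))$ whose induced action $\alpha \circ \phi^{*}$ on $V_R = \H^0(\PP^1_R,\Ocal(1))$ is the identity. Since $\Ocal(1)$ is very ample on $\PP^1$, the evaluation morphism $V_R \otimes_R \Ocal_{\PP^1_R} \twoheadrightarrow \Ocal(1)$ yields a canonical identification $\PP^1_R \simeq \proj(\Sym V_R)$. Under this identification, $\phi$ acts as the automorphism of $\proj(\Sym V_R)$ induced functorially by the linear action on $V_R$, which is the identity by hypothesis. Hence $\phi = \id_{\PP^1_R}$.

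With $\phi = \id$, the datum $\alpha$ reduces to an automorphism of the line bundle $\Ocal(1)$ on $\PP^1_R$, i.e., multiplication by a unique unit $c \in \H^0(\PP^1_R,\Ocal_{\PP^1_R}^\times) = R^\times$. Its induced action on $V_R$ is then multiplication by $c$, so triviality forces $c = 1$ and hence $\alpha = \id$. There is no real obstacle here: the content of the lemma is essentially a restatement of the classical identification $\underaut_k(\PP^1,\Ocal(1)) \simeq \GL_2$ being realized precisely by pullback on global sections.
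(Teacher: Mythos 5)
Your proposal is correct and takes essentially the same route as the paper: the lemma is recorded there as an immediate consequence of the classical identification $\underaut_k(\PP^1,\Ocal(1))\simeq\GL_2$ being realized by the action on $\H^0(\PP^1,\Ocal(1))$, which is exactly your first paragraph. Your explicit verification via the very ampleness of $\Ocal(1)$ and the recovery of $\PP^1_R$ as $\proj(\Sym V_R)$ is a sound, self-contained expansion of the fact the paper simply cites.
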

Consider the morphism $f\colon\spec k\to \Rcal_1$ given by the twisted conic $C$: recall that $C$ is obtained from $D=\PP^1\cup_0\PP^1$ by taking the quotient with respect to the $\mmu_2$ action on $D$ and rigidifying the points at infinity. Let $L$ be the polarization on $C$ obtained by descending along $D\to C$ the line bundle $M:=\Ocal(0)\cup_0\Ocal(0)$. Recall that we have
\begin{align*}
    \underaut_k(D,M)=\ga^2\rtimes \overline{\Gamma}=\ga^2\rtimes \left(\gm^3\rtimes C_2\right), \\
    \underaut_k(C,L)=\ga^2\rtimes \Gamma=\ga^2\rtimes \left((\gm/\mmu_2)^3\rtimes C_2\right).
\end{align*}
Let $X_0\oplus (0)$, $(0)\oplus X_0$ and $X_1\oplus X_1$ be the standard generators of $M=\Ocal(1)\cup_0 \Ocal(1)$.
If $(u,v,t)$ is an element of $\gm^3\subset\overline{\Gamma}$, then by construction it acts on $\H^0(D,M)$ as follows:
\begin{align*}
    (u,v,t)\cdot X_0\oplus (0) &= tu^{-1} X_0\oplus (0), \\
    (u,v,t)\cdot (0)\oplus X_0 &= (0)\oplus tv^{-1} X_0, \\
    (u,v,t) \cdot X_1\oplus X_1 &= t (X_1\oplus X_1).
\end{align*}
Consider the diagonal embedding $\mmu_2\subset\gm^3$. The vector space $\H^0(C,L)=\H^0(D,M)^{\mmu_2}$ is generated by the first two elements above. We deduce the following:
\begin{lm} \label{lm:P1 almost faithful}
    The kernel of the morphism $\ga^2\rtimes \Gamma\arr \GL_2$ induced by the representation $H^0(C,L)$ is isomorphic to $\gm/\mmu_2$, embedded via $t^2\mapsto (1,1,t^2)$. In particular, the action of $\underaut_k(C,L)$ on $\Lambda(C,L)$ is not faithful.
\end{lm}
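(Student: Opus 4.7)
The plan is to compute the kernel of $\rho\colon \ga^{2}\rtimes\Gamma\to\GL(H^{0}(C,L))\simeq\GL_{2}$ directly from the $\gm^{3}$-action formulas stated immediately before the lemma. Restricting those formulas to the $\mu_{2}$-invariants $H^{0}(C,L)=\langle X_{0}\oplus 0,\;0\oplus X_{0}\rangle$ shows that $(u,v,t)\in\gm^{3}$ acts on $H^{0}(C,L)$ by the diagonal matrix $\operatorname{diag}(tu^{-1},tv^{-1})$, while the generator of $\rC_{2}\subset\Gamma$ (induced by the swap $D_{1}\leftrightarrow D_{2}$) exchanges the two basis vectors. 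No kernel element can have a nontrivial $\rC_{2}$-component, since such an element would act as an anti-diagonal matrix and never equal the identity; hence the kernel in $\Gamma$ lies entirely inside the image of $\gm^{3}/\mmu_{2}$.

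Solving $tu^{-1}=tv^{-1}=1$ in $\gm$ forces $u=v=t$, so the kernel in $\gm^{3}$ is the diagonal subgroup $\gm_{\mathrm{diag}}$; since $\gm_{\mathrm{diag}}$ contains the diagonal $\mmu_{2}\subset\gm^{3}$, its image in $\gm^{3}/\mmu_{2}$ is isomorphic to $\gm/\mmu_{2}\simeq\gm$ via squaring. To match the presentation $t^{2}\mapsto(1,1,t^{2})$ of the lemma, I apply the $\rC_{2}$-equivariant change of coordinates $(u,v,t)\mapsto(u/t,\;v/t,\;t)$ on $\gm^{3}$: under it the diagonal becomes $\{(1,1,s):s\in\gm\}$ and the diagonal $\mmu_{2}$ becomes $\{(1,1,\pm 1)\}$, so parametrizing the quotient by squares produces exactly the stated embedding.

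Finally, I would verify that the normal subgroup $\ga^{2}\subset\ga^{2}\rtimes\Gamma$ contributes no further essential kernel: the image of $\ga^{2}$ in $\GL_{2}$ is a connected unipotent subgroup normalized by the image of $\Gamma$, which already contains both the diagonal maximal torus and the swap, and the only connected unipotent subgroup of $\GL_{2}$ preserved under conjugation by both is the trivial one. Equivalently, each $\ga$-factor preserves the one-dimensional subspace of sections of $L$ vanishing on the opposite component of $C$, and since $\ga$ admits no nontrivial characters to $\gm$ it must act trivially on each such subspace. The main technical nuisance is just the bookkeeping in the change of coordinates needed to recover the clean presentation in the statement; once this is done the non-faithfulness assertion follows at once from the non-triviality of the kernel identified above.
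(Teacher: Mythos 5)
Your identification of the kernel inside $\Gamma$ is correct and follows essentially the same route as the paper: the displayed $\gm^{3}$-action on $\H^{0}(D,M)$ restricts to $\operatorname{diag}(tu^{-1},tv^{-1})$ on the invariant subspace $\langle X_{0}\oplus 0,\;0\oplus X_{0}\rangle$, no element with nontrivial $\rC_{2}$-component can act as the identity, and the diagonal $\{(t,t,t)\}$ is exactly the locus where $tu^{-1}=tv^{-1}=1$; under the quotient presentation $(u,v,t,\sigma)\mapsto(ut^{-1},vt^{-1},uv,\sigma)$ of $\Gamma$ used in the paper it maps to $\{(1,1,t^{2})\}$, and your change of coordinates is a harmless variant of this.

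The problem is your last paragraph. The argument you give there --- the image of $\ga^{2}$ in $\GL_{2}$ is a connected unipotent subgroup normalized by the diagonal torus and the swap, hence trivial; equivalently, each $\ga$-factor preserves each of the two lines and admits no nontrivial characters, hence acts trivially on each --- proves that $\ga^{2}$ maps to the \emph{identity} of $\GL_{2}$, i.e.\ that $\ga^{2}$ is \emph{contained in} the kernel. That is the opposite of ``contributes no further essential kernel'': taken at face value, your own reasoning yields $\ker\supseteq\ga^{2}\rtimes(\gm/\mmu_{2})$, which contradicts the conclusion you assert. This step cannot be repaired by showing instead that $\ga^{2}$ acts faithfully, because it really does act trivially on $\langle X_{0}\oplus 0,\;0\oplus X_{0}\rangle$: the explicit formulas appearing later in the paper (proof of Proposition~\ref{prop:Vcal1 quotient}) have the translation parameter $u'$ fixing $x_{0}$ and only moving $x_{1}$, which is killed by taking $\mmu_{2}$-invariants. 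So the computation of the kernel has to be confronted honestly with the $\ga^{2}$-factor rather than waved away. Note that the paper's own justification of this lemma consists only of the $\gm^{3}$-action formulas and never discusses $\ga^{2}$ either, so it does not settle this point; the only part of the statement that is certainly unaffected is the ``in particular'' clause that the action of $\underaut_{k}(C,L)$ on $\Lambda(C,L)$ is not faithful, which follows already from the nontriviality of the kernel you found inside $\Gamma$.
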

In the next few lines, we will use some results on deformations of nodal singularities which are well known to the experts. Nonetheless, we included in Appendix \ref{sec:app} a more detailed discussion of this topic, as we could not find any reference in the literature.

From Proposition \ref{prop:lci} it follows that the closed subscheme $\cP_{1} \subseteq \cP$ is a smooth divisor; hence it has a normal bundle, which is an invertible sheaf on $\cP_{1}$. By Proposition~\ref{prop:Lcaluno iso BGamma} this corresponds to a character of $\ga^{2}\rtimes \Gamma$, which comes from a character of $\Gamma$. We will need to identify this character. Notice that $\cP_{1}$ is the pullback of $\Rcal_{1} \subseteq \Rcal$ under the natural map $\cP \arr \Rcal$, which is smooth; hence the normal bundle comes from $\Rcal$. In other words, the character of $\Gamma$ we are interested with comes from the character of $(\gm^{2}/\mmu_{2})\rtimes \rC_{2}$ corresponding to the normal bundle of $\Rcal_{1}$ in $\Rcal$.

Consider the morphism $f\colon\spec k\to \Rcal_1$ given by the twisted conic $C$: recall that $C$ is obtained from $D=\PP^1\cup_0\PP^1$ by taking the quotient with respect to the $\mmu_2$ action on $D$ and rigidifying the points at infinity. Let $g:D\to C$ be the corresponding morphism. Then from Proposition \ref{prop:normal-in-the-base} we have
\[ f^*\Ncal_{\Pcal_1}=g_*((\Ocal_{\PP^1}(0)\otimes\Ocal_{\PP^1}(0))|_0), \]
where $\Ncal_{\Pcal_1}$ is the normal bundle of $\Pcal_1$.

The group $\underaut_k(D)=\ga^{2}\rtimes(\gm^{2}\rtimes \rC_{2})$ acts on the vector space $(\Ocal_{\PP^1}(0)\otimes\Ocal_{\PP^1}(0))|_0$ by multiplication by $uv$, where $u$ and $v$ are the coordinates of the two $\gm$ components. This character is $\mmu_2$ invariant, hence descends to a character of $(\gm^{2}/\mmu_{2})\rtimes \rC_{2}$. This character is therefore the one associated to the normal bundle of $\Rcal_1$ in $\Rcal$ and, consequently, to $\Pcal_1$ in $\Pcal$. We have proved the following.
\begin{lm}\label{lm:normal bundle P1}
    Consider the description of the group $\Gamma$ as a quotient given by
    \[\gm^{3}\rtimes \rC_{2}\longrightarrow\Gamma,\quad (u,v,t,\sigma)\longmapsto (ut^{-1},vt^{-1},uv,\sigma), \]
    and let $\chi$ be the character of $\ga^{2}\rtimes\Gamma$ given by $uv$. If $V_{\chi}$ is the associated rank one representation, then we have
    \[\Ncal_{\Pcal_1}\simeq [V_{\chi}/\ga^{2}\rtimes\Gamma]\]
\end{lm}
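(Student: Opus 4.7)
The plan is to reduce the computation to a character calculation on $\ga^{2}\rtimes\Gamma$, and then match it against $\chi$. Because $\cP\arr\Rcal$ is a $\gm$-gerbe, hence smooth, and $\cP_{1}=\cP\times_{\Rcal}\Rcal_{1}$ by construction, the normal bundle $\cN_{\cP_{1}}$ is the pullback of $\cN_{\Rcal_{1}/\Rcal}$ along the smooth morphism $\cP_{1}\arr\Rcal_{1}$. As a line bundle on the classifying stack $\cP_{1}=\cB(\ga^{2}\rtimes\Gamma)$, $\cN_{\cP_{1}}$ corresponds to a character of $\ga^{2}\rtimes\Gamma$ that moreover factors through the quotient $\ga^{2}\rtimes((\gm^{2}/\mmu_{2})\rtimes\rC_{2})$, which is induced by the forgetful morphism $\cP_{1}\arr\Rcal_{1}$.

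The next step is to compute that character of $(\gm^{2}/\mmu_{2})\rtimes\rC_{2}$ at the distinguished geometric point $f\colon\spec k\arr\Rcal_{1}$ given by the twisted conic $C$ with its double cover $g\colon D\arr C$, where $D=\PP^{1}\cup_{0}\PP^{1}$. Proposition~\ref{prop:normal-in-the-base} of the Appendix gives
\[
f^{*}\cN_{\Rcal_{1}/\Rcal}\simeq g_{*}\bigl((\cO_{\PP^{1}}(0)\otimes\cO_{\PP^{1}}(0))|_{0}\bigr),
\]
realizing this one-dimensional space as the tensor product of the tangent lines at the node of the two components of $D$. Using the description of the $\gm^{2}$-action on $D$ from Section~\ref{subsec:automorphism-polarized}, the first $\gm$ acts with weight $u$ on the tangent line of $D_{1}$ at the node and trivially on that of $D_{2}$, the second $\gm$ acts with weight $v$ on the tangent line of $D_{2}$ and trivially on that of $D_{1}$, while the generator $\sigma\in\rC_{2}$ swaps the two lines and so acts trivially on the tensor product. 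Thus $\gm^{2}\rtimes\rC_{2}$ acts on $f^{*}\cN_{\Rcal_{1}/\Rcal}$ through the character $(u,v,\sigma)\mapsto uv$, which is trivial on the diagonal $\mmu_{2}$ and therefore descends to the desired character of $(\gm^{2}/\mmu_{2})\rtimes\rC_{2}$.

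Finally, I translate this into the coordinates of the statement. In the presentation $\overline{\Gamma}=\gm^{3}\rtimes\rC_{2}$ of Section~\ref{subsec:automorphism-polarized}, where the third $\gm$-factor is the group of automorphisms of the polarization $L$, the forgetful map $\overline{\Gamma}\arr\gm^{2}\rtimes\rC_{2}$ is $(u,v,t,\sigma)\mapsto(u,v,\sigma)$. Pulling back the $uv$-character gives the character $(u,v,t,\sigma)\mapsto uv$ on $\overline{\Gamma}$, which is trivial on the diagonal $\mmu_{2}$ and hence is a character of $\Gamma$; under the stated quotient presentation $(u,v,t,\sigma)\mapsto(ut^{-1},vt^{-1},uv,\sigma)$ this character is precisely the third-coordinate function, that is, $\chi$. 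The only substantive point is the appeal to Proposition~\ref{prop:normal-in-the-base}; once the normal bundle is identified with the tensor of tangent lines at the node, everything else is a transparent weight computation on the torus action.
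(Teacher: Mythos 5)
Your proof is correct and follows essentially the same route as the paper: reduce to the normal bundle of $\Rcal_1$ in $\Rcal$ via the smooth gerbe morphism $\cP\arr\Rcal$, identify its fibre at the distinguished point with the tensor product of the tangent lines of the two branches at the node using Proposition~\ref{prop:normal-in-the-base}, and read off the character $uv$ of $\gm^{2}\rtimes\rC_{2}$. The only difference is that you carry out the final bookkeeping with the quotient presentation of $\Gamma$ a little more explicitly than the paper does.
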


\subsection{Quotient structure}

A quotient stack is defined as in \cite{EHKV}, as a quotient $[X/G]$ where $X$ is an algebraic space of finite type over $k$, and $G$ is a linear group scheme over $k$ acting on $X$.

\begin{prop}
The stack $\cP$ is a quotient stack.
\end{prop}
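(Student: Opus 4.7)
The plan is to use the standard criterion that $\cP$ is a quotient stack if it admits a vector bundle $V$ of rank $n$ on which the automorphism group of every object acts faithfully: given such $V$, the $\GL_n$-frame bundle $\underisom_{\cP}(\cO_{\cP}^{\oplus n}, V) \to \cP$ has trivial automorphism groups, hence is representable by an algebraic space $X$, and $\cP \simeq [X/\GL_n]$. By the stratification $\cP = \cP_0 \sqcup \cP_1 = \cB\GL_2 \sqcup \cB(\ga^2\rtimes\Gamma)$, it suffices to check faithfulness for the two stabilizer actions on the fibers of $V$ over the unique geometric objects $(\PP^1,\cO(1)) \in \cP_0(k)$ and $(C,L) \in \cP_1(k)$.

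I would take $V := \Lambda \oplus \cO_{\cP}(\cP_1)$, a rank $3$ vector bundle. On $\cP_0 = \cB\GL_2$ the summand $\Lambda$ already gives a faithful representation of $\GL_2 \simeq \underaut_k(\PP^1,\cO(1))$ by Lemma \ref{lm:P0 faithful}, so $V|_{\cP_0}$ has faithful stabilizer action. On $\cP_1 = \cB(\ga^2\rtimes\Gamma)$ the action of $\ga^2\rtimes\Gamma$ on $\Lambda|_{(C,L)}$ has kernel $\gm/\mmu_2 \subseteq \Gamma$ by Lemma \ref{lm:P1 almost faithful}, so the task reduces to showing that the second summand acts faithfully on this residual kernel. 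Now $\cO_{\cP}(\cP_1)|_{\cP_1}$ is the normal bundle $\Ncal_{\cP_1}$, which by Lemma \ref{lm:normal bundle P1} corresponds to the character $\chi(u,v,t) = uv$ in the presentation $\gm^3 \rtimes \rC_2 \arr \Gamma$, $(u,v,t,\sigma) \mapsto (ut^{-1}, vt^{-1}, uv, \sigma)$. The kernel $\gm/\mmu_2 \subseteq \Gamma$ is the image of the diagonal $\{(t,t,t)\} \subseteq \gm^3$, and $\chi(t,t,t) = t^2$ is precisely the identity character of $\gm/\mmu_2 \simeq \gm$ under the parameter $s = t^2$; hence the combined representation is faithful at $(C,L)$ as well.

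The conclusion is that the $\GL_3$-torsor associated to $V$ is representable by an algebraic space $X$ of finite type over $k$, giving the desired presentation $\cP \simeq [X/\GL_3]$. The main obstacle is the character computation on $\cP_1$: reconciling the two presentations of $\Gamma$ in Proposition \ref{prop:Lcaluno iso BGamma} and Lemma \ref{lm:normal bundle P1} is what makes it possible to match the kernel of Lemma \ref{lm:P1 almost faithful} with the character of $\Ncal_{\cP_1}$ and thereby conclude that the combined action on $V|_{(C,L)}$ is faithful.
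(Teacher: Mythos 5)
Your proof is correct and follows essentially the same route as the paper: both apply the criterion of \cite{EHKV}*{Lemma 2.12} to the rank-$3$ bundle $\Lambda\oplus\cO_{\cP}(\cP_1)$, using Lemma \ref{lm:P0 faithful} on $\cP_0$ and combining Lemma \ref{lm:P1 almost faithful} with Lemma \ref{lm:normal bundle P1} on $\cP_1$ to see that the residual kernel $\gm/\mmu_2$ acts faithfully on the normal bundle. Your explicit check that the kernel is the image of the diagonal of $\gm^3$ and that $\chi(t,t,t)=t^2$ is a faithful character just makes explicit a step the paper leaves implicit.
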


\begin{proof}
We will use the criterion of \cite[Lemma~2.12]{EHKV}; so it is enough to produce a locally free sheaf $E$ on $\cP$ such that the action of the stabilizers of geometric points $p\colon \spec\overline{k} \arr \cP$ on the fiber $E(p)$ is faithful.

The stack $\cP$ has, up to isomorphism, two geometric points, corresponding to the smooth polarized conic $(\PP^1,\Ocal(1))$ and to the twisted polarized conic $(C,L)$ that we introduced before. Consider the vector bundle on $\Pcal$ defined as
\[ E:=\Lambda\oplus\Ocal(\Pcal_1). \]
By Lemma \ref{lm:P0 faithful}, the action of $\underaut_k(\PP^1,\Ocal(1))$ on $\Lambda(\PP^1,\Ocal(1))$ is faithful, hence also the action on $E(\PP^1,\Ocal(1))$ must be faithful.

By Lemma \ref{lm:P1 almost faithful}, the stabilizer of the action of $\underaut_k(C,L)$ on $\Lambda(C,L)$ is a copy of $\gm/\mmu_2$. On the other hand, we know from Lemma \ref{lm:normal bundle P1} that the aforementioned subgroup acts faithfully on $\Ncal_{\Pcal_1}=\Ocal(\Pcal_1)|_{\Pcal_1}$. Therefore, the action of $\underaut_k(C,L)$ on $E$ is faithful. Putting all together, we deduce that $\Pcal$ is a quotient stack.
\end{proof}

\section{The stack of stable curves of genus two via twisted conics}\label{sec:stable curves}
In this Section we show how to obtain the moduli stack $\overline{\Mcal}_2$ as an open substack of a vector bundle $\Vcal$ over the stack $\Pcal$ of polarized twisted conics (Theorem \ref{prop:M2bar iso}). For this, after introducing the already mentioned vector bundle (Definition \ref{def:stack V}), we investigate some closed substacks of $\Vcal$ (Definitions \ref{def:stack D3}, \ref{def:stack D33} and \ref{def:stack Z}), whose union will turn out to be the complement of the moduli stack $\overline{\Mcal}_2$ in $\Vcal$.

\subsection{A vector bundle over the stack of polarized twisted conics}

Recall from Definition \ref{def:stack P} that the objects of the stack $\Pcal$ of polarized twisted conics are pairs $(C\to S,L)$, where $C\to S$ is a twisted conic and $L$ is a balanced line bundle over $C$ of degree $1$, i.e. it has degree $1$ on the smooth fibers and bidegree $(\frac{1}{2},\frac{1}{2})$ on the singular ones.

A straightforward application of the cohomology and base change theorem (\cite{hall-base-change}) shows that the coherent sheaf \begin{equation}\label{eq:stack V}\Vcal:=\pi_*\left(\Lcal^{\otimes 2}\otimes\omega_{\Ccal_\Pcal/\Pcal}^{\otimes (-2)}\right)\end{equation}
is locally free of rank $7$.
\begin{df}\label{def:stack V}
We define the stack $\Vcal$ as the vector bundle over $\Pcal$ introduced in (\ref{eq:stack V}). The objects of $\Vcal$ are polarized twisted conics $(C\to S,L)$ together with a global section $s$ of the line bundle $\omega_{C/S}^{\otimes(-2)}\otimes L^{\otimes 2}$.
The restriction of $\Vcal$ to $\Pcal_0$ (resp. $\Pcal_1$) is denoted by $\Vcal_0$ (resp. $\Vcal_1$).
\end{df}
The stack $\Vcal$ is a smooth algebraic stack over $k$, because so is $\Pcal$ by Proposition \ref{prop:Lcalzero iso BGLtwo}.

\subsection{Substacks of sections with special vanishing properties}\label{subsec:closed substacks}
We want to define a few closed substacks of the stack $\Vcal$ by imposing some vanishing conditions on the sections. By excising these closed substacks we will obtain an open substack of $\Vcal$ isomorphic to the stack of stable curves of genus two. For convenience, set
\[ \cF:= \Lcal ^{\otimes 2} \otimes \omega_{\Ccal_\Pcal/\Pcal}^{\otimes (-2)}.\]
Consider the cartesian square
\[ \xymatrix{
\Ccal_{\Pcal}\times_{\Pcal}\Ccal_\Pcal \ar[r]^{\pr_2} \ar[d]^{\pr_1} & \Ccal_\Pcal \ar[d]^{\pi} \\
\Ccal_\Pcal \ar[r]^{\pi} & \Pcal }\]
and let $\Delta_{\Ccal/\Pcal}$ be the closed substack of $\Ccal_{\Pcal}\times_{\Pcal}\Ccal_\Pcal $ given by the diagonal embedding. We have an injective morphism of sheaves
\[ \pr_2^*\cF\otimes \cI_{\Delta_{\Ccal/\Pcal}}^3 \hookrightarrow \pr_2^*\cF \] 
where the term on the left is the line bundle $\cF$ tensored with the third power of the ideal sheaf of $\Delta_{\Ccal/\Pcal}$. After taking the pushforward along the first projection we get
\[ \pr_{1*}\left( \pr_2^*\cF\otimes \cI_{\Delta_{\Ccal/\Pcal}}^3 \right)\hookrightarrow \pr_{1*}\pr_2^*\cF. \]
The right term is isomorphic to $\pi^*\pi_*\cF=\pi^*\Vcal$. Away from the singular locus of $\Ccal_\Pcal$, the injective morphism above is an embedding of locally free sheaves. In other terms, we have defined a vector subbundle $\Wcal$ of the restriction of $\Vcal\times_{\Pcal}\Ccal_{\Pcal}$ to the smooth locus of the universal conic. The stack $\Wcal$ will play a role in \ref{subsec:relations from W}.

By construction, the objects of this vector subbundle are of the form $(C\to S,L,s,p)$ where $s$ is a global section of $L^{\otimes 2}\otimes\omega_{C/S}^{\otimes (-2)}$, $p\colon S\to C$ is a section of $C\to S$ supported in the smooth locus (hence a Cartier divisor), and $s$ vanishes with order at least $3$ along the image of $p$. The closure of this substack in the vector bundle $\Vcal\times_{\Pcal}\Ccal_{\Pcal}$ coincides with the closed substack that can be defined as the relative spectrum
\begin{equation}\label{eq:D3} \underline{{\rm Spec}}_{\Ocal_{\pi^*\Vcal}}\left( \underline{{\rm Sym}} \left( \pr_2^*\cF\otimes \cI_{\Delta_{\Ccal/\Pcal}}^3 \right)^{\vee} \right).  \end{equation}

\begin{df}\label{def:stack D3}
We define the closed substack $\Dcal^3\subset \Vcal$ as the scheme-theoretic image (\cite[\href{https://stacks.math.columbia.edu/tag/0CMH}{Tag 0CMH}]{stacks-project}) along the proper morphism $\Vcal\times_{\Pcal}\Ccal_{\Pcal}\to\Vcal$ of the closed substack (\ref{eq:D3}).
\end{df}
The generic point of $\Dcal^3$ corresponds to a polarized twisted conic $(C,L)$ together with a section $s$ of $L^{\otimes 2}\otimes\omega_{C}^{\otimes (-2)}$ whose vanishing locus has a triple point.

In the same way, we can define the closed substack $\Dcal^{3,3}\subset\Vcal$ parametrizing sections with two triple roots and their degenerations. Indeed, consider the fibered product
\[ \Ccal_{\Pcal}\times_{\Pcal} \Ccal_{\Pcal} \times_{\Pcal} \Ccal_{\Pcal}. \]
Let $\pr_{13}^*\Delta_{\Ccal/\Pcal}$ be the pullback of the diagonal along the projection on first and third factor, and let $\pr_{23}^*\Delta_{\Ccal/\Pcal}$ be the pullback of the diagonal along the projection on the second and third factor. Then over $\Ccal_{\Pcal}\times_{\Pcal}\Ccal_{\Pcal}$ we have an injective morphism of sheaves
\begin{equation}\label{eq:D33} \pr_{12*}\left( \cI_{\pr_{13}^*\Delta_{\Ccal/\Pcal}\cup \pr_{23}^*\Delta_{\Ccal/\Pcal}} \otimes \pr_3^*\cF\right)\hookrightarrow  \pr_3^*\cF \end{equation}
The relative spectrum of the symmetric algebra generated by the sheaf on the left defines a closed substack of $\Vcal\times_{\Pcal} \Ccal_{\Pcal}\times_{\Pcal}\Ccal_{\Pcal}$. Away from the singular locus, the points of this closed substack are polarized twisted conics with two markings $p_1$ and $p_2$ and a section of $L^{\otimes 2}\otimes\omega_{C}^{\otimes (-2)}$ having a triple root both in $p_1$ and $p_2$.
\begin{df}\label{def:stack D33}
We define the closed substack $\Dcal^{3,3}\subset\Vcal$ as the scheme-theoretic image (\cite[\href{https://stacks.math.columbia.edu/tag/0CMH}{Tag 0CMH}]{stacks-project}) along the proper morphism $\Vcal\times_{\Pcal} \Ccal_{\Pcal}\times_{\Pcal}\Ccal_{\Pcal}\to  \Vcal$ of the relative spectrum of the symmetric algebra generated by the sheaf on the left of (\ref{eq:D33})
\end{df}

The last closed substack of $\Vcal$ that we want to consider is, roughly speaking, the one parametrizing those sections that vanish on the singular locus of singular polarized twisted conics. The construction is similar to the ones already discussed: consider the injective morphism of coherent sheaves
\[ \cI_{\Sigma_{\Ccal_\Pcal}} \hookrightarrow \Ocal_{\Ccal_\Pcal} \]
where on the left we have the ideal sheaf of the singular locus of $\Ccal_\Pcal$ (Definition \ref{def:stack C}). After taking the tensor product with $\cF$, the pushforward along $\pi\colon\Ccal_\Pcal \to \Pcal$ gives us
\[ \pi_*\left(\cI_{\Sigma_{\Ccal_\Pcal}}\otimes\cF \right) \hookrightarrow \pi_*\cF=\Vcal \]
This induces a closed embedding as follows
\[ \underline{{\rm Spec}}_{\Ocal_{\Pcal}}\left(\underline{{\rm Sym}}~\pi_*\left(\cI_{\Sigma_{\Ccal_\Pcal}}\otimes\cF\right)^\vee \right)\hookrightarrow \underline{{\rm Spec}}_{\Ocal_\Pcal}\left(\underline{{\rm Sym}} \left(\Vcal^\vee\right) \right), \]
where the term on the right is the total space of $\Vcal$.
\begin{df}\label{def:stack Z}
We define the closed substack $\Zcal$ inside $\Vcal$ as the left hand side of the embedding above.
\end{df}
Observe that by construction an object in $\Zcal$ is of the form $(C\to S,L,s)$ where the polarized twisted conic $(C\to S,L)$ is singular and the global section $s$ of $L^{\otimes 2}\otimes \omega_{C/S}^{\otimes (-2)}$ vanishes along the singular locus $\Sigma_C$.

\subsection{The stack of stable curves of genus two}
We are ready to prove our first main result.
\begin{thm}\label{prop:M2bar iso}
   Suppose that the base field $k$ has characteristic $\neq 2$. Then we have
   \[ \overline{\Mcal}_2\simeq\Vcal\smallsetminus\left(\Dcal^3\cup \Zcal\right)\]
\end{thm}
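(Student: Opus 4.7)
The plan is to exhibit mutually inverse morphisms
\[
\Phi \colon \Vcal \smallsetminus (\Dcal^3 \cup \Zcal) \longrightarrow \overline{\Mcal}_2, \qquad
\Psi \colon \overline{\Mcal}_2 \longrightarrow \Vcal \smallsetminus (\Dcal^3 \cup \Zcal),
\]
using on the one side the classical double cover construction, and on the other the fact that every stable genus-two curve is hyperelliptic with a canonical involution $\iota$. For $\Phi$, starting from an object $(C \to S, L, s)$, I set $M := L^{-1} \otimes \omega_{C/S}$ and view $s$ as a homomorphism $M^{\otimes 2} \to \Ocal_C$ via the identification $L^{\otimes 2} \otimes \omega_{C/S}^{\otimes -2} \simeq (M^{\otimes 2})^{\vee}$; this defines an $\Ocal_C$-algebra structure on $\Ocal_C \oplus M$, and I set $X := \underline{\mathrm{Spec}}_C(\Ocal_C \oplus M)$. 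Riemann--Hurwitz gives arithmetic genus two. The condition $s \notin \Dcal^3$ ensures that the zero scheme of $s$ has multiplicity at most two at each smooth point of $C$, and hence $X$ has at worst a node over such a point. The condition $s \notin \Zcal$ says that $s$ does not vanish on $\Sigma_C$; a local analysis in the model $C_{R,f}$ of Proposition~\ref{prop:local-twisted-conic} shows this is exactly the condition for $X$ to remain nodal (and in fact to acquire a separating node) over the stacky node of $C$. Stability follows because the hyperelliptic map $X \to C$ contracts no component and each rational component of $C$ carries enough special points coming from the branch divisor and the nodes.

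For $\Psi$, given a stable genus-two curve $X \to S$, the hyperelliptic involution $\iota \in \Aut(X/S)$ is the unique involution whose fibrewise quotient has arithmetic genus zero; it exists in families because it is canonically characterized on geometric fibres, and can be realized as the deck involution of the canonical morphism $X \to \PP(p_{*}\omega_{X/S}^{\vee})$. Let $C := [X/\iota]$ be the stack quotient. A local computation at each type of node of $X$ shows that at a non-separating node, where $\iota$ swaps the two branches, the quotient is smooth, whereas at a separating node, where $\iota$ acts on each branch by multiplication by $-1$, the stack quotient acquires a $\mmu_2$-gerbe, giving precisely the twisted conic of index two of Section~\ref{sec:twisted conics}; in particular $X \to C$ is flat. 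Decomposing $\pi_{*}\Ocal_X = \Ocal_C \oplus M$ under the $\iota$-action and setting $L := M^{\vee} \otimes \omega_{C/S}$ produces a balanced invertible sheaf of degree one on $C$, while the algebra structure on $\pi_{*}\Ocal_X$ determines a section $s \in H^0(C, L^{\otimes 2} \otimes \omega_{C/S}^{\otimes -2})$. Since $X$ is nodal we have $s \notin \Dcal^3$, and since $X$ has at worst a node over the node of $C$ we have $s \notin \Zcal$.

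The compositions $\Phi \circ \Psi$ and $\Psi \circ \Phi$ are the identity essentially by construction: the algebra $\pi_{*}\Ocal_X$ is precisely the algebra used to define $\Phi$, and the hyperelliptic involution on the double cover built from $(C, L, s)$ is its deck involution over $C$. The main obstacle is the local analysis at the stacky node of a twisted conic in both directions. In the forward direction one must verify via the presentation $D_{R,f} \to C_{R,f}$ that the nonvanishing of $s$ on $\Sigma_C$ yields a flat, étale double cover there, intrinsically compatible with the $\mmu_2$-structure and producing a separating node of $X$; in the reverse direction one must show that the stack quotient $[X/\iota]$ at a separating node of $X$ genuinely recovers the twisted conic, not merely its coarse space. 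Once these technical points are settled, the remaining verifications -- stability, genus, and the identification of the polarization and of the section -- are routine consequences of the standard theory of flat double covers.
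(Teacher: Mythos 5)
Your two constructions coincide with the paper's: the forward map is the relative spectrum of $\Ocal_C\oplus(L^\vee\otimes\omega_{C/S})$ with multiplication given by $s$, and the inverse divides a stable curve by its hyperelliptic involution. The forward direction is correct as you describe it. The gap is in $\Psi$: you set $C:=[X/\iota]$ and discuss the stacky structure only at the nodes of $X$, but the fixed locus of $\iota$ also contains the six smooth Weierstrass points (and their degenerations), and $[X/\iota]$ carries $\mmu_2$-stabilizers along the images of \emph{all} components of the fixed locus. Hence $[X/\iota]$ is not a twisted conic in the sense of Section~\ref{sec:twisted conics}, which permits stacky structure only at the node, so the pair you produce does not lie in $\Pcal$. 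Worse, $X\to[X/\iota]$ is an \'etale $\langle\iota\rangle$-torsor, so in your decomposition $\pi_*\Ocal_X=\Ocal_C\oplus M$ the eigensheaf $M$ is $2$-torsion and the map $M^{\otimes 2}\to\Ocal_C$ coming from the algebra structure is an isomorphism: the resulting ``section'' $s$ is nowhere vanishing and records nothing about the branch divisor, so this $\Psi$ cannot be inverse to $\Phi$.

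The missing step is the rigidification the paper performs: one removes the $\mmu_2$-gerbe along the image of the \emph{smooth} part of the fixed locus, keeping the index-$2$ structure only at the separating nodes. On the resulting twisted conic $C$ the induced map $f\colon X\to C$ is representable, finite flat of degree $2$ and genuinely ramified along the branch divisor; then $f_*\Ocal_X=\Ocal_C\oplus\cF$ with $\cF$ of degree $-3$ on smooth fibers and bidegree $(-\tfrac32,-\tfrac32)$ on singular ones, $L:=\cF^\vee\otimes\omega_{C/S}$ is a polarization of degree $1$, and the multiplication $\cF^{\otimes 2}\to\Ocal_C$ is a section vanishing exactly on the branch divisor. (Pushforward along the rigidification does not commute with tensor powers, which is why the section only becomes nontrivial at this stage.) A secondary point: your existence argument for $\iota$ via the canonical morphism breaks down along $\Delta_1$, where the canonical system has base points at the separating node and the canonical map is not a finite double cover --- this is precisely the failure that motivates introducing twisted conics --- so the hyperelliptic involution in families needs a different justification there.
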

\begin{proof}
Recall from Definition \ref{def:stack V} that the objects of $\Vcal$ are of the form $(C\to S,L,s)$, where $C\to S$ is a twisted conic with a polarization $L$ and $s$ is a global section of $L^{\otimes 2}\otimes\omega_{C/S}^{\otimes (-2)}$. 

If an object belongs to the closed substack $\Dcal^3$ (Definition \ref{def:stack D3}) then it has the following property: on each fiber of $C\to S$, the vanishing locus of the section must have at least a triple point, or it must vanish on the singular locus. In this second case, there are restraints on the vanishing order of the section that follow from how $\Dcal^3$ is defined, but we don't need to be more precise about these conditions.

If an object $(C\to S,L,s)$ belongs to the closed substack $\Zcal$ (Definition \ref{def:stack Z}), then the section $s$ must vanish on the singular locus of the twisted conic. Therefore, the objects of the open substack $\Vcal\smallsetminus\left(\Dcal^3\cup \Zcal\right)$ are of the form $(C\to S,L,s)$ where on each fiber of $C\to S$ the section $s$ can have at most a double root, and it cannot vanish on the singular locus.

First we are going to construct a morphism from $\Vcal\smallsetminus\left(\Dcal^3\cup \Zcal\right)$ to $\overline{\Mcal}_2$ . Given an object $(C\to S,L,s)$ in $\Vcal\smallsetminus\left(\Dcal^3\cup \Zcal\right)$, we will show how to construct a family of stable curves of genus $2$ over $S$. The strategy is basically the same adopted in \cite{ArsVis}.

Consider the $\Ocal_C$-algebra given by
\[ \Ocal_C \oplus \left(L^\vee\otimes\omega_{C/S}\right) \]
where the multiplicative structure is determined by the two morphisms
\[ \Ocal_C\otimes \left(L^\vee\otimes\omega_{C/S}\right) \longrightarrow L^\vee\otimes\omega_{C/S},\quad \left(L^\vee\otimes\omega_{C/S}\right)^{\otimes 2}\longrightarrow \Ocal_C,\]
where the map on the left is the evaluation at the section $s$. Define
\[ C':=\underline{{\rm Spec}}_{\Ocal_C}\left( \Ocal_C \oplus \left(L^\vee\otimes\omega_{C/S}\right) \right)\longrightarrow C, \]
and let $\overline{C}$ be the coarse moduli space of $C$.
There is a well defined coaction
\[ \Ocal_C \oplus \left(L^\vee\otimes\omega_{C/S}\right) \longrightarrow \left( \Ocal_C \oplus \left(L^\vee\otimes\omega_{C/S}\right) \right) \otimes k[t]/(t^2-1)\]
that sends an element $(a,b)$ to $a\otimes 1 + b\otimes t$. Therefore, we have an action of $\mmu_2$ on $C'$ which turns it into a ramified cover of degree $2$ over $C$. The ramification locus is by construction the vanishing locus of $s$: as the section $s$ does not vanish on the singular locus $\Sigma_C$ of $C$, this implies that $C'\to C$ is \'{e}tale in a neighbourhood of $\Sigma_C$. As \'{e}tale locally $\Sigma_C\simeq \Sigma_{\overline{C}}\times \cB\mmu_2$, this implies that $\Sigma_{C'}$ is a scheme and it is fixed by the action of $\mmu_2$.

Putting all together, we deduce the following: first, the stack $C'$ is actually a scheme. Second, the morphism $C'\to \overline{C}$ is a double cover ramified along the union of the vanishing locus of $s$ with the singular locus. By the Hurwitz formula, this implies that $C'\to S$ is a family of curves of genus two. Remember that the vanishing locus of $s$ can have at most double points: by direct inspection, this implies that the fibers of $C'\to S$ can have at most nodal singularities. In other terms, we have proved that $C'\to S$ is a family of stable curves of genus two.

It is easy to see that the association $(C\to S,L,s)\mapsto (C'\to S)$ is actually a functor and hence it defines a morphism
\[ \Vcal\smallsetminus\left( \Dcal^3\cup\Zcal \right)\longrightarrow \overline{\Mcal}_2.  \]
To construct an inverse we argue as follows: given a family of stable curves of genus two $C'\to S$ we can take the quotient stack $[C'/\mmu_2]$, whose coarse moduli space $\overline{C}\to S$ is a family of conics. The quotient stack $[C'/\mmu_2]$ is a scheme away from the image of the locus fixed by the involution, which is the union of two divisors: one is the singular locus $\Sigma_{C'}$ of $C'$, and the other is its complement $D$ in the fixed locus. Observe that the image of the fixed locus is by construction a gerbe over the ramification locus in $\overline{C}$ banded by $\mmu_2$.

We can rigidify the gerbe supported on the image of $D$. Let us denote $C\to S$ the resulting rigidified stack, which is a twisted conic. The resulting map $f\colon C'\to C$ is representable, flat and finite of degree $2$. The standard theory of cyclic covers tells us that $f_*\Ocal_{C'}\simeq \Ocal_{C}\oplus \cF$ for some line bundle $\cF$ over $C$, and that
\[ C'\simeq \underline{{\rm Spec}}_{\Ocal_C}\left( \Ocal_C\oplus\cF \right), \]
with the multiplicative structure of the algebra $\Ocal_C\oplus\cF$ determined by a section $s$ of $\cF^{\otimes (-2)}$. Moreover, the ramification locus of $C'\to C$ must coincide with the vanishing locus of $s$.

This  implies that $\cF$ must have degree $-3$ on the smooth fibers of $C\to S$ and bidegree $(-\frac{3}{2},-\frac{3}{2})$ on the singular ones. Therefore, the pair $(C\to S,\cF^\vee \otimes \omega_{C/S})$ is a polarized twisted conic, and $(C\to S,\cF^\vee \otimes \omega_{C/S},s)$ is an object of $\Vcal$.

The fact that the fibers of $C'\to S$ are stable easily implies that the ramification locus of $C'\to C$ can have at most double points, and we already checked that it cannot contain the singular locus. As the ramification locus coincides with the vanishing locus of $s$, we deduce that $(C\to S,\cF^\vee \otimes \omega_{C/S},s)$ actually belongs to $\Vcal\smallsetminus \left(\Dcal^3\cup \Zcal\right)$.
The functoriality of the association 
\[ (C'\to S)\longmapsto (C\to S,\cF^\vee \otimes \omega_{C/S},s) \]
is straightforward to verify. We have in this way constructed a morphism
\[ \Vcal\smallsetminus\left(\Dcal^3\cup\Zcal \right) \longrightarrow \overline{\Mcal}_2. \]
A direct inspection shows that the two morphisms of stacks that we have defined are one the inverse of the other, thus concluding the proof.
\end{proof}
In particular, we have constructed a morphism $\overline{\Mcal}_2\to \Pcal$. Observe that the preimage of $\Pcal_1$ in $\overline{\Mcal}_2$ is the divisor $\Delta_1$ of stable curves of genus two with at least one separating node. The generic point of $\Delta_1$ correspond to a curve formed by two smooth curves of genus one glued in one point.

Recall that the \emph{Hodge bundle} on $\overline{\cM}_{2}$ associates with each family $\rho\colon X \arr S$ in $\overline{\cM}_{2}$ the sheaf $\rho_{*}\omega_{\rho}$ on $S$; it is a locally free sheaf of rank~$2$.

\begin{prop}
The pullback of $\Lambda$ to $\overline{\cM}_{2}$ along the projection $\overline{\cM}_{2} \arr \cP$ is canonically isomorphic to the Hodge bundle.
\end{prop}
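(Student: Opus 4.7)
The plan is to trace through the construction of the morphism $\overline{\cM}_{2} \arr \cP$ given in the proof of Theorem~\ref{prop:M2bar iso} and use duality for the hyperelliptic double cover. Given a family of stable curves $\rho\colon X \arr S$ in $\overline{\cM}_{2}$, the associated polarized twisted conic is obtained as follows: the quotient by the hyperelliptic involution, after rigidifying the ramification divisor, produces a twisted conic $\pi\colon C \arr S$ with a flat finite double cover $f\colon X \arr C$. Writing $f_{*}\cO_{X} = \cO_{C} \oplus \cF$, the polarization is defined to be $L := \cF^{\vee} \otimes \omega_{C/S}$. Thus the pullback of $\Lambda$ to $\overline{\cM}_{2}$ at the family $X \arr S$ is $\pi_{*}L = \pi_{*}(\cF^{\vee}\otimes\omega_{C/S})$, and I want to identify this canonically with $\rho_{*}\omega_{\rho} = \pi_{*}f_{*}\omega_{X/S}$.

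Next I would compute $f_{*}\omega_{X/S}$ via Grothendieck duality for the finite flat morphism $f$. Since $\omega_{X/S} = f^{*}\omega_{C/S} \otimes \omega_{X/C}$ and $f_{*}\omega_{X/C} = \curshom_{\cO_{C}}(f_{*}\cO_{X}, \cO_{C}) = \cO_{C} \oplus \cF^{\vee}$, the projection formula yields a canonical splitting
\[
f_{*}\omega_{X/S} \simeq \omega_{C/S} \oplus (\omega_{C/S} \otimes \cF^{\vee}) = \omega_{C/S} \oplus L.
\]
Applying $\pi_{*}$ gives $\rho_{*}\omega_{X/S} \simeq \pi_{*}\omega_{C/S} \oplus \pi_{*}L$. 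The plan is then to show that the first summand vanishes: the coarse moduli space $\overline{C} \arr S$ is a conic (smooth or nodal) of arithmetic genus~$0$, and for the tame Deligne--Mumford stack $C$ we have $\pi_{*}\omega_{C/S} = \pi_{*}\omega_{\overline{C}/S}$, which is zero fiberwise by Serre duality together with $\H^{0}(\overline{C}_{s}, \cO_{\overline{C}_{s}}) \simeq k(s)$. By cohomology and base change (\cite{hall-base-change}) the pushforward itself vanishes. This would give a canonical isomorphism $\rho_{*}\omega_{X/S} \simeq \pi_{*}L$, which is exactly the statement.

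The main technical point — and what I expect to be the only nontrivial step — is the duality identification $f_{*}\omega_{X/C} \simeq \curshom_{\cO_{C}}(f_{*}\cO_{X}, \cO_{C})$ in the stacky setting, since $C$ is a tame Deligne--Mumford stack rather than a scheme. For a finite flat morphism to a tame DM stack this formula still holds (either by \'{e}tale-local reduction to the scheme case, or because formation of the dualizing complex commutes with tame stacky quotients), and the argument given for the vanishing of $\pi_{*}\omega_{C/S}$ likewise reduces to the scheme-theoretic statement on $\overline{C}$. Finally, one checks that the identification is functorial in $(X \arr S)$ so that it globalizes to an isomorphism of locally free sheaves on $\overline{\cM}_{2}$; this is immediate because every map used (the projection formula, duality for $f$, and base change) is canonical.
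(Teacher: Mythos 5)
Your argument is correct and follows essentially the same route as the paper: both reduce to Grothendieck duality for the finite flat double cover $f\colon X \arr C$, obtain the canonical splitting $f_{*}\omega_{X/S} \simeq \omega_{C/S} \oplus L$ from $f_{*}\cO_{X} \simeq \cO_{C} \oplus (L^{\vee}\otimes\omega_{C/S})$, and conclude by pushing forward and using $\pi_{*}\omega_{C/S}=0$. Your factorization of the duality step through $\omega_{X/C}$ and the projection formula, and your spelled-out justification of the vanishing of $\pi_{*}\omega_{C/S}$, are only minor elaborations of the paper's proof.
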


\begin{proof}
The argument is essentially contained in \cite{VisM2}. Consider a family $\rho\colon X \arr S$ in $\overline{\cM}_{2}$; denote by $(C,L)$ the associated twisted conic. We have a tautological factorization $X \xarr{f} C \xarr{\pi} S$; we need to produce a canonical functorial isomorphism $\rho_{*}\omega_{\rho} \simeq \pi_{*}L$.

By Grothendieck duality, we have a canonical isomorphism 
\[f_{*}\omega_{\rho} \simeq \underhom_{\cO_{C}}(f_{*}\cO_{X}, \omega_{\pi}).\]
On the other hand, by construction we have a tautological isomorphism $f_{*}\cO_{X} \simeq \cO_{C} \oplus (L^{\vee} \otimes_{{\cO}_{C}}\omega_{\pi})$; so we obtain a canonical isomorphism $f_{*}\omega_{\rho} \simeq \omega_{\pi} \oplus L$, hence, by applying $\pi_{*}$, a canonical isomorphism $\rho_{*}\omega_{\rho} \simeq \pi_{*}\omega_{\pi} \oplus \pi_{*}L$. Since $\pi_{*}\omega_{\pi} = 0$, this concludes the proof.
\end{proof}

\section{The integral Chow ring of the stack of polarized twisted conics}\label{sec:chow P}
This Section is devoted to the study of the integral Chow ring of the stack of polarized twisted conics. The main result is Theorem \ref{prop:CH Lcal}, where the integral Chow ring of $\Pcal$ is computed in terms of generators and relations.
\subsection{Computation of $\CH^*(\Pcal_1)$}\label{subsec:chow P1}
Let $(\pi\colon\Ccal_\Pcal\to\Pcal,\Lcal)$ be the universal polarized twisted conic over $\Pcal$. We denote by $\Lambda_i$ the restriction of $\Lambda$ to $\Pcal_i$, for $i=0,1$.
\begin{prop}\label{prop:CH Lcaluno}
   We have
    \[ \CH^*(\Pcal_1)\simeq \ZZ[\lambda_1,\lambda_2,\delta_1,\xi]/(2\xi,\xi(\xi+\lambda_1)) \]
    where the cycles $\lambda_j$ are the Chern classes of $\Lambda_1$, the cycle $\delta_1$ is the first Chern class of the normal line bundle of $\Pcal_1$ in $\Pcal$, and the cycle $\xi$ is pullback of the generator of $\CH^*(\cB\C_2)$ along the morphism $\Pcal_1\to \cB\C_2$ induced by the isomorphism of Proposition \ref{prop:Lcaluno iso BGamma}.
\end{prop}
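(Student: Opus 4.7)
The strategy is to leverage Proposition~\ref{prop:Lcaluno iso BGamma}, which identifies $\Pcal_1$ with $\cB(\ga^2 \rtimes \Gamma)$, in order to reduce the problem to computing the Chow ring of a classifying stack of a more geometrically transparent group. Since $\ga^2$ is a normal vector subgroup, homotopy invariance of equivariant Chow rings gives $\CH^*(\Pcal_1) \simeq \CH^*(\cB\Gamma)$. Next, I will apply the reparametrization $(a,b,c) = (ut^{-1}, vt^{-1}, uv)$ of Lemma~\ref{lm:normal bundle P1} to identify $\Gamma$ with $N \times \gm$, where $N = \gm^2 \rtimes \rC_2$ is the normalizer of the diagonal torus inside $\GL_2$ and the separate $\gm$ factor is the character $c$ which, again by Lemma~\ref{lm:normal bundle P1}, gives the normal bundle of $\Pcal_1$ in $\Pcal$; this yields $\CH^*(\cB\Gamma) \simeq \CH^*(\cB N)[\delta_1]$ with $\delta_1$ the first Chern class of the tautological line bundle on the $\cB\gm$ factor.

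The main step is then to compute $\CH^*(\cB N)$. I will use the presentation $\cB N \simeq [(\PP^2 \setminus C)/\GL_2]$, where $\PP^2 = \PP(\Sym^2 V)$ is the space of binary quadratic forms on the standard $\GL_2$-representation $V$ and $C$ is the Veronese conic of degenerate forms. By excision, $\CH^*(\cB N)$ is the quotient of $\CH^*_{\GL_2}(\PP^2)$, computed via the projective bundle formula, by the image of the pushforward $i_*$ from $C$. The two key pushforwards are $i_*(1) = [C] = 2(\zeta + c_1)$, obtained from the observation that the discriminant is a $\GL_2$-equivariant section of $\Ocal_{\PP^2}(2) \otimes (\det V)^{\otimes 2}$ (here $\zeta = c_1(\Ocal_{\PP^2}(1))$ and $c_1 = c_1(V)$), and $i_*(\eta) = \zeta(\zeta + c_1)$, obtained from $i^*\zeta = 2\eta$ via the projection formula and the torsion-freeness of $\CH^*_{\GL_2}(\PP^2)$ as a $\ZZ[c_1,c_2]$-module. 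Setting $\xi := \zeta + c_1$, the excision relations become $2\xi = 0$ and $\xi(\xi + c_1) = 0$, and the projective bundle relation for $\PP^2$ reduces, after using $\zeta^2 = -c_1\zeta$, to $4c_2\xi = 0$, which is automatic. This yields $\CH^*(\cB N) \simeq \ZZ[c_1, c_2, \xi]/(2\xi, \xi(\xi + c_1))$.

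To conclude, I will identify the generators with those in the statement. From the weight computation in \S\ref{subsec:two-vector-bundles}, the $N$-representation $\Lambda_1$ has weights $(a^{-1}, b^{-1})$, so $\Lambda_1 \simeq V^\vee$; this gives $c_1(V) = -\lambda_1$ and $c_2(V) = \lambda_2$, and using $2\xi = 0$ the relation $\xi(\xi + c_1) = 0$ becomes $\xi(\xi + \lambda_1) = 0$. A short argument comparing $\CH^1(\cB N)$ and $\CH^1(\cB(\gm^2))$ identifies $\xi$ with the pullback of the generator of $\CH^*(\cB\rC_2)$ along $\cB N \arr \cB\rC_2$: both classes are $2$-torsion, both vanish under pullback to $\cB(\gm^2)$, and the kernel of this pullback in degree one is precisely the $\ZZ/2$ generated by $\xi$. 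Adjoining the $\delta_1$-variable from the $\cB\gm$ factor produces the stated presentation. The main technical difficulty is the excision computation, in particular obtaining the correct integer coefficients in $[C]$ and $i_*(\eta)$, which rests on the equivariant description of the discriminant.
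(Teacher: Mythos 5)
Your proposal is correct, and it reaches the presentation of $\CH^*(\Pcal_1)$ by a genuinely different route for the key step. You share with the paper the initial reduction $\CH^*(\Pcal_1)\simeq\CH^*(\cB\Gamma)\simeq\CH^*(\cB(\Gm^2\rtimes\C_2))[\delta_1]$ via the splitting of $\Gamma$ extracted from Lemma~\ref{lm:normal bundle P1}. But where the paper then invokes the general structure theorem for normalizers of maximal tori, \cite{VisPGLp}*{Proposition 6.4.(c)}, to get $\CH^*(\cB(\Gm^2\rtimes\C_2))\simeq\CH^*(\cB\GL_2)[\xi]/(2\xi,\xi\phi(c_1))$ and then identifies $\phi(c_1)=\lambda_1+\xi$ by restricting to $\cB\C_2$, you compute $\CH^*(\cB(\Gm^2\rtimes\C_2))$ directly from the presentation $[(\PP(\Sym^2 V)\setminus C)/\GL_2]$ by excising the Veronese conic. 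I checked your two pushforwards: the discriminant is indeed an equivariant section of $\Ocal_{\PP^2}(2)\otimes(\det V)^{\otimes 2}$, giving $[C]=2(\zeta+c_1)$, and $i_*\eta=\zeta(\zeta+c_1)$ follows from $i^*\zeta=2\eta$ together with freeness of $\CH^*_{\GL_2}(\PP^2)$ over $\ZZ[c_1,c_2]$; the cubic projective-bundle relation does reduce to $4c_2\xi=2c_2\cdot 2\xi=0$ modulo the other two. Your approach is more self-contained (it avoids the transfer/norm formalism of \cite{VisPGLp}, which the paper however re-uses in Corollary~\ref{cor:nu} and in Section~\ref{sec:concrete}), at the cost of carrying out the explicit equivariant geometry of the discriminant; the paper's route is shorter but outsources the hard content to the cited proposition.

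One small gap: your identification of $\xi=\zeta+c_1$ with the pullback of the generator $t$ of $\CH^1(\cB\C_2)$ only shows that this pullback lies in the kernel of restriction to $\cB\Gm^2$, which is $\{0,\xi\}$; you still need to rule out that it is zero. This is immediate because the surjection $\Gm^2\rtimes\C_2\arr\C_2$ splits, so the composite $\cB\C_2\arr\cB(\Gm^2\rtimes\C_2)\arr\cB\C_2$ is the identity and the pullback of $t$ restricts back to $t\neq 0$. You should also state explicitly which embedding $N\hookrightarrow\GL_2$ you use for the Veronese model, since your sign bookkeeping ($c_1(V)=-\lambda_1$ from $\Lambda_1\simeq V^\vee$) depends on it; as written it is consistent, and the two relations $\xi(\xi-\lambda_1)$ and $\xi(\xi+\lambda_1)$ agree modulo $2\xi$.
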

\begin{proof}
In the following, we adopt the same notation used in the proof of Proposition \ref{prop:Lcaluno iso BGamma}.
In particular, recall that $\overline{\Gamma}:=\Gm^3\rtimes \C_2$, with $\C_2$ switching the first and the second factor, and that $\Gamma:=\left(\Gm/\mmu_2\right)^3\rtimes \C_2$.

Observe that $\overline{\Gamma}$ is abstractly isomorphic to $\Gamma$, where the isomorphism is induced by the homomorphism $\overline{\Gamma}\to \overline{\Gamma}$ defined as
\[((u,v,t,\sigma) \longmapsto (ut^{-1},vt^{-1}, uv,\sigma)\]
whose kernel is exactly $\mmu_{2}$.

Moreover, by \cite{MolVis}*{Lemma 2.3} we have 
\[\CH^*(\cB\left(\ga^2\rtimes\Gamma\right))\simeq\CH^*(\cB\Gamma)\simeq \CH^*(\cB(\Gm^{ 2}\rtimes \C_2)\times\cB\Gm)\simeq \CH^*(\cB(\Gm^{ 2}\rtimes \C_2))[\delta_1]\]
where $\delta_1$ is the first Chern class of the rank one representation determined by the projection on the third copy of $\Gm$, i.e. the one where $\Gamma$ acts by multiplication by $uv$.

By Lemma \ref{lm:normal bundle P1} we know that the normal bundle of $\Pcal_1$ in $\Pcal$ is the quotient by $\Ga^2\rtimes\Gamma$ of the rank one representation associated to the character $uv$. Therefore, the cycle $\delta_1$ coincides with the first Chern class of the normal bundle.

We have to compute $\CH^*(\cB((\Gm^{2}\rtimes \C_2))$, for which we apply \cite{VisPGLp}*{Proposition 6.4.(c)}: this tells us that there exists a ring homomorphism
\[ \phi\colon\CH^*(\cB\GL_2)\longrightarrow\CH^*(\cB(\Gm^{2}\rtimes \C_2))\]
that makes $\CH^*(\cB((\Gm^{2}\rtimes \C_2))$ into a $\CH^*(\cB\GL_2)$-algebra.


Moreover, by \cite{VisPGLp}*{Proposition 6.4.(c)} we have:
\[ \CH^*(\cB((\Gm^{2}\rtimes \C_2)))\simeq \CH^*(\cB\GL_2)[\xi]/I\simeq \ZZ[\phi(c_1),\phi(c_2),\xi]/I \]
where $\xi$ is the first Chern class of the sign representation induced by the projection on $\C_2$.

The ideal of relations $I$ is generated by $2\xi$ and by $\xi\phi(\eta)$ for each $\eta$ in the kernel of the pullback morphism
\begin{equation}\label{eq:pullback}\CH^*(\cB\GL_2)\simeq\ZZ[c_1,c_2]\longrightarrow\CH^*(\cB\mmu_{2})\simeq \ZZ[\xi]/(2\xi) \end{equation}
induced by the homomorphism $\mmu_{2}\hookrightarrow \GL_2$. The pullback of the standard representation of $\GL_2$ along $\cB\mmu_{2}\to \cB\GL_2$ is the direct sum of two copies of the sign representation, hence $c_1\mapsto 0$ and $c_2\mapsto \xi^2$. Henceforth, the kernel of this morphism of rings (\ref{eq:pullback}) is generated by $c_1$, thus $I=(2\xi,\xi\phi(c_1))$.

As in the proof of Proposition \ref{prop:Lcaluno iso BGamma} let $D$ be the union of two copies of $\PP^1$, glued in $0$, and let $C$ be the twisted conic obtained from $[D/\mmu_2]$ after rigidifying the two points at the infinity. Let $L$ be the standard polarization on $C$, and set $M:=f^*L$, the pullback of $L$ along $f\colon D\to C$.
By definition of $\Lambda$ we have:
\[ \Lambda(C,L)=\H^0(C,L)=\H^0(D,M)^{\mmu_{2}} \]
where $\mmu_{2}\subset \overline{\Gamma}$ is the usual kernel of $\overline{\Gamma}\to\Gamma$. We can describe $\H^0(D,M)$ as the subspace of $\H^0(\PP^1,\Ocal(0))^{\oplus 2}$ generated by those elements $s\oplus t$ such that $s(0)=t(0)$.

If $X_0$ and $X_1$ denote the standard global sections of $\Ocal(0)$, then $\mmu_{2}$ acts by sending $X_1$ to $-X_1$ and leaving $X_0$ unchanged. Therefore a base for $\H^0(D,M)^{\mmu_{2}}$ is given by $X_0\oplus (0)$ and $(0)\oplus X_0$. In particular, the $\Gamma$-representation given by $\Lambda(C,L)$ coincides with the standard rank two representation $V$ of $\Gm^{ 2}\rtimes C_2$, as defined in \cite{VisPGLp}*{pg. 19}.

Let $\lambda_i=c_i(\Lambda)$. Then by construction (see rule (a) in the proof of \cite{VisPGLp}*{Proposition 6.4}) we have that $\phi(c_2)=\lambda_2$. We claim that $\phi(c_1)=\lambda_1+\xi$. Once we verify this statement, we are done.

By \cite{VisPGLp}*{Lemma 6.5} this can be verified by checking that $(\lambda_1+\xi)$ is sent to zero by the ring homomorphism
\[ \CH^*(\cB\left(\Gm^{ 2}\rtimes \C_2\right))\longrightarrow \CH^*(\cB\C_2) \]
induced by the inclusion $\C_2\hookrightarrow \Gm^{ 2}\rtimes \C_2$. The standard representation $V$, regarded as a $\C_2$-representation, splits into the direct sum of the trivial representation and the sign representation. Therefore, $\lambda_1$ is sent to $\xi$, which implies that $\lambda_1+\xi\mapsto 0$.
\end{proof}
The following Corollary will be needed in Section \ref{sec:concrete} and it can be skipped at first reading.
\begin{cor}\label{cor:nu}
    Consider the pushforward morphism 
    \[\nu_*\colon\CH^*(\cB\Gm^{2})\simeq\ZZ[\alpha,\beta]\longrightarrow\CH^*(\cB\Gm^{2}\rtimes \C_2)\]
    induced by the finite morphism $\nu\colon\cB\Gm^{2}\to \cB(\Gm^{2}\rtimes \C_2)$. Then we have
    \begin{align*}
        \nu_*1&=2 \\
        \nu_*\alpha=\nu_*\beta&=\lambda_1+\xi.
    \end{align*}
\end{cor}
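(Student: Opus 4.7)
The plan is to view $\nu\colon \cB\Gm^2 \to \cB(\Gm^2\rtimes\C_2)$ as a representable finite étale Galois cover of degree~$2$, with non-trivial deck involution $\sigma\colon\cB\Gm^2\to\cB\Gm^2$ induced by the conjugation action of $\C_2$ on $\Gm^2$; this swaps the two factors, so $\sigma^*\alpha = \beta$. I would then determine $\nu_*$ on $\CH^{\leq 1}$ by combining three ingredients: (a) the degree calculation for $\nu_*1$, (b) Galois descent $\nu^*\nu_* = \id + \sigma^*$, and (c) the projection formula applied with the class $\xi$.

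For the degree zero part, since $\nu$ is representable and finite of degree $2$, one has $\nu_*1 = 2$ at once. For the degree one part, I would first identify $\nu^*$: the standard representation $V$ of $\Gm^2\rtimes\C_2$ restricts to $\Gm^2$ as the sum of the characters of weight $(1,0)$ and $(0,1)$, so $\nu^*\lambda_1 = \alpha+\beta$, while the sign character of $\Gm^2\rtimes\C_2$ is trivial on $\Gm^2$, giving $\nu^*\xi = 0$. Since $\sigma^*\alpha = \beta$, Galois descent yields $\nu^*\nu_*\alpha = \alpha+\beta = \nu^*\lambda_1$. As the kernel of $\nu^*$ in codimension~$1$ is exactly $(\ZZ/2)\xi$, this forces $\nu_*\alpha = \lambda_1 + c\xi$ for some $c\in\{0,1\}$.

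To pin down $c$, I would invoke the projection formula: using $\nu^*\xi=0$,
\[
\xi\cdot\nu_*\alpha \;=\; \nu_*(\nu^*\xi\cdot\alpha) \;=\; 0.
\]
Substituting $\nu_*\alpha = \lambda_1 + c\xi$ and applying the defining relation $\xi(\lambda_1+\xi)=0$ transforms this into $(c-1)\xi^2 = 0$. To exclude $c=0$ I need $\xi^2 \neq 0$, which follows by pulling back along $\cB\C_2\hookrightarrow\cB(\Gm^2\rtimes\C_2)$ induced by $\C_2\hookrightarrow\Gm^2\rtimes\C_2$: there $\xi$ maps to the generator $t$ of $\CH^*(\cB\C_2)=\ZZ[t]/(2t)$, and $t^2$ is a nonzero $2$-torsion class. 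Hence $c = 1$ and $\nu_*\alpha = \lambda_1 + \xi$. Finally, since $\nu\circ\sigma = \nu$ one has $\nu_*\sigma^* = \nu_*$, so $\nu_*\beta = \nu_*\sigma^*\alpha = \nu_*\alpha = \lambda_1+\xi$.

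The only delicate step is (c): the Galois descent computation determines $\nu_*\alpha$ only modulo the $2$-torsion class $\xi$ sitting in the kernel of $\nu^*$, so distinguishing $\lambda_1$ from $\lambda_1+\xi$ requires genuinely extra information. I expect this to be the main obstacle, and the key insight is that the projection formula together with the non-vanishing of $\xi^2$ in $\CH^2(\cB(\Gm^2\rtimes\C_2))$ suffices to close the ambiguity.
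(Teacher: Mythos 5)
Your argument is correct, and it reaches the conclusion by a genuinely different route from the paper. The paper's proof is essentially a citation: it identifies $\nu_*$ with the transfer morphism of \cite{VisPGLp}*{p.~22}, which yields $\nu_*\alpha=\nu_*\beta=\phi(\alpha+\beta)=\phi(c_1)$ directly, and then quotes the identity $\phi(c_1)=\lambda_1+\xi$ already established in the proof of Proposition~\ref{prop:CH Lcaluno} (itself obtained by restricting to $\cB\C_2$ via \cite{VisPGLp}*{Lemma 6.5}). You instead work from first principles: the Galois double-cover identity $\nu^*\nu_*=\id+\sigma^*$ (valid here since $\Gm^2$ is normal in $\Gm^2\rtimes\C_2$, so $\nu$ is a representable \'{e}tale $\C_2$-torsor, and the identity passes to equivariant Chow groups through the Edidin--Graham approximations) pins down $\nu_*\alpha$ modulo $\ker\nu^*\cap\CH^1=(\ZZ/2)\xi$, and you remove the residual $2$-torsion ambiguity with the projection formula $\xi\cdot\nu_*\alpha=\nu_*(\nu^*\xi\cdot\alpha)=0$ combined with $\xi^2\neq 0$; the latter you verify by restriction to $\cB\C_2$, though it can also be read off from the presentation in Proposition~\ref{prop:CH Lcaluno}. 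Both arguments ultimately rely on that presentation and on a restriction to $\cB\C_2$, but yours avoids any appeal to the transfer formalism of \cite{VisPGLp} and makes completely explicit where the torsion correction $\xi$ comes from, at the cost of being longer; the paper's version is shorter only because the identity $\phi(c_1)=\lambda_1+\xi$ had already been paid for. You are also right that the delicate point is exactly the $\xi$-ambiguity: Galois descent alone cannot distinguish $\lambda_1$ from $\lambda_1+\xi$, and your use of the projection formula plus the nonvanishing of $\xi^2$ is a clean way to close it.
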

\begin{proof}
    The morphism $\nu$ is finite of degree $2$, hence $\nu_*1=2$. What we call $\nu_*$ here is the transfer morphism of \cite{VisPGLp}*{ pg.22}, and hence we have
    \[ \nu_*\alpha=\nu_*\beta=\phi(\alpha+\beta)=\phi(c_1). \]
    The morphism $\phi$ above was introduced in the proof of Proposition \ref{prop:CH Lcaluno}, where we also showed that $\phi(c_1)=\lambda_1+\xi$. This concludes the proof.
\end{proof}

\subsection{Computation of $\CH^*(\Pcal)$}
We are ready to prove the main result of the Section.
\begin{prop}\label{prop:CH Lcal}
   We have
   \[ \CH^*(\Pcal)\simeq \ZZ[\lambda_1,\lambda_2,\delta_1,\eta]/(2\eta,\eta(\lambda_1\delta_1+\eta)) \]
   where the cycles $\lambda_j$ are the Chern classes of $\Lambda$, the cycle $\delta$ is the class of $\Pcal_1$ and $\eta$ is the pushforward of $\xi$ along $i\colon\Pcal_1\hookrightarrow \Pcal$. Moreover, we have that $i_*\xi^2=\lambda_1\eta$.
 \end{prop}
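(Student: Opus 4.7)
The plan is to apply the patching technique described in the introduction, using the localization exact sequence associated with the closed immersion $i\colon \cP_1 \hookrightarrow \cP$ and its open complement $j\colon \cP_0 \hookrightarrow \cP$. Since $\cP_0 \simeq \cB\GL_2$ (Proposition \ref{prop:Lcalzero iso BGLtwo}) we have $\CH^*(\cP_0) = \ZZ[\lambda_1,\lambda_2]$, and the computation of $\CH^*(\cP_1)$ in Proposition \ref{prop:CH Lcaluno} gives $\CH^*(\cP_1) \simeq \ZZ[\lambda_1,\lambda_2,\delta_1,\xi]/(2\xi,\xi(\xi+\lambda_1))$. The first key step is to promote the usual right-exact localization sequence to a short exact sequence
\[
0 \to \CH^{*-1}(\cP_1) \xrightarrow{\,i_*\,} \CH^*(\cP) \xrightarrow{\,j^*\,} \CH^*(\cP_0) \to 0.
\]
Injectivity of $i_*$ follows from the self-intersection formula $i^*i_*\alpha = \delta_1\cdot\alpha$, together with the observation that $\delta_1$ is not a zero divisor in $\CH^*(\cP_1)$: as a $\ZZ[\lambda_1,\lambda_2,\delta_1]$-module the latter decomposes as $\ZZ[\lambda_1,\lambda_2,\delta_1]\oplus (\ZZ/2)[\lambda_1,\lambda_2,\delta_1]\cdot\xi$, and multiplication by $\delta_1$ is clearly injective on each summand.

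Next, I would identify generators. The classes $\lambda_1,\lambda_2$ are Chern classes of $\Lambda$, defined on all of $\cP$ and restricting on $\cP_1$ to the classes with the same name. The class $\delta_1 := [\cP_1] = i_*(1)$ restricts on $\cP_1$ to $c_1(\Ncal_{\cP_1}) = \delta_1$. Finally, set $\eta := i_*(\xi)$. Using the projection formula and the identification $i^*\lambda_j = \lambda_j$, $i^*\delta_1 = \delta_1$, I would observe that for any polynomial $P \in \ZZ[\lambda_1,\lambda_2,\delta_1]$
\[
i_*\bigl(P(\lambda_1,\lambda_2,\delta_1)\bigr) = \delta_1\cdot P(\lambda_1,\lambda_2,\delta_1), \qquad i_*\bigl(\xi\cdot P(\lambda_1,\lambda_2,\delta_1)\bigr) = \eta\cdot P(\lambda_1,\lambda_2,\delta_1).
\]
Combined with the module decomposition of $\CH^*(\cP_1)$ this shows that the image of $i_*$ is the $\ZZ[\lambda_1,\lambda_2,\delta_1]$-submodule generated by $\delta_1$ and $\eta$, and hence that $\CH^*(\cP)$ is generated as a ring by $\lambda_1,\lambda_2,\delta_1,\eta$.

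Then I would derive the relations. The relation $2\eta = 0$ is immediate from $i_*(2\xi) = 0$. For the second relation, using $i^*\eta = i^*i_*\xi = \delta_1\xi$ and the projection formula,
\[
\eta^2 \;=\; \eta\cdot i_*\xi \;=\; i_*(i^*\eta\cdot\xi) \;=\; i_*(\delta_1\xi^2) \;=\; \delta_1\cdot i_*(-\lambda_1\xi) \;=\; -\lambda_1\delta_1\eta,
\]
which together with $2\eta=0$ gives $\eta(\lambda_1\delta_1 + \eta) = 0$; the same computation, reading $\xi^2 = -\lambda_1\xi = \lambda_1\xi$ in $\CH^*(\cP_1)$, yields the bonus identity $i_*\xi^2 = \lambda_1\eta$. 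This produces a surjective ring homomorphism
\[
\ZZ[\lambda_1,\lambda_2,\delta_1,\eta]/\bigl(2\eta,\;\eta(\lambda_1\delta_1+\eta)\bigr) \twoheadrightarrow \CH^*(\cP).
\]

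The final step is to check that these are \emph{all} the relations. I would argue by an additive comparison: the quotient ring on the left decomposes as an abelian group into $\ZZ[\lambda_1,\lambda_2,\delta_1]\oplus (\ZZ/2)[\lambda_1,\lambda_2,\delta_1]\cdot\eta$, since $\eta^2$ is already $-\lambda_1\delta_1\eta$ and $2\eta=0$. On the other hand the short exact sequence identifies $\CH^*(\cP)$ additively with $\CH^*(\cP_0)\oplus\CH^{*-1}(\cP_1) = \ZZ[\lambda_1,\lambda_2]\oplus\bigl(\ZZ[\lambda_1,\lambda_2,\delta_1]\cdot\delta_1\oplus (\ZZ/2)[\lambda_1,\lambda_2,\delta_1]\cdot\eta\bigr)$, which coincides with the previous group. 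Surjectivity combined with equality of additive ranks in each degree forces the surjection to be an isomorphism. The only delicate point in this whole argument is really the non-zero-divisor check that makes $i_*$ injective; after that, everything is a formal consequence of the projection formula and the additive structure already computed in Proposition \ref{prop:CH Lcaluno}.
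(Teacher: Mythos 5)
Your argument is correct, and its backbone coincides with the paper's: both proofs rest on the observation that $c_1(\Ncal_{\Pcal_1})=\delta_1$ is a non-zero-divisor in $\CH^*(\Pcal_1)$ (using the explicit module decomposition from Proposition \ref{prop:CH Lcaluno}), which makes the localization sequence left-exact, and both extract the generators and the two relations from the projection formula exactly as you do. Where you diverge is in the last step, establishing that $2\eta$ and $\eta(\lambda_1\delta_1+\eta)$ generate the \emph{entire} kernel. The paper packages the left-exact localization sequence into the cartesian square of Lemma \ref{lm:Atiyah} and identifies the ideal of relations by intersecting the relation ideal of $\CH^*(\Pcal_1)$ with the image of $i^*$ (the subring generated by $\lambda_1,\lambda_2,\delta_1,\delta_1\xi$) and checking compatibility with the restriction to $\Pcal_0$. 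You instead compare the candidate quotient with $\CH^*(\Pcal)$ degree by degree as abelian groups, using the additive splitting $\CH^n(\Pcal)\simeq\CH^n(\Pcal_0)\oplus\CH^{n-1}(\Pcal_1)$. This is a legitimate and arguably more self-contained finish; the one point you should state precisely is that you are invoking more than equality of free ranks: the graded pieces carry $2$-torsion, so the argument is that a surjection between \emph{isomorphic} finitely generated abelian groups (same free rank \emph{and} same torsion, both of which you have exhibited explicitly) is an isomorphism, by the Hopfian property. With that sentence added, your proof is complete. The trade-off is that your route depends on having fully explicit additive models of both sides, whereas the fiber-square formulation of Lemma \ref{lm:Atiyah} is the statement that generalizes to the further patching computations alluded to in the introduction.
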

 The proof of Proposition \ref{prop:CH Lcal} relies on the following result.
 \begin{lm}\label{lm:Atiyah}
     Let $X$ be a smooth variety endowed with the action of a group $G$, and let $Y\xhookrightarrow{i} X$ be a smooth, closed and $G$-invariant subvariety, with normal bundle $\Ncal$. Suppose that $c_{\rm top}^G(\Ncal)$ is not a zero-divisor in $\CH^*_G(Y)$. Then the following diagram of rings is cartesian:
     \[\xymatrix{
     \CH^*_G(X) \ar[r]^{i^*} \ar[d]^{j^*} & \CH^*_G(Y) \ar[d]^{q} \\
     \CH^*_G(X\smallsetminus Y) \ar[r]^{p} & \CH^*(Y)/(c_{\rm top}^G(\Ncal))
     }\]
     where the bottom horizontal arrow $p$ sends the class of a variety $V$ to the equivalence class of $i^*\xi$, where $\xi$ is any element in the set $(j^*)^{-1}([V])$.
 \end{lm}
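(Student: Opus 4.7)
The plan is to verify the universal property of the fiber product by exploiting just two ingredients: the equivariant localization exact sequence
\[
\CH^{*}_{G}(Y) \xrightarrow{\;i_{*}\;} \CH^{*}_{G}(X) \xrightarrow{\;j^{*}\;} \CH^{*}_{G}(X\smallsetminus Y) \arr 0
\]
and the self-intersection formula $i^{*}i_{*}\alpha = c^{G}_{\rm top}(\Ncal)\cdot \alpha$. Both are standard for smooth pairs in equivariant Chow theory.

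First I would check that the map $p$ is well defined. Given $[V] \in \CH^{*}_{G}(X\smallsetminus Y)$, two lifts $\xi,\xi'$ to $\CH^{*}_{G}(X)$ differ by something in $\ker j^{*} = \im i_{*}$, say $\xi - \xi' = i_{*}\alpha$; then by the self-intersection formula $i^{*}(\xi - \xi') = c^{G}_{\rm top}(\Ncal)\cdot \alpha$, which is zero in the quotient. Commutativity of the square is immediate from the very definition of $p$.

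Next, I would verify that the induced comparison map
\[
\Phi\colon \CH^{*}_{G}(X) \arr \CH^{*}_{G}(Y) \times_{\CH^{*}_{G}(Y)/(c^{G}_{\rm top}(\Ncal))} \CH^{*}_{G}(X\smallsetminus Y)
\]
is an isomorphism. For injectivity: if $\xi \in \ker\Phi$ then $j^{*}\xi = 0$, so by localization $\xi = i_{*}\alpha$ for some $\alpha \in \CH^{*}_{G}(Y)$; then $0 = i^{*}\xi = c^{G}_{\rm top}(\Ncal) \cdot \alpha$, and the non-zero-divisor hypothesis forces $\alpha = 0$, hence $\xi = 0$. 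For surjectivity: given a compatible pair $(\beta,\gamma)$, lift $\gamma$ to some $\xi_{0} \in \CH^{*}_{G}(X)$ via surjectivity of $j^{*}$; then $i^{*}\xi_{0} - \beta$ maps to zero in $\CH^{*}_{G}(Y)/(c^{G}_{\rm top}(\Ncal))$ by the compatibility condition, so it equals $c^{G}_{\rm top}(\Ncal)\cdot \alpha$ for some $\alpha$. The corrected lift $\xi := \xi_{0} - i_{*}\alpha$ then satisfies $j^{*}\xi = \gamma$ (since $j^{*}i_{*} = 0$) and $i^{*}\xi = i^{*}\xi_{0} - c^{G}_{\rm top}(\Ncal)\cdot\alpha = \beta$, as required.

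The argument is essentially formal, so there is no deep obstacle: the whole content of the lemma is concentrated in the non-zero-divisor hypothesis, which is used twice, once to make $p$ well defined and once to force injectivity of $\Phi$. The real work in applications lies in verifying that hypothesis in concrete situations; the lemma itself is a clean consequence of localization plus self-intersection.
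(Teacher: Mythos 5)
Your proof is correct and follows essentially the same route as the paper's: well-definedness of $p$ via the self-intersection formula, then injectivity and surjectivity of the comparison map using the localization exact sequence together with the non-zero-divisor hypothesis. The only cosmetic differences are that the paper first reduces to the non-equivariant case by equivariant approximation whereas you invoke the equivariant versions of localization and self-intersection directly, and that your surjectivity argument (correcting the lift by $i_{*}\alpha$) is actually spelled out more explicitly than in the paper.
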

 \begin{proof}
 It is enough to prove the result in the case of schemes, i.e. assuming $G=\{{\rm id}\}$. The equivariant case follows by applying the standard equivariant approximation technique, as in \cite{EG}.
 
 First we verify that $p$ is well defined: given two elements $\xi$ and $\xi'$ in the set $(j^*)^{-1}([V])$, then the localization exact sequence shows that $\xi-\xi'=i_*\eta$ for some $\eta$ in $\CH^*(Y)$. This implies that 
 \[p(i^*\xi - i^*\xi')=p( i^*i_*\eta )=p(c_{\rm top}(\Ncal)\eta)=0, \]
 so $p$ is well defined.
 
 Consider a subvariety $V\subset X$. Then $p(j^*[V])=q(i^*[v])$ by construction, therefore there is a map from $\CH^*(X)$ to the cartesian product of the two rings. Observe that the localization exact sequence
 \[ \CH_*(Y) \longrightarrow \CH_*(X) \longrightarrow \CH_*(X\smallsetminus Y) \longrightarrow 0 \]
 is exact on the left too: indeed, if $i_*\xi=0$, then $0=i^*i_*\xi=c_{\rm top}(\Ncal)\xi$, which implies $\xi=0$. This immediately implies that the map from $\CH^*(X)$ to the cartesian product of the two rings is both injective and surjective, thus finishing the proof.
 \end{proof}
 
 \begin{proof}[Proof of Proposition \ref{prop:CH Lcal}]
 First let us recall that $\Pcal_0\simeq\cB\GL_2$ (Proposition \ref{prop:Lcalzero iso BGLtwo}), hence we know the Chow ring of this open substack. We also computed the Chow ring of $\Pcal_1$ in Proposition \ref{prop:CH Lcaluno}.
 
 Consider the closed immersion of smooth algebraic stacks $i\colon\Pcal_1\hookrightarrow\Pcal$. Both of them are quotient stacks, and by Proposition \ref{prop:CH Lcaluno} the top Chern class of the normal bundle of $\Pcal_1$ in $\Pcal$ is not a zero divisor, hence we can apply Lemma \ref{lm:Atiyah}. We get the following cartesian diagram of rings
 \begin{equation}\label{eq:diagram atiyah}\xymatrix{
 \CH^*(\Pcal) \ar[r] \ar[d] & \ZZ[\lambda_1,\lambda_2,\xi,\delta]/(2\xi,\xi(\lambda_1+\xi)) \ar[d] \\
 \ZZ[\lambda_1,\lambda_2] \ar[r] & \ZZ[\lambda_1,\lambda_2,\xi]/(2\xi,\xi(\lambda_1+\xi))
 }\end{equation}
 and the localization short exact sequence
 \[ 0\longrightarrow \ZZ[\lambda_1,\lambda_2,\xi,\delta]/(2\xi,\xi(\lambda_1+\xi)) \longrightarrow \CH^*(\Pcal) \longrightarrow \ZZ[\lambda_1,\lambda_2] \longrightarrow 0, \]
where all the morphisms involved are morphisms of $\CH^*(\cB\GL_2)$-modules. 
From this we deduce that $\CH^*(\Pcal)$ is generated as a ring by $\lambda_1$, $\lambda_2$, $\delta_1:=i_*1$ and $i_*\xi^k$ for $k>0$. Actually $i_*\xi^k=i_*\xi\lambda_1^{k-1}$ for $k>1$, hence we only need $\eta:=i_*\xi$ to generate $\CH^*(\Pcal)$ as a ring.

To compute the relations, we have to check what polynomials in the generators above are sent to zero by both the top horizontal and the left vertical morphism in the diagram (\ref{eq:diagram atiyah}). The pullback morphism $i^*$ works as follows:
\begin{align*}
    &i^*\lambda_i=\lambda_i \\
    &i^*\delta_1=\delta_1 \\
    &i^*\eta=i^*i_*\xi=\delta_1\xi.
\end{align*}
Therefore, the image of $i^*$ in $\CH^*(\Pcal_1)$ is the subring generated by $\lambda_1$, $\lambda_2$, $\delta_1$ and $\delta_1\xi$, the ideal of relations can be deduced by looking at the intersection of the ideal of relations of $\CH^*(\Pcal_1)$ with the subring $\im(i_*))$. We deduce that the ideal of relations in the subring is generated by
\[ 2\delta_1\xi, \quad \delta_1\xi(\lambda_1\delta_1 + \delta_1\xi). \]
The polynomials whose pullback coincides with one of the generators above are respectively
\[ 2\eta, \quad \eta(\lambda_1\delta_1+\eta) \]
and both of them are in the kernel of the pullback $\CH^*(\Pcal)\to\CH^*(\Pcal_0)$. This implies that these two elements generate the ideal of relations in $\CH^*(\Pcal)$. Moreover, from the projection formula we have 
\[ i_*\xi^2=i_* \lambda_1\xi=\lambda_1\cdot i_*\xi =\lambda_1\eta. \]
This concludes the proof.
 \end{proof}
 
\section{Relations in the Chow ring of $\overline{\Mcal}_2$: abstract characterization}\label{sec:abstract}
The purpose of this Section is to give a first description of the integral Chow ring of $\overline{\Mcal}_2$ in terms of generators and relations, which looks as follows (Corollary \ref{cor:Chow M2bar abs}) :
\[ \CH^*(\overline{\Mcal}_2) \simeq \ZZ[\lambda_1,\lambda_2,\delta_1]/(2\delta_1(\lambda_1+\delta_1),\delta_1^2(\lambda_1+\delta_1),[\Dcal^{3}],\zeta, [\Dcal^{3,3}]),  \]
where the $\lambda_i$ are the Mumford lambda classes and $\delta_1$ is the boundary divisor $[\Delta_1]$.
Observe that some relations are not explicitly computed and instead they are formulated in terms of fundamental classes of closed substacks in $\Vcal$. More precisely, the stacks $\Dcal^3$ and $\Dcal^{3,3}$ appearing in the formula above are the ones introduced respectively in Definition \ref{def:stack D3} and \ref{def:stack D33}, whereas $\zeta$ is the pushforward of a certain first Chern class along $\Dcal_3\hookrightarrow\Vcal$.

In Section \ref{sec:concrete} we will complete the computation of the integral Chow ring of $\overline{\Mcal}_2$ by expressing all the relations in terms of the classes $\lambda_1$, $\lambda_2$ and $\delta_1$.

\subsection{Relations coming from $\Dcal^3$}\label{subsec:relations from W}
Recall that the stack $\Ccal$ is the universal curve over $\Rcal$ (Definition \ref{def:stack C}). Its objects are pairs $(C\to S,p)$, where $C\to S$ is a family of twisted conics, and $p:S\to C$ is a section of $C\to S$. Observe that the morphism of stacks $\Ccal\to \Rcal$ is non-representable. The smooth locus of $\Ccal\to\Rcal$ is denoted $\Ccal^{\rm sm}$ and in particular we have that $\Ccal^{\rm sm}\to \Rcal$ is representable, but not proper.

The restriction of $\Ccal$ (resp. $\Ccal^{\rm sm}$) to $\Rcal_i$ is denoted $\Ccal_i$ (resp. $\Ccal^{\rm sm}_i$). Moreover, the pullback of $\Ccal$ (resp. $\Ccal^{\rm sm}$) to $\Vcal$ is denoted $\Ccal_{\Vcal}$ (resp. $\Ccal^{\rm sm}_\Vcal$). We adopt a similar notation of the pullback of $\Ccal$ to $\Vcal_i$, $\Vcal\smallsetminus \Zcal$, etc., where $\Zcal$ is the closed substack introduced in Definition \ref{def:stack Z}.

Recall also that the universal polarized twisted conic $\pi\colon\Ccal_\Pcal\to\Pcal$ has a universal polarization $\Lcal$ and that in \ref{subsec:chow P1} we set $\Lambda:=\pi_*\Lcal$. In particular, there is a surjective morphisms of vector bundles $\pi^*\Lambda\to\Lcal$ over $\Ccal_{\Pcal}$.

We denote by $\Wcal$ the closed substack of $\Ccal_{\Vcal\smallsetminus \Zcal}^{\rm sm}$ whose objects are quadruples $(C\to S, L, s, p)$ such that $s$ vanishes on the image of the section $p:S\to C$ with order $\geq 3$. This stack has been already introduced in \ref{subsec:closed substacks} when we were defining $\Dcal^3$.

The morphism $f\colon\Wcal\to \Vcal\smallsetminus\Zcal$ factorizes through $\Dcal^3$, and it is actually an isomorphism over the complement of $\Dcal^{3,3}$.
\begin{prop}\label{pr:Wcal}
   We have:
   \begin{enumerate}
       \item The morphism $f\colon\Wcal\to\Vcal\smallsetminus\Zcal$ is representable and proper.
       \item Let $\alpha$ be the first Chern class of the pullback of the line bundle $\Lcal$ along $\Wcal\to\Ccal_{\Pcal}$. Then the image of the pushforward morphism \[f_*\colon\CH^*(\Wcal)\longrightarrow\CH^*(\Vcal\smallsetminus\Zcal)\]
       is generated as an ideal by $f_*1$ and $f_*\alpha$.
   \end{enumerate}
\end{prop}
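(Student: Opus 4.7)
For part (1), representability is immediate: since $\Ccal^{\rm sm}\to\Rcal$ is representable, so is its base change $\Ccal^{\rm sm}_{\Vcal\smallsetminus\Zcal}\to\Vcal\smallsetminus\Zcal$, and $\Wcal$ sits inside this as a closed substack. The subtler point is properness, because $\Ccal^{\rm sm}\to\Rcal$ is not proper. The key observation is that $\Wcal$ is actually already closed in the \emph{full} family $\Ccal_{\Vcal\smallsetminus\Zcal}$: indeed, by the very definition of $\Zcal$, on any object of $\Vcal\smallsetminus\Zcal$ the universal section $s$ does not vanish along the singular locus $\Sigma_{\Ccal}$, so the vanishing locus of $s$ is entirely contained in the smooth locus, and in particular $\Wcal$ does not meet $\Sigma_{\Ccal}$. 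Since $\Ccal\to\Rcal$ is proper, its base change $\Ccal_{\Vcal\smallsetminus\Zcal}\to\Vcal\smallsetminus\Zcal$ is proper, and hence so is $f$.

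For part (2), the projection formula immediately gives that $f_*(\CH^*(\Wcal))$ is an ideal in $\CH^*(\Vcal\smallsetminus\Zcal)$, so it suffices to show that $\CH^*(\Wcal)$ is generated as a $\CH^*(\Vcal\smallsetminus\Zcal)$-module (via $f^*$) by $1$ and $\alpha$. Recall from \S\ref{subsec:closed substacks} that $\Wcal$ is an open substack of the rank-$4$ vector bundle $\overline\Wcal$ over $\Ccal^{\rm sm}_\Pcal$ associated to the subsheaf $\pr_{1*}(\pr_2^*\cF\otimes\cI_{\Delta_{\Ccal/\Pcal}}^3)$ of $\pi^*\Vcal$. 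Homotopy invariance gives $\CH^*(\overline\Wcal)\cong\CH^*(\Ccal^{\rm sm}_\Pcal)$, and excision exhibits $\CH^*(\Wcal)$ as a quotient of this ring. Thus the problem reduces to the following generation claim: \emph{$\CH^*(\Ccal^{\rm sm}_\Pcal)$ is generated as a $\CH^*(\Pcal)$-module by $1$ and $\alpha$.}

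To prove the generation claim, I would use the localization sequence
\[
\CH^*(\Ccal^{\rm sm}_{\Pcal_1})\xrightarrow{i_*}\CH^*(\Ccal^{\rm sm}_\Pcal)\xrightarrow{j^*}\CH^*(\Ccal_{\Pcal_0})\to 0.
\]
Over the open stratum $\Ccal_{\Pcal_0}=\PP(\Lambda_0)$, Grothendieck's projective bundle formula gives $\CH^*(\Ccal_{\Pcal_0})=\CH^*(\Pcal_0)\cdot 1\oplus \CH^*(\Pcal_0)\cdot \alpha_0$, and since $\CH^*(\Pcal)\to\CH^*(\Pcal_0)$ is surjective, it suffices to show that $\im(i_*)$ lies in the $\CH^*(\Pcal)$-submodule of $\CH^*(\Ccal^{\rm sm}_\Pcal)$ generated by $1$ and $\alpha$. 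Using the presentations of \S\ref{subsec:automorphism-polarized} and Proposition \ref{prop:Lcaluno iso BGamma}, one identifies $\Ccal^{\rm sm}_{\Pcal_1}\to\Pcal_1$, after collapsing the Chow-trivial $\ga^2$-factor, with the degree-$2$ \'{e}tale cover $\cB(\gm^{3}/\mmu_{2})\to\cB((\gm^{3}/\mmu_{2})\rtimes\rC_{2})$. An explicit equivariant calculation using the $\gm^{3}$-weights of $\Lcal$ recorded in \S\ref{subsec:automorphism-polarized} then shows that $\CH^*(\Ccal^{\rm sm}_{\Pcal_1})$ is generated as a $\CH^*(\Pcal_1)$-module by $1$ and $i^*\alpha$. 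For any $c=\pi_1^*\tilde a+\pi_1^*\tilde b\cdot i^*\alpha$ with $\tilde a,\tilde b\in\CH^*(\Pcal_1)$, the projection formula together with the base-change identity $\pi^*i_{\Pcal *}=i_*\pi_1^*$ for the cartesian square $\Ccal^{\rm sm}_{\Pcal_1}\hookrightarrow\Ccal^{\rm sm}_\Pcal$ over $\Pcal_1\hookrightarrow\Pcal$ yields $i_*c=\pi^*(i_{\Pcal *}\tilde a)+\pi^*(i_{\Pcal *}\tilde b)\cdot\alpha$, which lies in the required submodule.

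The main obstacle I anticipate is the last of these identifications: establishing that $\CH^*(\Ccal^{\rm sm}_{\Pcal_1})$ is generated over $\CH^*(\Pcal_1)$ by $1$ and $i^*\alpha$. This amounts to an explicit equivariant Chow calculation with the singular twisted conic and the character of its polarization; the other steps --- representability, properness via the closed-in-$\Ccal$ trick, the vector-bundle description of $\Wcal$, and the localization argument --- should be relatively routine.
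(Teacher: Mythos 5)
Your proof is correct. Part (1) is essentially the paper's own argument: properness is obtained by noting that $\Wcal$ lies in the vanishing locus $\Scal$ of the universal section, which over $\Vcal\smallsetminus\Zcal$ is closed in the proper family $\Ccal_{\Vcal\smallsetminus\Zcal}$ and is disjoint from $\Sigma_{\Ccal}$, so that $\Wcal$ is in fact closed in $\Ccal_{\Vcal\smallsetminus\Zcal}$.

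For part (2) you take a genuinely different and somewhat lighter route. Both arguments start from the surjection $\CH^*(\Ccal^{\rm sm}_{\Pcal})\to\CH^*(\Wcal)$ coming from the vector-subbundle description, but the paper then computes $\CH^*(\Ccal^{\rm sm}_{\Pcal})$ completely: it identifies $\Ccal_{\Pcal_0}\simeq\cB\B_2$ and $\Ccal^{\rm sm}_{\Pcal_1}\simeq\cB\bigl(\gm\times(\ga\rtimes\gm)\times\gm\bigr)$ and glues the strata with the patching Lemma~\ref{lm:Atiyah} to obtain $\CH^*(\Ccal^{\rm sm}_{\Pcal})\simeq\ZZ[\alpha,\beta,\delta_1]$, from which generation over $\CH^*(\Pcal)$ by $1$ and $\alpha$ (and the recursion $f_*\alpha^n=\lambda_1 f_*\alpha^{n-1}-\lambda_2 f_*\alpha^{n-2}$) is immediate. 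You instead use only the right-exact localization sequence, the projective bundle formula on the open stratum, and the base-change/projection-formula identity $i_*\pi_1^*=\pi^*i_{\Pcal*}$ on the closed one; this avoids both the full ring computation and the non-zero-divisor verification required by Lemma~\ref{lm:Atiyah}. What the paper's version buys is that the step you flag as the crux --- generation of $\CH^*(\Ccal^{\rm sm}_{\Pcal_1})$ over $\CH^*(\Pcal_1)$ by $1$ and $i^*\alpha$ --- becomes transparent: $\CH^*(\Ccal^{\rm sm}_{\Pcal_1})$ is the polynomial ring $\ZZ[\alpha,\beta,\delta_1]$ and the image of $\CH^*(\Pcal_1)$ contains $\alpha+\beta$, $\alpha\beta$ and $\delta_1$. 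One small imprecision in your write-up: $\Ccal^{\rm sm}_{\Pcal_1}\to\Pcal_1$ is not literally the \'{e}tale double cover $\cB(\gm^{3}/\mmu_{2})\to\cB\Gamma$ --- it has one-dimensional fibres, the smooth locus of each component being an $\AA^1$ --- but it factors as an affine bundle over that double cover, so your identification of the pullback on Chow rings is nonetheless valid.
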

\begin{proof}
The inclusion $\Wcal\hookrightarrow\Ccal_{\Vcal\smallsetminus\Zcal}^{\rm sm}$ is representable. The same thing holds for the morphism $\Ccal_{\Vcal\smallsetminus\Zcal}^{\rm sm}\to\Vcal\smallsetminus\Zcal$, because it is the pullback of $\Ccal^{\rm sm}\to\Rcal$ along $\Vcal\smallsetminus\Zcal\to \Rcal$. Therefore, the composition $f\colon\Wcal\to\Vcal\smallsetminus\Zcal$ is representable.

The morphism $\Ccal_{\Vcal}\to\Vcal$ is proper, although not representable. Let $\Scal\subset\Ccal_{\Vcal}$ be the vanishing locus of the universal section of the line bundle $\Lcal_{\Vcal}^{\otimes 2}\otimes \omega_{\Ccal_{\Vcal}/\Vcal}^{\otimes (-2)}$. Then $\Scal\to\Vcal$ is proper, and so it is $\Scal_{\Vcal\smallsetminus\Zcal}\to\Vcal\smallsetminus\Zcal$. Observe that $\Wcal$ is a closed substack of $\Scal_{\Vcal\smallsetminus\Zcal}$: this implies that $f\colon\Wcal\to\Vcal\smallsetminus\Zcal$ is proper too. We have proved (1).

Consider the morphism $\Ccal^{\rm sm}_{\Vcal}\to\Ccal^{\rm sm}_{\Pcal}$: this is a vector bundle, because it is the pullback of $\Vcal\to\Pcal$ to $\Ccal_{\Pcal}$. Moreover $\Wcal$ in $\Ccal^{\rm sm}_{\Vcal}$ is a vector subbundle, as we have seen in \ref{subsec:closed substacks}. By composing together all the induced pullbacks at the level of Chow rings, we obtain the following surjective homomorphism:
\begin{equation}\label{eq:surjective} \CH^*(\Ccal^{\rm sm}_{\Pcal})\longrightarrow \CH^*(\Wcal). \end{equation}

Consider the Cartesian diagram
\[\xymatrix{
\Ccal^{\rm sm}_{\Pcal}\times_{\Pcal}\Ccal^{\rm sm}_{\Pcal} \ar[d]^{\pr_1} \ar[r]^{\quad\pr_2} & \Ccal^{\rm sm}_{\Pcal} \ar[d]^{\pi} \\
\Ccal^{\rm sm}_{\Pcal} \ar[r]^{\pi} & \Pcal
}\]
Let $\Delta$ be the diagonal in $\Ccal^{\rm sm}_{\Pcal}\times_{\Pcal}\Ccal^{\rm sm}_{\Pcal}$. Then we have an exact sequence of vector bundles
\[ 0\to \pr_{1*}(\pr_2^*\Lcal\otimes\Ocal(-\Delta)) \to \pr_{1*}\pr_2^*\Lcal\simeq\pi^*\Lambda \to \pr_1*(\pr_2^*\Lcal|_{\Delta}) \to 0. \]
Let $\alpha$ be the first Chern class of the line bundle on the left, and call $\beta$ be the first Chern class of the line bundle on the right, so that we have $\alpha+\beta=\lambda_1$ and $\alpha\beta=\lambda_2$.
We claim that 
\[\CH^*(\Ccal^{\rm sm}_{\Pcal})\simeq \ZZ[\alpha,\beta,\delta_1] \]
This, together with the surjectivity of (\ref{eq:surjective}), would imply that $\CH^*(\Wcal)$ is generated by the restriction of $\alpha$, $\beta$ and $\delta_1$ and hence the image of
\[ f_*\colon\CH^*(\Wcal) \longrightarrow \CH^*(\Vcal\smallsetminus\Zcal) \] 
is generated by $f_*1$ and $f_*\alpha$: indeed, this would be a consequence of the formulas $\delta_1=f^*\delta_1$, $\beta=f^*\lambda_1-\alpha$ and
\[
\begin{split}
    f_*\alpha^n & =f_*(\alpha^n+\alpha^{n-1}\beta-\alpha^{n-1}\beta)\\
                & =f_*(\alpha^{n-1}\cdot f^*\lambda_1-\alpha^{n-2}\cdot f^*\lambda_2)\\
                & = \lambda_1\cdot f_*\alpha^{n-1} -\lambda_2\cdot f_*\alpha^{n-2}
\end{split}
\]
Let $\B_2\subset\GL_2$ be the Borel subgroup of upper-triangular matrices. Then the stack $\Ccal^{\rm sm}_{\Pcal_0}\simeq\Ccal_{\Pcal_0}$ is isomorphic to the classifying stack $\cB\B_2$: indeed, every object of $\Ccal_{\Pcal_0}$ is \'{e}tale locally isomorphic to the data of the family of curves $\PP^1_S\to S$ together with the line bundle $\Ocal(1)$ and the section $\{\infty\}\times S$, whose automorphism group is precisely $\B_2$.

We deduce then
\[\CH^*(\Ccal_{\Pcal_0})\simeq \CH^*(\cB\B_2)\simeq \ZZ[\alpha,\beta], \]
where $\alpha$ and $\beta$ are the Chern roots of the rank two vector bundle determined by the morphism $\cB\B_2\to \cB\GL_2$. 

On the other hand we have:
\[\Ccal^{\rm sm}_{\Pcal_1} \simeq \cB\left( \Gm\times (\Ga\rtimes\Gm) \times\Gm \right) \]
This follows from the fact that every object of $\Ccal^{\rm sm}_{\Pcal_1}$ is \'{e}tale locally isomorphic to the data of a pairs $\PP^1\cup_{0}\PP^1$ twisted at the node, together with the line bundle $\Ocal(\frac{1}{2}0)\cup_{0}\Ocal(\frac{1}{2}0)$ and the section $\{\infty\}\times S$ on the first $\PP^1$. Therefore:
\[ \CH^*(\Ccal^{\rm sm}_{\Pcal_1})\simeq \ZZ[\alpha,\beta,\delta_1] \]
with $\alpha$ and $\beta$ Chern roots of the rank two vector bundle induced by 
\[ \cB\left( \Gm\times (\Ga\rtimes\Gm) \times\Gm \right) \longrightarrow\cB\GL_2 \]
Now we apply Lemma \ref{lm:Atiyah} to obtain the desired description of $\CH^*(\Ccal^{\rm sm}_{\Pcal})$. This concludes the proof of (2).
\end{proof}
\subsection{Relations coming from $\Dcal^{3,3}$}
\begin{prop}\label{prop:Ucal}
    Let $I$ be the ideal in $\CH^*(\Vcal\smallsetminus\Zcal)$ generated by the cycles coming from $\Wcal$ (see Proposition \ref{pr:Wcal}).
    Then the image of the morphism
   \[ \CH^*(\Dcal^{3,3})\longrightarrow\CH^*(\Vcal\smallsetminus\Zcal)/I \]
   is generated as an ideal by $[\Dcal^{3,3}]$.
\end{prop}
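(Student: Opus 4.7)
The plan is to follow the strategy of Proposition~\ref{pr:Wcal}, constructing an auxiliary stack $\Ucal$ that plays for $\Dcal^{3,3}$ the same role that $\Wcal$ played for $\Dcal^{3}$. The key feature of $\Ucal$ will be that it carries two proper ``forgetful'' morphisms $g_1,g_2\colon\Ucal\to\Wcal$, so that any pushforward of a class from $\Ucal$ to $\Vcal\setminus\Zcal$ automatically lies in $I$.

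Concretely, I take $\Ucal$ to be the closed substack of $\Wcal\times_{\Vcal\setminus\Zcal}\Wcal$ parametrizing quintuples $(C\to S,L,s,p_1,p_2)$ with $p_1,p_2$ smooth sections of $C/S$ and $s$ vanishing to order $\geq 3$ at each---most naturally, as the closure of the open locus where $p_1\neq p_2$. The two projections $g_1,g_2\colon\Ucal\to\Wcal$ are representable and proper (being inherited from $f\colon\Wcal\to\Vcal\setminus\Zcal$), and the induced morphism $h\colon\Ucal\to\Dcal^{3,3}$ is proper and surjective, generically a $\C_2$-torsor swapping $p_1$ and $p_2$. By an argument parallel to part~(2) of Proposition~\ref{pr:Wcal}---realizing $\Ucal$ as a vector subbundle of a vector bundle over the relevant locus in $\Ccal^{\rm sm}_\Pcal\times_\Pcal\Ccal^{\rm sm}_\Pcal$, and stratifying via $\Pcal=\Pcal_0\sqcup\Pcal_1$ through Lemma~\ref{lm:Atiyah}---one finds that $\CH^*(\Ucal)$ is generated as a $\CH^*(\Pcal)$-algebra by $\delta_1$ and by the Chern classes $\alpha_1,\alpha_2$ of the polarization at the two marked points.

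The key observation is that $i\circ h=f\circ g_i$ for either $i=1,2$. Hence the pushforward $(i\circ h)_*\colon\CH^*(\Ucal)\to\CH^*(\Vcal\setminus\Zcal)$ factors through $f_*$, so it lands inside $I$. By the projection formula,
\[
i_*\bigl(i^*(v)\cdot h_*(\beta)\bigr)=v\cdot(i\circ h)_*(\beta)\in I,\qquad i_*(i^*v)=v\cdot[\Dcal^{3,3}],
\]
for every $v\in\CH^*(\Vcal\setminus\Zcal)$ and $\beta\in\CH^*(\Ucal)$. It thus suffices to prove that $\CH^*(\Dcal^{3,3})$ is generated, as a $\CH^*(\Vcal\setminus\Zcal)$-module via $i^*$, by $1$ together with the image of $h_*$.

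The main obstacle is precisely this module generation statement. Rationally it is immediate: the identification $\CH^*(\Dcal^{3,3})_\QQ\simeq\CH^*(\Ucal)_\QQ^{\C_2}$ reduces the claim to expressing the elementary symmetric functions of $\alpha_1,\alpha_2$ in terms of pullback classes, which follows from the computation of $\CH^*(\Ucal)$ in Step~1. Integrally, one must control the $2$-torsion arising from the generic $\C_2$-quotient $h$, and handle the degenerate stratum where $p_1=p_2$ (sections with a sextuple root rather than two distinct triple roots); I expect this can be carried out by a careful stratification of $\Dcal^{3,3}$ that matches the strata arising in the computation of $\CH^*(\Ucal)$, together with an application of the localization exact sequence on each stratum.
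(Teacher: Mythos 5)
Your reduction is sound as far as it goes: the pushforward $(i\circ h)_*$ factors through $f_*$ and hence lands in $I$, and by the projection formula it would indeed suffice to show that $\CH^*(\Dcal^{3,3})$ is generated, as a $\CH^*(\Vcal\smallsetminus\Zcal)$-module via $i^*$, by $1$ together with $\im(h_*)$. But that module-generation statement is the entire content of the proposition, and you do not prove it --- you observe that it holds rationally and then write that you ``expect'' the integral case can be handled by stratification and localization. This is precisely where the difficulty sits: since $h$ is generically a $\C_2$-quotient, $h_*h^*=2$ on the generic stratum, so the cokernel of $h_*$ is a priori a $2$-torsion module, and nothing in your setup rules out $2$-torsion classes on $\Dcal^{3,3}$ whose pushforwards are neither in $I$ nor multiples of $[\Dcal^{3,3}]$. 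The localization sequence for the stratification by the diagonal locus $p_1=p_2$ only bounds such classes by classes supported on the deeper stratum; it does not kill them. So the argument has a genuine gap at its central step.

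The paper closes this gap by an arithmetic trick that avoids computing $\CH^*(\Dcal^{3,3})$ altogether. Writing $J$ for the image of $\CH^*(\Dcal^{3,3})$ in $\CH^*(\Vcal\smallsetminus\Zcal)/I$, one first shows $2J=0$ exactly as you do (the restriction of $\Wcal$ over $\Dcal^{3,3}$ is finite of degree $2$). Then, separately, one shows $9J\subseteq([\Dcal^{3,3}])$: after twisting, $\Vcal\simeq\pi_*(\Lcal^{\otimes 6})$ up to a line bundle, and $\overline{\Dcal^{3,3}}$ is the image of the cube embedding $g\colon\PP(\pi_*(\Lcal^{\otimes 2}))\to\PP(\pi_*(\Lcal^{\otimes 6}))$, $s\mapsto s^{\otimes 3}$. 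Since $\CH^*(\PP(\pi_*(\Lcal^{\otimes 2})))$ is generated over $\CH^*(\Pcal)$ by $1,K,K^2$ and $g^*H=3K$, the projection formula gives $3^j\,g_*K^j=H^j\cdot g_*1$, whence $9\cdot\im(g_*)\subseteq(g_*1)$. Because $\gcd(2,9)=1$, the two containments combine to give $J\subseteq([\Dcal^{3,3}])$. If you want to salvage your approach you would need to supply an actual integral computation of $\CH^*(\Dcal^{3,3})$ (or of the cokernel of $h_*$), which is considerably harder than the coprimality argument; as written, the proof is incomplete.
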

\begin{proof}
Let $J$ be the image of $\CH^*(\Dcal^{3,3})\to\CH^*(\Vcal\smallsetminus\Zcal)/I$. We will show first that $2J=0$, and then that $9J\subset ([\Dcal^{3,3}])$. Putting these two claims together, we get the desired conclusion.

The first claim follows directly from the fact that the restriction of $\Wcal\to\Vcal\smallsetminus\Zcal$ over $\Dcal^{3,3}$ is finite of degree $2$. For the second one, consider the universal polarized twisted conic $\pi\colon\Ccal_{\Pcal}\to\Pcal$, with $\Lcal$ its universal polarization. Observe that the two vector bundles 
\[\pi_*(\Lcal^{\otimes 6}),\quad \Vcal=\pi_*\left(\Lcal^{\otimes 2}\otimes\omega_{\Ccal/\Pcal}^{\otimes (-2)}\right)\]
are isomorphic up to a twist by a line bundle of $\Pcal$. Henceforth, there is an isomorphism $\PP(\pi_*(\Lcal^{\otimes 6}))\simeq\PP(\Vcal)$ over $\Pcal$.

Let $\overline{\Zcal}$ and $\overline{\Dcal^{3,3}}$ be the projectivizations of $\Zcal$ and $\Dcal^{3,3}$. Then by \cite{VisPGLp}*{Lemma 4.1} the horizontal arrows in the following commutative diagram are surjective:
\[\xymatrix{ 
\CH^*(\overline{\Dcal^{3,3}}) \ar[r] \ar[d] & \CH^*(\Dcal^{3,3}) \ar[d] \\
\CH^*(\PP(\pi_*(\Lcal^{\otimes 6}))\smallsetminus\overline{\Zcal})\ar[r] & \CH^*(\Vcal\smallsetminus\Zcal) }\]
If we show that nine times the image of $\CH^*(\overline{\Dcal^{3,3}})\to\CH^*(\PP(\pi_*(\Lcal^{\otimes 6})))$ is contained in $([\overline{\Dcal^{3,3}}])$, we are done. 

Consider the closed embedding $g\colon \PP(\pi_*(\Lcal^{\otimes 2}))\to\PP(\pi_*(\Lcal^{\otimes 6}))$ that sends a section $s$ to $s^{\otimes 3}$ and observe that this induces an isomorphism of the domain with $\overline{\Dcal^{3,3}}$. Let $H$ and $K$ denote the hyperplane sections respectively of $\PP(\pi_*(\Lcal^{\otimes 6}))$ and $\PP(\pi_*(\Lcal^{\otimes 2}))$, so that in particular the cycles $K^j$ for $j=0,1,2$ generate the Chow ring of $\PP(\pi_*(\Lcal^{\otimes 2}))$ as $\CH^*(\Pcal)$-module. Then $\im(g_*)$ is generated as an ideal by elements of the form $g_*K^j$.

Observe that $g^*H=3K$, hence $g_*3^jK^j=H^j\cdot g_*1$. Therefore $9\cdot\im(g_*)$ is contained in $(g_*1=[\overline{\Dcal^{3,3}}])$. This concludes the proof.
\end{proof}

Putting all together, we deduce the following abstract characterization of the Chow ring of $\overline{\Mcal}_2$.
\begin{prop}\label{prop:CH M2bar abs}
   Using the same notation of Proposition \ref{prop:CH Lcal}, we have:
   \[ \CH^*(\overline{\Mcal}_2) \simeq \CH^*(\Vcal\smallsetminus\Zcal)/([\Dcal^3],\zeta,[\Dcal^{3,3}]) \]
   where $\zeta$ is the pushforward along $i\colon\Dcal^3 \hookrightarrow \Vcal\smallsetminus\Zcal$ of a Chern root of $i^*\Lambda$.
\end{prop}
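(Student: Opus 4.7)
The plan is to exploit excision. Theorem~\ref{prop:M2bar iso} realises $\overline{\Mcal}_{2}$ as the open complement of $\Dcal^{3}$ inside $\Vcal \smallsetminus \Zcal$, so the localization sequence gives a surjection $\CH^{*}(\Vcal\smallsetminus\Zcal) \twoheadrightarrow \CH^{*}(\overline{\Mcal}_{2})$ whose kernel is the image of $i_{*}\colon \CH^{*}(\Dcal^{3}) \arr \CH^{*}(\Vcal\smallsetminus\Zcal)$. This image is automatically an ideal by the projection formula, so the problem reduces to showing it is generated by $[\Dcal^{3}]$, $\zeta$ and $[\Dcal^{3,3}]$.

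The key reduction is to split $\im(i_{*})$ into a contribution from $\Wcal$ and one from $\Dcal^{3,3}$. The proper morphism $f\colon \Wcal \arr \Vcal\smallsetminus\Zcal$ factors as $f = i \circ g$ with $g\colon \Wcal \arr \Dcal^{3}$ proper and, as noted before Proposition~\ref{pr:Wcal}, an isomorphism over $\Dcal^{3} \smallsetminus \Dcal^{3,3}$. Writing $j\colon \Dcal^{3,3} \hookrightarrow \Dcal^{3}$, I would take any $\gamma \in \CH^{*}(\Dcal^{3})$, restrict to $\Dcal^{3}\smallsetminus\Dcal^{3,3}$, lift through the isomorphism to a class on $\Wcal\smallsetminus f^{-1}(\Dcal^{3,3})$ and extend (by excision on $\Wcal$) to some $\tilde\gamma \in \CH^{*}(\Wcal)$; then $\gamma - g_{*}\tilde\gamma$ vanishes on the open locus, so lies in $\im(j_{*})$. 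Consequently $\im(i_{*}) = \im(f_{*}) + \im\bigl((i\circ j)_{*}\bigr)$. Proposition~\ref{pr:Wcal} identifies $\im(f_{*})$ with the ideal generated by $f_{*}1$ and $f_{*}\alpha$, while Proposition~\ref{prop:Ucal} says that $\im\bigl((i\circ j)_{*}\bigr)$ modulo $\im(f_{*})$ is generated by $[\Dcal^{3,3}]$. Together with the immediate identification $f_{*}1 = [\Dcal^{3}]$ (because $g$ is birational of degree one), this produces the claimed ideal as soon as $f_{*}\alpha = \zeta$.

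The main subtle point, and the step I would spend most care on, is precisely this identification of $f_{*}\alpha$ with ``the pushforward of a Chern root of $i^{*}\Lambda$''. Since $i^{*}\Lambda$ need not split globally on $\Dcal^{3}$, the formulation in the statement is to be read via the birational cover $g$: over $\Wcal\smallsetminus f^{-1}(\Dcal^{3,3}) \simeq \Dcal^{3}\smallsetminus\Dcal^{3,3}$ the marked point $p$ lies in the smooth locus of the conic, and cohomology and base change applied to the short exact sequence
\[
0 \arr L(-p) \arr L \arr L|_{p} \arr 0
\]
yields a surjection $\Lambda \twoheadrightarrow p^{*}\Lcal$ of locally free sheaves on this open; hence $\alpha = c_{1}(p^{*}\Lcal)$ is the first Chern class of a rank-one quotient of $i^{*}\Lambda$ on a dense open of $\Dcal^{3}$, i.e.\ a Chern root in the sense required. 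Setting $\zeta := f_{*}\alpha = i_{*}(g_{*}\alpha)$ then matches the definition in the statement, and the argument concludes.
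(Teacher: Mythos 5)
Your proposal is correct and is essentially the argument the paper intends: it states this proposition with only "putting all together, we deduce...", relying exactly on the localization sequence for $\overline{\Mcal}_2 = (\Vcal\smallsetminus\Zcal)\smallsetminus\Dcal^3$ together with Propositions \ref{pr:Wcal} and \ref{prop:Ucal}. Your explicit decomposition $\im(i_*)=\im(f_*)+\im\bigl((i\circ j)_*\bigr)$ via the birational cover $\Wcal\to\Dcal^3$, and your reading of "Chern root of $i^*\Lambda$" through that cover, correctly supply the details the paper leaves implicit.
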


\subsection{Computation of the class of $\Zcal$}
In this last Subsection we compute $\CH^*(\Vcal\smallsetminus\Zcal)$, so to obtain a description of $\CH^*(\overline{\Mcal}_2)$ that is somehow in between of the abstract characterization of Proposition \ref{prop:CH M2bar abs}
and the explicit final result.

\begin{prop}\label{prop:Vcal0 quotient}
   Let $V_0$ be the affine space of binary forms of degree $6$, regarded as a $\GL_2$-representation via the action
   \[ A\cdot f(x_0,x_1) := \det(A)^2 f(A^{-1}(x_0,x_1)). \]
   Then $\Vcal_0\simeq \left[ V_0/\GL_2  \right]$.
\end{prop}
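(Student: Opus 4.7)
Since $\Pcal_0 \simeq \cB\GL_2$ by Proposition \ref{prop:Lcalzero iso BGLtwo}, every vector bundle on $\Pcal_0$ is automatically of the form $[W/\GL_2]$ for a uniquely determined rational $\GL_2$-representation $W$, recovered as the fiber at the universal $k$-point of $\cB\GL_2$. So the content of the proposition reduces to exhibiting a $\GL_2$-equivariant isomorphism between the representation underlying $\Vcal_0$ and the representation $V_0$ given in the statement.

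To compute this representation, I would use the tautological description of the universal polarized conic over $\Pcal_0$: under the identification $\GL_2 = \underaut_k(\PP^1, \Ocal(1))$ of Proposition \ref{prop:Lcalzero iso BGLtwo}, with $V := \Lambda(\PP^1, \Ocal(1)) = \H^0(\PP^1, \Ocal(1))$ playing the role of the standard two-dimensional representation, the universal curve is $\PP^1 = \proj \Sym V$ with its natural $\GL_2 = \GL(V)$-action, and the universal polarization is $\Ocal(1)$ with the tautological equivariant structure. The representation underlying $\Vcal_0$ is then $\H^0(\PP^1,\, \Ocal(1)^{\otimes 2} \otimes \omega_{\PP^1}^{\otimes(-2)})$; its underlying vector space is $\H^0(\PP^1, \Ocal(6))$, which is $7$-dimensional, matching $\dim V_0$.

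The key technical ingredient is the $\GL_2$-equivariant structure on $\omega_{\PP^1}$, which carries an essential twist by $\det V$ beyond its underlying line bundle $\Ocal(-2)$. I would extract this twist from the equivariant Euler sequence $0 \to \Ocal \to V^\vee \otimes \Ocal(1) \to T_{\PP^1} \to 0$: taking determinants gives $T_{\PP^1} \simeq \det V^{\otimes(-1)} \otimes \Ocal(2)$, whence $\omega_{\PP^1} \simeq \det V \otimes \Ocal(-2)$ and $\omega_{\PP^1}^{\otimes(-2)} \simeq \det V^{\otimes(-2)} \otimes \Ocal(4)$. Combining, $\Ocal(1)^{\otimes 2} \otimes \omega_{\PP^1}^{\otimes(-2)} \simeq \det V^{\otimes(-2)} \otimes \Ocal(6)$; taking global sections and passing via the canonical isomorphism $\Sym^n V \simeq (\Sym^n V^\vee)^\vee$ matches the resulting representation with binary forms of degree $6$ carrying the stated $\GL_2$-action $A \cdot f(x_0,x_1) = \det(A)^2 f(A^{-1}(x_0,x_1))$, which can be verified for instance on a single diagonal matrix. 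The main obstacle is precisely this bookkeeping of $\det V$-twists: they are invisible at the level of underlying line bundles but they govern exactly the factor $\det(A)^2$ appearing in the statement.
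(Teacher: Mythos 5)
Your overall strategy is sound and is in fact more informative than the paper's own proof, which consists of a single citation to \cite{VisM2}*{Proposition 3.1}: reducing to the identification of the $\GL_2$-representation $\H^0\bigl(\PP^1,\Ocal(1)^{\otimes 2}\otimes\omega_{\PP^1}^{\otimes(-2)}\bigr)$ and extracting the determinant twist of $\omega_{\PP^1}$ from the equivariant Euler sequence is exactly the right mechanism, and your intermediate conclusion that the fiber of $\Vcal_0$ is $(\det V)^{\otimes(-2)}\otimes\Sym^6 V$ with $V=\H^0(\PP^1,\Ocal(1))$ is correct.

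The final matching step, however, fails as you have set it up. If $V$ is declared to be the \emph{standard} representation $E$ of $\GL_2$ (as you write), then $(\det E)^{\otimes(-2)}\otimes\Sym^6 E\simeq(\det E)^{\otimes 4}\otimes\Sym^6 E^{\vee}$, whereas $V_0=(\det E)^{\otimes 2}\otimes\Sym^6 E^{\vee}$; these differ by the nontrivial character $(\det)^{\otimes 2}$ and are not isomorphic. Your proposed check on a diagonal matrix actually detects this: the weights of $(\det E)^{\otimes(-2)}\otimes\Sym^6 E$ are $(i-2)t_1+(4-i)t_2$ (so the central torus acts by $z^2$), while those of $V_0$ are $(2-i)t_1+(i-4)t_2$ (central character $z^{-2}$); the two weight multisets are negatives of each other. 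The source of the slip is the identification $\GL_2\simeq\underaut_k(\PP^1,\Ocal(1))$: under the geometric action, $A$ acts on sections by $s\mapsto s\circ A^{-1}$, so the matrix of $A$ on the basis $X_0,X_1$ of $\H^0(\Ocal(1))$ is $(A^{-1})^{T}$, i.e.\ $V\simeq E^{\vee}$ rather than $E$. With $V=E^{\vee}$ your formula gives $(\det E)^{\otimes 2}\otimes\Sym^6 E^{\vee}=V_0$ on the nose, so the fix is one line; alternatively one can note that the two candidate representations are exchanged by the outer automorphism $A\mapsto(A^{T})^{-1}$ of $\GL_2$, so the quotient stacks are abstractly isomorphic in any case. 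But the choice is not purely cosmetic in the context of the paper, since the later computations (e.g.\ the substitution $h\mapsto 2\lambda_1$ and the identification $\lambda_1=t_1+t_2$ in the proof of Proposition \ref{prop:class D3}) require the isomorphism $\Vcal_0\simeq[V_0/\GL_2]$ to be compatible with $\Lambda|_{\Pcal_0}$; you should therefore fix the identification of $\underaut_k(\PP^1,\Ocal(1))$ with $\GL_2$ explicitly and track the duality through to the end.
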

\begin{proof}
This is an easy extension of \cite{VisM2}*{Proposition 3.1} .
\end{proof}

Let $V_1$ be the affine space of pairs of binary forms $(f,g)$ of degree $3$ satisfying $f(0,1)=g(0,1)$. If $(u',v',u,v,t,\sigma)$ denotes an element of the group $\Ga^2\rtimes\overline{\Gamma}$, we have an action of this group on $V_1$ defined by:
\[ (u',v',u,v,t) \cdot  (f,g) := (t^2f(u^{-2}x_0,x_1-u'u^{-2}x_0),t^2g(v^{-2}y_0,y_1-v'v^{-2}y_0)) \]
and $\sigma$ exchanges $f$ and $g$. Observe that the formula above induces a well defined action of $\Ga^2\rtimes\Gamma$, hence we can regard $V_1$ as a $\Ga^2\rtimes\Gamma$-representation.

\begin{prop}\label{prop:Vcal1 quotient}
   We have $\Vcal_1\simeq [V_1/\Ga^2\rtimes\Gamma]$. Moreover, the pullback along the morphism $[V_1/\Gamma]\to \Vcal_1$ induces an isomorphism at the level of Chow rings.
\end{prop}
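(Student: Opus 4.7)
The plan is to prove the proposition in two parts: (i) identify the rank-$7$ vector bundle $\Vcal_1\to\Pcal_1$ with $[V_1/(\Ga^2\rtimes\Gamma)]$ by computing its fiber as a representation, and (ii) deduce the Chow-ring isomorphism from homotopy invariance for an $\Ga^2$-torsor.

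Since $\Pcal_1\simeq\cB(\Ga^2\rtimes\Gamma)$ by Proposition~\ref{prop:Lcaluno iso BGamma}, every vector bundle on $\Pcal_1$ is automatically of the form $[W/(\Ga^2\rtimes\Gamma)]$ for a unique representation $W$; in our case $W$ is the fiber of $\Vcal$ at the chosen geometric point $(C,L)\in\Pcal_1$, that is, $W=\H^0(C,L^{\otimes 2}\otimes\omega_C^{\otimes -2})$. To identify $W$ as a vector space, I would pull back along the double cover $g\colon D\to C$ from Section~\ref{subsec:local-structure} and compute $W$ as the $\mmu_2$-invariants of $\H^0(D,g^*(L^{\otimes 2}\otimes\omega_C^{\otimes -2}))$. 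Restricted to each component of $D$ the pullback line bundle has $\H^0$ of dimension $4$ (forced by the rank count $4+4-1=7$, and concretely realized as $\Ocal_{\PP^1}(3)$), so the space of sections on $D$ is pairs of binary cubics $(f,g)$ agreeing at the node, exactly matching $V_1$.

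For the equivariant structure, I would compute the action of each factor of $\Ga^2\rtimes\overline{\Gamma}=\Ga^2\rtimes(\Gm^3\rtimes\rC_2)$ using the explicit description from Section~\ref{subsec:automorphism-polarized}: the two $\Ga$'s act as translations on the two components of $D$, the first two $\Gm$'s rescale the coordinates $x_0,y_0$ on $D$, the third $\Gm$ rescales the polarization, and $\rC_2$ swaps the two components. Written out on pairs of cubics in the given affine coordinates, this reproduces the formula in the statement; the $u^{-2}$, $v^{-2}$, $t^2$ factors reflect that $\Vcal$ involves $L^{\otimes 2}$ and that the translation parameters must be reconciled with the rescaled coordinates. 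A direct check shows that the diagonal $\mmu_2\subset\Gm^3$ acts trivially on $V_1$ (indeed all the $\Gm$-parameters appearing in the formula enter as squares), so the action descends to $\Ga^2\rtimes\Gamma$ and $\Vcal_1\simeq[V_1/(\Ga^2\rtimes\Gamma)]$.

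For part (ii), the splitting $\Gamma\hookrightarrow\Ga^2\rtimes\Gamma$ from the semidirect-product decomposition induces a morphism $[V_1/\Gamma]\to\Vcal_1=[V_1/(\Ga^2\rtimes\Gamma)]$. Given a $(\Ga^2\rtimes\Gamma)$-torsor $Q\to S$, the scheme of $\Gamma$-reductions of $Q$ is the $\Ga^2$-torsor $Q/\Gamma\to S$; in other words, this morphism is an $\Ga^2$-torsor, and since $\Ga^2$-torsors are Zariski-locally trivial, it is an affine bundle of rank $2$. Homotopy invariance of equivariant Chow rings, applied via the Totaro--Edidin--Graham approximation exactly as in~\cite{MolVis}*{Lemma 2.3}, then yields the desired isomorphism on Chow rings. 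The delicate step is the equivariant bookkeeping in part (i), where one must verify that the precise normalizations ($u^{-2}$, $v^{-2}$, $t^2$) in the formula emerge correctly from the way each $\Gm$-factor acts on the coordinates of $D$ and on the polarization $L$.
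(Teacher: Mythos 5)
Your overall strategy coincides with the paper's: realize $\Vcal_1$ as $[W/(\Ga^2\rtimes\Gamma)]$ for $W$ the fiber of $\Vcal$ at $(C,L)$, compute $W$ by pulling back along $g\colon D\to C$ and taking $\mmu_2$-invariants, and then obtain the Chow-ring statement from homotopy invariance for the affine ($\Ga^2$-)bundle $[V_1/\Gamma]\to[V_1/(\Ga^2\rtimes\Gamma)]$, which is exactly the content of \cite{MolVis}*{Lemma 2.3} that the paper also invokes. Part (ii) is fine as you state it.

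There is, however, a concrete error in your identification of the fiber. The sheaf $L^{\otimes 2}\otimes\omega_C^{\otimes(-2)}$ has degree $3$ on each component of the \emph{stacky} curve $C$, but $g\colon D\to C$ has degree $2$ on each component (it is ramified at the points at infinity), so $g^*L=M$ has degree $1$ and $g^*\omega_C$ has degree $-2$ per component of $D$; hence the pullback of $L^{\otimes 2}\otimes\omega_C^{\otimes(-2)}$ restricted to a component of $D$ is $\Ocal_{\PP^1}(6)$, not $\Ocal_{\PP^1}(3)$, and $\H^0(D,g^*(L^{\otimes 2}\otimes\omega_C^{\otimes(-2)}))$ is the $13$-dimensional space of pairs of binary \emph{sextics} agreeing at the node. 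The $7$-dimensional space of pairs of cubics emerges only after taking $\mmu_2$-invariants: the invariant monomials are $x_0^{2i}x_1^{6-2i}$ (resp. $y_0^{2j}y_1^{6-2j}$), i.e.\ cubics in the squared coordinates. As written, your computation makes the $\mmu_2$-invariance step you set up vacuous (a $7$-dimensional space on which $\mmu_2$ would have to act trivially), which is a sign something is off. This is not merely cosmetic: the exponents $u^{-2}$, $v^{-2}$ in the action on $V_1$ arise precisely because the variables of the cubics are the squares of the coordinates on $D$, on which $u$ and $v$ act with weight $-1$; only the $t^2$ comes from the twist by $L^{\otimes 2}$. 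Your heuristic attributing the squares to ``reconciling translation parameters with rescaled coordinates'' would therefore not reproduce the stated formula, and since you yourself identify the equivariant bookkeeping as the delicate point, this gap needs to be repaired by carrying out the invariants computation on the degree-$6$ sections (or, equivalently, by working directly on the stacky components of $C$ and tracking the action through the coarse-space coordinates).
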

\begin{proof}
Let $C$ be the usual curve $\PP^1\cup_{0}\PP^1$ twisted in the node, with $L:=\Ocal(\frac{1}{2}0)\cup_{0}\Ocal(\frac{1}{2}0)$.

We know from Proposition \ref{prop:Lcaluno iso BGamma} that the morphism $f\colon\Spec(k)\to\Pcal_1$ induced by $(C,L)$ is a $\Ga^2\rtimes\Gamma$-torsor, hence $f^*\Vcal_1$ is a $\Ga^2\rtimes\Gamma$-representation and $\Vcal_1\simeq [f^*\Vcal_1/\Ga^2\rtimes\Gamma]$. By definition $f^*\Vcal_1=\H^0(C,L^{\otimes 2}\otimes\omega_{C}^{\otimes (-2)})$.

If $D$ denotes the curve $\PP^1\cup_{0}\PP^1$, we have seen in the proof of Proposition \ref{prop:Lcaluno iso BGamma} that there is a morphism $g:D\to C$ obtained by first taking the quotient of $D$ with respect to the action of $\mmu_{2}$ and then rigidifying the points at infinity. Moreover, the line bundle $M:=\Ocal(0)\cup_{0}\Ocal(0)$ is equal to the pullback of $L$ along this morphism. Therefore we have:
\[g^*f^*\Vcal_0 \simeq \H^0(D, M^{\otimes 2}\otimes\omega_{D}^{\otimes (-2)})^{\mmu_{2}} \]
If $x_1^2$ denotes the generator of $M^{\otimes 2}$ near the node, then an element
\[ (u',v',u,v,t,\sigma) \in \Ga^2\rtimes\overline{\Gamma}\]
acts on $x_1^{2}$ by sending it to $t^2 x_1^{2}$, and $\sigma$ exchange $x_i$ and $y_i$.

The dualizing sheaf is generated around the node by the log differential that is equal to $\frac{dx}{x}$ on one component and $\frac{dy}{y}$ on the other. This generator is invariant with respect to the $\Ga^2\rtimes\overline{\Gamma}$-action, hence the same thing must be true for the local generator of $\omega_{D}^{\otimes (-2)}$ near the node.

We deduce that $\Ga\rtimes\overline{\Gamma}$ acts on the generator $e$ of $\left(M^{\otimes 2}\otimes\omega_{D}^{\otimes -2}\right)_{0}$ by multiplication by $t^2$. An element of $\left(M^{\otimes 2}\otimes\omega_{D}^{\otimes -2}\right)_{0}$ is of the form $(f(x),g(y))\cdot e$, where $f$ and $g$ are polynomials satisfying $f(0)=g(0)$. In order for these elements to extend to the whole of $D$, the degrees of $f$ and $g$ must be $\leq 6$. In other terms:
\[ \H^0(D, M^{\otimes 2}\otimes\omega_{D}^{\otimes (-2)})=\langle (x_1^6,y_1^6)\cdot e, (x_0x_1^5,0)\cdot e, (0,y_0y_1^5)\cdot e,\ldots,(x_0^6,0)\cdot e,(0,y_0^6)\cdot e \rangle \]
The subspace of $\mmu_{2}$-invariant elements is then generated by $e$, $(x_0^{2i}x_1^{6-2i},0)\cdot e$ and $(0,y_0^{2i}y_1^{6-2i})$, where $i$ goes from $1$ to $3$.

By construction $\Ga\rtimes\overline{\Gamma}$ acts on $x_0$ and $x_1$, regarded as rational functions, as:
\[ (u',v',u,v,t)\cdot x_0=u^{-1}x_0,\quad (u',u,v',v,t)\cdot x_1= x_1-u'u^{-2}x_0\]
and $\sigma$ exchanges $x_i$ and $y_i$. A similar description holds for $y_i$, with $v$ in place of $u$.

Putting all together, we get the claimed description of $f^*\Vcal_1$. The fact that $[V_1/\Gamma]\to\Vcal_1$ induces an isomorphism at the level of Chow rings can be deduced from \cite{MolVis}*{Lemma 2.3}.
\end{proof}

\begin{prop}\label{prop:class Zcal}
   We have:
   \begin{enumerate}
       \item $[\Zcal]=\lambda_1+\delta_1+\xi$ in $\CH^1(\Vcal_1)$.
       \item $[\Zcal]=\eta+\delta_1(\lambda_1+\delta_1)$ in $\CH^2(\Vcal)$.
   \end{enumerate}
\end{prop}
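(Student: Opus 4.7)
I would prove Part~(1) by identifying $\Zcal\cap\Vcal_1$ with the vanishing locus of a natural surjection of vector bundles on $\Pcal_1$ and then computing the first Chern class of the quotient line bundle. By Proposition~\ref{prop:Vcal1 quotient}, $\Vcal_1\simeq[V_1/(\ga^2\rtimes\Gamma)]$ and the pullback along $[V_1/\Gamma]\to\Vcal_1$ induces an isomorphism $\CH^*(\Vcal_1)\simeq\CH^*(\cB\Gamma)$. Under this presentation, $\Zcal\cap\Vcal_1$ corresponds to the $\Gamma$-invariant codimension-one subspace $V_1^0\subset V_1$ of sections of $L^{\otimes 2}\otimes\omega_C^{\otimes -2}$ vanishing at the node, and the evaluation map $V_1\to(L^{\otimes 2}\otimes\omega_C^{\otimes -2})_\mathrm{node}$ exhibits $V_1/V_1^0$ as a one-dimensional $\Gamma$-representation. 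Consequently $[\Zcal]_{\Vcal_1}$ equals the first Chern class of the associated line bundle on $\cB\Gamma$.

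The next step is a character computation. From the analysis in the proof of Proposition~\ref{prop:Vcal1 quotient}, the pullback of $V_1/V_1^0$ to $\overline{\Gamma}=\gm^3\rtimes\rC_2$ is the one-dimensional representation of character $t^2$, and the swap acts trivially on the fiber at the node. Using the quotient map $\overline{\Gamma}\to\Gamma\simeq(\gm^2\rtimes\rC_2)\times\gm$, $(u,v,t,\sigma)\mapsto((ut^{-1},vt^{-1},\sigma),uv)$, and the identity
\[ t^2=(ut^{-1})^{-1}(vt^{-1})^{-1}(uv), \]
this descends to the character $\chi^{-1}\otimes\gamma$ of $\Gamma$, where $\gamma$ is the third $\gm$-factor (so $c_1(\gamma)=\delta_1$ by Lemma~\ref{lm:normal bundle P1}) and $\chi$ is the character of $\gm^2\rtimes\rC_2$ extending $\alpha_1\alpha_2$ trivially across $\sigma$. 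One determines $c_1(\chi)$ in $\CH^*(\cB(\gm^2\rtimes\rC_2))$ by comparing its restrictions to $\cB\gm^2$ (where it equals $\alpha_1+\alpha_2$) and to $\cB\rC_2$ (where it equals $0$) with the analogous restrictions of $\lambda_1$ and $\xi$ computed in the proof of Proposition~\ref{prop:CH Lcaluno}. Combining this with $2\xi=0$ (hence $-\xi=\xi$) yields
\[ [\Zcal]_{\Vcal_1}=c_1(\chi^{-1}\otimes\gamma)=\lambda_1+\delta_1+\xi. \]

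For Part~(2), the codimension-two portion of $\Zcal\subseteq\Vcal$ is supported on $\Vcal_1$, while the part lying over $\Vcal_0$ has codimension zero and therefore contributes nothing to $\CH^2(\Vcal)$. Hence $[\Zcal]_\Vcal=(i_{\Vcal_1})_*[\Zcal]_{\Vcal_1}$. Since $\Vcal_1\subseteq\Vcal$ is a smooth divisor with fundamental class $\delta_1$ (pulled back from $\Pcal_1$) and $(i_{\Vcal_1})_*\xi=\eta$ by Proposition~\ref{prop:CH Lcal}, the projection formula gives
\[ [\Zcal]_\Vcal=(i_{\Vcal_1})_*(\lambda_1+\delta_1+\xi)=\lambda_1\delta_1+\delta_1^2+\eta=\eta+\delta_1(\lambda_1+\delta_1). \]

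The main obstacle is the character identification in Part~(1). There are two distinct one-dimensional characters of $\gm^2\rtimes\rC_2$ whose restriction to $\gm^2$ is $\alpha_1\alpha_2$, namely the trivial extension $\chi$ and the determinant $\det V$ of the standard representation; they differ by the sign character and hence their first Chern classes differ by $\xi$. The proof must therefore pin down precisely which extension arises at the node, by tracking the action of $\sigma$ on the fiber of $M$ at the node via the explicit description of the gluing of the two copies of $\cO(1)$ in the construction of $M$.
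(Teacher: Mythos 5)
Your strategy for both parts is the same as the paper's: identify $\Zcal$ with the linear subrepresentation $[U_1/\Gamma]\subset[V_1/\Gamma]$ cut out by evaluation at the node, so that $[\Zcal]_{\Vcal_1}$ is the first Chern class of the one\dash dimensional quotient representation (the fiber of $\cF$ at the node, of character $t^2$ on $\overline\Gamma$), and then push forward by the projection formula together with $i_*\xi=\eta$. Part (2) is fine as written.

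In Part (1), however, your character bookkeeping is internally inconsistent, and the appeal to $2\xi=0$ does not repair it. By Lemma \ref{lm:P1 almost faithful} the weights of $\Lambda_1$ pulled back to $\overline\Gamma$ are $u^{-1}t$ and $v^{-1}t$, so the identity to use is $t^2=(u^{-1}t)(v^{-1}t)(uv)$: the node character is the \emph{product of the weights of $\Lambda_1$} (extended trivially across $\sigma$, since $\sigma$ fixes the generator $e$ at the node) times the normal\dash bundle character $uv$. The first factor is the character of $\det\Lambda_1\otimes\mathrm{sgn}$ (the swap acts by $-1$ on $\det\Lambda_1$), so its first Chern class is $\lambda_1+\xi$, and one gets $\lambda_1+\xi+\delta_1$ with no inversion anywhere. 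You instead write the node character as $\chi^{-1}\otimes\gamma$ while simultaneously normalizing $c_1(\chi)=\lambda_1+\xi$ by demanding $\chi|_{\cB\gm^2}=\alpha_1+\alpha_2$; these two choices are incompatible (under that normalization the $\alpha_i$ are the weights of $\Lambda_1$ and the node character is $\chi\otimes\gamma$, not $\chi^{-1}\otimes\gamma$). Taken literally, your chain yields $-(\lambda_1+\xi)+\delta_1=-\lambda_1+\xi+\delta_1$, and $2\xi=0$ only flips the $\xi$; it cannot flip $\lambda_1$, which is not torsion. Your closing remark correctly isolates the other genuine subtlety — the two extensions of $\alpha_1\alpha_2$ across $\sigma$ differ by the nonzero $2$\dash torsion class $\xi$ — and the resolution is precisely the computation above: $\sigma$ acts trivially on the fiber at the node and by $-1$ on $\det\Lambda_1$, which forces the $+\xi$.
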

\begin{proof}
The stack $\Zcal$ is a vector subbundle of $\Vcal_1$ of codimension $1$. We know from Proposition \ref{prop:Vcal1 quotient} that $\Vcal_1\simeq [V_1/\Ga^2\rtimes\Gamma]$, where $V_1$ is the $\Ga^2\rtimes\Gamma$-representation consisting of pairs $(f,g)$ where $f$ and $g$ are both binary forms of degree three, satisfying $f(0,1)=g(0,1)$.

The stack $\Zcal$ is then isomorphic to $[U_1/\Gamma]$, where $U_1$ is the $\Ga^2\rtimes\Gamma$-subrepresentation generated by pairs $(f,g)$ with $f(0,1)=g(0,1)=0$.

From now on, we will work with the group $\Gamma$ rather than $\Ga^2\rtimes\Gamma$. The group $\Gamma$ is simpler to handle and the final result will not be affected by this change, as Proposition \ref{prop:Vcal1 quotient} shows.

Let $e^{\vee}$ be the functional on $V_1$ dual to $e$: then for every $\gamma$ in $\Gamma$ we have $\gamma\cdot e^{\vee}=\chi_e(\gamma)^{-1}e^{\vee}$, where $\chi$ is the character
\[ \chi_e(u^{-1}t,v^{-1}t,uv,\sigma) = t^2 = (u^{-1}t)(v^{-1}t)(uv). \]

Recall from the proof of Proposition \ref{prop:CH Lcaluno} that the first Chern class of the rank $1$ representation determined by the character $uv$ is equal to $\delta_1$.

Let $V$ be the rank $2$ representation induced by the injective homomorphism $\Gamma\hookrightarrow \GL_2$, where $\Gm^{ 2}\times\{\id\}\times\{1\}$ is sent to subtorus of diagonal matrices, the generator $\sigma$ of $\C_2$ is sent to the transposition matrix and the last copy of $\Gm$ is sent to $1$.

We have seen in Proposition \ref{prop:Vcal1 quotient} that $\lambda_1$ is the first Chern class of $\det(V)$: the associated character is $u^{-1}v^{-1}t^2\otimes\sigma$, where $\sigma$ is the non-trivial character of $\C_2$. Therefore $u^{-1}v^{-1}t^2$ is the character associated to $\det(V)\otimes\sigma$, whose first Chern class is $\lambda_1+\xi$.

It follows from \cite{EF}*{Lemma 2.4} that the equivariant class of $U_1$ in $\CH^*_{\Gamma}(V_1)$ is equal to the first Chern class of the representation determined by $\chi_e$, which for what we have just observed coincides with $\lambda_1+\xi+\delta_1$. This proves (1).

Let $i\colon\Vcal_1\to\Vcal$ be the closed embedding. Then it follows from Proposition \ref{prop:CH Lcal} that
\[ i_*(\lambda_1+\delta_1+\xi)=i_*i^*(\lambda_1+\delta_1)+i_*\xi=\delta_1(\lambda_1+\delta_1)+\eta. \]
This proves (2).
\end{proof}

\begin{cor}\label{cor:Chow M2bar abs}
We have
\[ \CH^*(\overline{\Mcal}_2) \simeq \ZZ[\lambda_1,\lambda_2,\delta_1]/(2\delta_1(\lambda_1+\delta_1),\delta_1^2(\lambda_1+\delta_1),[\Dcal^{3}],\zeta, [\Dcal^{3,3}]),  \]
where the $\lambda_i$ are the Mumford lambda classes and $\delta_1$ is the boundary divisor $[\Delta_1]$.
\end{cor}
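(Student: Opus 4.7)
The starting point is Proposition~\ref{prop:CH M2bar abs}, which reduces the problem to computing $\CH^*(\Vcal \setminus \Zcal)$ and then imposing the three relations from $[\Dcal^3]$, $\zeta$ and $[\Dcal^{3,3}]$. By homotopy invariance for vector bundles, $\CH^*(\Vcal) \cong \CH^*(\Pcal)$, which by Proposition~\ref{prop:CH Lcal} equals $\ZZ[\lambda_1,\lambda_2,\delta_1,\eta]/(2\eta,\, \eta(\lambda_1\delta_1 + \eta))$. The plan is to work out the ideal $\iota_{*}\CH^*(\Zcal) \subseteq \CH^*(\Vcal)$ coming from the localization sequence for $\iota\colon \Zcal \hookrightarrow \Vcal$, substitute the resulting expression for $\eta$ into the presentation above, and then append the excision relations.

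The core computation factors $\iota = i \circ j$, where $j\colon \Zcal \hookrightarrow \Vcal_1$ is a codimension~$1$ vector subbundle inclusion and $i\colon \Vcal_1 \hookrightarrow \Vcal$ is the closed embedding of Proposition~\ref{prop:CH Lcal}. Since $j^*$ is an isomorphism $\CH^*(\Vcal_1)\simeq\CH^*(\Zcal)$ and $\CH^*(\Vcal_1)\cong\CH^*(\Pcal_1)$ is generated as a $\CH^*(\Vcal)$-module (via $\iota^*$) by $1$ and $\xi$, the projection formula shows that $\iota_*\CH^*(\Zcal)$ is the $\CH^*(\Vcal)$-ideal generated by $\iota_*(1)=[\Zcal]$ and $\iota_*(\xi)$. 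Proposition~\ref{prop:class Zcal} provides $[\Zcal] = \eta + \delta_1(\lambda_1 + \delta_1)$; for the second generator, the self-intersection formula gives $j_*\xi = \xi \cdot [\Zcal]_{\Vcal_1} = \xi(\lambda_1+\delta_1+\xi)$, which simplifies to $\delta_1\xi$ using the relation $\xi^2 = -\lambda_1\xi$ in $\CH^*(\Pcal_1)$, and then $i_*(\delta_1\xi) = \delta_1 \cdot i_*\xi = \delta_1\eta$ by the projection formula together with the defining identity $i_*\xi = \eta$.

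With these two generators in hand, $\eta$ can be eliminated: modulo $\iota_*\CH^*(\Zcal)$ we have $\eta \equiv -\delta_1(\lambda_1+\delta_1)$. Substituting, the relation $2\eta = 0$ becomes $2\delta_1(\lambda_1+\delta_1) = 0$; the relation $\delta_1\eta = 0$ becomes $\delta_1^2(\lambda_1+\delta_1)=0$; and $\eta(\lambda_1\delta_1+\eta) = 0$ reduces to $\delta_1^3(\lambda_1+\delta_1) = 0$, which is already a multiple of the previous one. Hence $\CH^*(\Vcal\setminus\Zcal) \simeq \ZZ[\lambda_1,\lambda_2,\delta_1]/\bigl(2\delta_1(\lambda_1+\delta_1),\, \delta_1^2(\lambda_1+\delta_1)\bigr)$, and combining this with Proposition~\ref{prop:CH M2bar abs} yields the claimed presentation of $\CH^*(\overline{\Mcal}_2)$. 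The only real subtlety is recognizing that the kernel of $\CH^*(\Vcal) \to \CH^*(\Vcal \setminus \Zcal)$ is generated not merely by $[\Zcal]$ but also by the extra class $\iota_*(\xi) = \delta_1\eta$; otherwise one would obtain only the weaker relation $\delta_1^3(\lambda_1+\delta_1)=0$ in place of the sharper $\delta_1^2(\lambda_1+\delta_1)=0$ appearing in the statement.
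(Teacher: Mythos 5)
Your proof is correct and follows exactly the route the paper intends: combine Proposition \ref{prop:CH M2bar abs} with Proposition \ref{prop:class Zcal}, identifying the kernel of $\CH^*(\Vcal)\to\CH^*(\Vcal\smallsetminus\Zcal)$ as the ideal $\bigl([\Zcal],\iota_*\xi\bigr)=\bigl(\eta+\delta_1(\lambda_1+\delta_1),\,\delta_1\eta\bigr)$ and then eliminating $\eta$. You also correctly flag the one point the paper leaves implicit, namely that the second generator $\iota_*\xi=\delta_1\eta$ is what upgrades $\delta_1^3(\lambda_1+\delta_1)$ to the sharper relation $\delta_1^2(\lambda_1+\delta_1)$.
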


\section{Final computations}\label{sec:concrete}
In this Section we assume for most of the time that the ground field has characteristic zero. We drop this assumption only in the very last Subsection. The main result is the computation of the integral Chow ring of $\overline{\Mcal}_2$: we first complete this computation over fields of characteristic zero (Theorem \ref{thm:Chow M2bar}) and then we extend this result over fields of characteristic $\neq 2,3$ (Theorem \ref{thm:Chow M2bar ii}).

\subsection{Equivariant intersection theory of $T$-schemes}
We start with two technical results.
\begin{lm}[\cite{EF}*{Lemma 2.4}]\label{lm:class hypersurface}
    Given a $T$-representation $V$, let $Y\subset\PP(V)$ be a $T$-invariant hypersurface of degree $d$, whose homogeneous equation is $f(x)=0$. Let $\chi$ be the character of $T$ such that $f(\tau^{-1}\cdot x)=\chi(\tau)f(x)$ for every $\tau$ in $T$. Then
    \[ [Y]_T = dh + c_1^T(E_\chi), \]
    where $h=c_1^T(\Ocal(1))$ and $E_\chi$ is the rank one $T$-representation determined by $\chi$.
\end{lm}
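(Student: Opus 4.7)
The plan is to exhibit $Y$ as the zero scheme of a $T$-equivariant section of a suitable equivariant line bundle on $\PP(V)$, and then to invoke the standard fact that the equivariant fundamental class of the zero scheme of a regular section of an equivariant line bundle equals its equivariant first Chern class. The first step is to recall the identification $\H^0\bigl(\PP(V), \Ocal_{\PP(V)}(d)\bigr) = \Sym^d V^\vee$ as $T$-modules, where $V^\vee$ carries the contragredient action. Under this identification, the transformation rule $f(\tau^{-1}\cdot x) = \chi(\tau) f(x)$ is precisely the statement that $f$, viewed as an element of $\Sym^d V^\vee$, is a $T$-semi-invariant of weight $\chi$, i.e.\ $\tau\cdot f = \chi(\tau)f$.

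Next, tensoring $\Ocal_{\PP(V)}(d)$ with the trivial line bundle endowed with the $T$-action through the character $\chi$ (that is, pulling back the rank one representation $E_\chi$ to $\PP(V)$) produces an equivariant line bundle $\Ocal_{\PP(V)}(d)\otimes E_\chi$ in which $f\otimes 1$ becomes a genuinely $T$-invariant global section, because the weights cancel. Since tensoring by a trivial line bundle does not alter the scheme of zeros, the vanishing locus of this equivariant section is still $Y$. Applying the equivariant analogue of the fact that $[Z(\sigma)] = c_1(L)$ for a regular section $\sigma$ of a line bundle $L$, one concludes
\[ [Y]_T = c_1^T\bigl(\Ocal_{\PP(V)}(d)\otimes E_\chi\bigr) = c_1^T\bigl(\Ocal_{\PP(V)}(d)\bigr) + c_1^T(E_\chi) = dh + c_1^T(E_\chi). \]
In the equivariant setting the last identity is justified, in the style of Edidin--Graham, by a finite-dimensional approximation $U/T$ of $\cB T$, where everything reduces to the ordinary statement on the quasi-projective scheme $\PP(V)\times_T U$.

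The only subtle point, and the one place where one must be careful, is pinning down the sign convention: one needs to verify that the twist is by $E_\chi$ and not by its inverse. This is forced by the precise form of the transformation rule in the statement, where $\chi(\tau)$ appears together with $\tau^{-1}$ acting on $x$, which is exactly the correct recipe under the contragredient action on $\Sym^d V^\vee$ to place $f$ in weight $\chi$ rather than $-\chi$.
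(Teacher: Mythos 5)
Your overall strategy is the right one, and it is essentially the argument behind the source the paper cites for this statement (the paper itself gives no proof, only the reference to \cite{EF}*{Lemma 2.4}): realize $Y$ as the zero locus of a $T$-invariant section of an equivariant twist of $\Ocal_{\PP(V)}(d)$ and take the equivariant first Chern class; the reduction to an ordinary Chern class computation on a finite-dimensional approximation of $\cB T$ is also fine, as is the identification of $\H^0(\PP(V),\Ocal(d))$ with $\Sym^d V^\vee$ carrying the contragredient action.

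The problem is in the one step you yourself single out as the crux, and it is a genuine error rather than a convention quibble. You correctly observe that the rule $f(\tau^{-1}\cdot x)=\chi(\tau)f(x)$ says exactly that $f$, as an element of $\Sym^d V^\vee$ with the contragredient action, spans a subrepresentation isomorphic to $E_\chi$. But then $f\otimes 1$ is \emph{not} invariant in $\Ocal(d)\otimes E_\chi$: tensoring a weight-$\chi$ section with a weight-$\chi$ trivial factor produces weight $2\chi$, not weight $0$. The invariant section lives in $\Ocal(d)\otimes E_\chi^{\vee}$, so your argument, carried out consistently, yields $[Y]_T=dh-c_1^T(E_\chi)$. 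The sign is not harmless here. A direct check on $\PP^1$ (take $T=\gm$, $V$ with weights $(1,0)$, $f=x_0$, so $\chi=-1$; the point $\{x_0=0\}$ has class $h+t$ under the normalization in which $h$ restricted to a fixed point $[v]$ is the weight of $v^{\vee}$, which is the normalization used later in the paper) gives $dh-c_1^T(E_\chi)$, and the way the lemma is actually applied in the proof of Proposition \ref{prop:formulas} --- the hyperplane $(x_0^{j}x_1^{d-j})^{\vee}=0$, whose equation has $\chi=j\gamma_1+(d-j)\gamma_2$ in the sense of the statement, is assigned the class $h-j\gamma_1-(d-j)\gamma_2$ --- is likewise the minus-sign version. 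So either $E_\chi$ in the statement is to be read as the representation on which $T$ acts through $\chi^{-1}$, or the displayed formula carries a sign slip; in either case your justification of the plus sign (``the weights cancel'') does not hold as written, and the fix is to twist by $E_\chi^{\vee}$ and carry the resulting minus sign through to the conclusion.
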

The following Theorem is known as the explicit localization formula.
\begin{prop}[\cite{EGLoc}*{Theorem 2}]\label{prop:loc}
    Let $f\colon X\to Y$ be a proper $T$-equivariant morphism between smooth $T$-schemes, and suppose that $\CH_T^*(Y)$ is free as $\CH_T^*$-module. Then
    \[ f_*\xi=\sum_{F\subset X^T} \frac{f_*(i_F^*\xi)}{c_{top}^T(N_F)}, \]
    where the sum is taken over all the irreducible components $i_F:F\hookrightarrow X$ of the fixed locus $X^T$, and $N_F$ denotes the normal vector bundle of $F$ in $X$.
\end{prop}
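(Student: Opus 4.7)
The plan is to deduce the formula from the abstract equivariant localization theorem. Let $S \subset \CH^*_T := \CH^*(\cB T)$ be the multiplicative set generated by the nontrivial characters of $T$, viewed as first Chern classes of rank one $T$-representations. Because $\CH^*_T(Y)$ is free as a $\CH^*_T$-module, the localization map $\CH^*_T(Y) \to S^{-1}\CH^*_T(Y)$ is injective, so it is enough to prove the formula after inverting $S$, where division by $c_{top}^T(N_F)$ acquires a meaning.

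First I would establish the abstract localization statement: for a smooth $T$-scheme $X$ with fixed locus $X^T = \bigsqcup_F F$, the total pushforward $\bigoplus_F i_{F*}\colon \bigoplus_F \CH^*_T(F) \to \CH^*_T(X)$ becomes an isomorphism after inverting $S$. Using the equivariant localization exact sequence
\[ \CH^*_T(X^T) \to \CH^*_T(X) \to \CH^*_T(X \smallsetminus X^T) \to 0, \]
this reduces to showing $S^{-1}\CH^*_T(Z) = 0$ whenever $Z$ is a smooth $T$-scheme with $Z^T = \emptyset$. The latter is proved by noetherian induction on $\dim Z$: one can find a $T$-invariant open stratum of $Z$ whose reduced stabilizers are all contained in a proper subtorus $T' \subsetneq T$, and on such a stratum the fundamental class is annihilated by a nontrivial character of $T/T'$, which lies in $S$.

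With abstract localization in hand, two formal facts finish the argument. Distinct components $F \neq G$ of $X^T$ are disjoint, so $i_G^* i_{F*} = 0$ by base change. Each inclusion $i_F\colon F \hookrightarrow X$ is a regular embedding with normal bundle $N_F$, so the self-intersection formula yields $i_F^* i_{F*} = c_{top}^T(N_F) \cdot \mathrm{id}$. Since $T$ acts on $N_F$ without trivial summands, $c_{top}^T(N_F)$ restricts at every geometric point of $F$ to a product of nontrivial characters of $T$, hence becomes invertible in $S^{-1}\CH^*_T(F)$. For $\xi \in \CH^*_T(X)$, writing $\xi = \sum_F i_{F*} \alpha_F$ via abstract localization and applying $i_F^*$ gives $\alpha_F = i_F^* \xi / c_{top}^T(N_F)$, so that
\[ \xi = \sum_F i_{F*}\!\left(\frac{i_F^* \xi}{c_{top}^T(N_F)}\right) \quad \text{in } \ S^{-1}\CH^*_T(X). \]
Pushing forward along $f$ and using $f_* \circ i_{F*} = (f \circ i_F)_*$ yields the claimed formula in $S^{-1}\CH^*_T(Y)$, and it descends to $\CH^*_T(Y)$ by the injectivity observed at the outset.

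The real obstacle is the abstract localization theorem, which hinges on the vanishing of $S^{-1}\CH^*_T(Z)$ for fixed-point-free smooth $T$-schemes $Z$. This step is genuinely nontrivial and requires a careful use of equivariant stratifications by orbit type, or an induction along $T$-equivariant envelopes together with equivariant resolutions. Once this input is granted, the rest of the argument is pure formalism from functoriality of proper pushforward and the self-intersection formula.
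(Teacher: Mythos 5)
The paper does not actually prove this statement: it is quoted as Theorem~2 of Edidin--Graham's localization paper and used as a black box, so there is no internal proof to compare against. Your argument is, in substance, a correct reconstruction of the standard (Edidin--Graham) proof: injectivity of $\CH^*_T(Y)\to S^{-1}\CH^*_T(Y)$ from freeness, the abstract localization theorem giving surjectivity of $\bigoplus_F i_{F*}$ after inverting $S$, the self-intersection formula $i_F^*i_{F*}=c^T_{\mathrm{top}}(N_F)\cdot\mathrm{id}$ together with the observation that this class is a unit in $S^{-1}\CH^*_T(F)$ (it is a product of nontrivial characters modulo the nilpotent ideal $\CH^{>0}(F)\cdot\CH^*_T(F)$, since $T$ acts trivially on $F$ and with no trivial summand on $N_F$), and functoriality of proper pushforward. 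Note that only surjectivity of $\bigoplus_F i_{F*}$ after localization is needed, not bijectivity, since $\alpha_F$ is recovered by applying $i_F^*$; and the displayed formula should be read as $\sum_F (f\circ i_F)_*\bigl(i_F^*\xi/c^T_{\mathrm{top}}(N_F)\bigr)$, which is what you prove --- in the paper's applications the fixed components are isolated points, so the two readings agree.

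The one thin spot is the key vanishing $S^{-1}\CH^*_T(Z)=0$ for $Z^T=\emptyset$, which you rightly identify as the real content. As phrased, ``the fundamental class of the stratum is annihilated by a nontrivial character'' is not sufficient: you need the entire localized equivariant Chow group of each stratum to vanish, so that noetherian induction through the localization sequence closes. The standard way to get this (Edidin--Graham, Brion) is to choose a generic one-parameter subgroup $\Gm\to T$ with $X^{\Gm}=X^T$, reducing to $T=\Gm$, and then to show that for a fixed-point-free $\Gm$-scheme the generator $t\in\CH^*_{\Gm}$ acts nilpotently on $\CH^{\Gm}_*$, by stratifying into loci with constant finite stabilizer and identifying $t$ there with the first Chern class of a line bundle on a finite-dimensional quotient. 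Since you explicitly defer this input to the literature --- exactly as the paper itself does --- I would not count it as a gap, but the precise statement required is the vanishing of the whole localized group, not merely of fundamental classes.
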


Let $T\simeq\Gm^2$ be a split torus. If $\gamma=(\gamma_1,\gamma_2)$ is a pair of characters of $T$, we denote by $E_{\gamma}$ the rank two representation on which $T$ acts with weights $\gamma_1$ and $\gamma_2$.

Consider the following three equivariant morphisms:
\begin{align*}
    i\colon\PP(E_{\gamma}^{\vee})\longrightarrow \PP(\Sym^3E_{\gamma}^{\vee}),\quad &f\longmapsto f^3,\\
    \rho_1\colon\PP(E_{\gamma}^{\vee})\times\PP(\Sym^3E_{\gamma}^{\vee})\longrightarrow\PP(\Sym^6E_{\gamma}^{\vee}),\quad & (f,g)\longmapsto f^3g.
\end{align*}
We want to compute the pushforward along the maps above of some cycles in the $T$-equivariant Chow rings.
\begin{prop}\label{prop:formulas}
    We have:
    \begin{align*}
    i_*1 &= 3(h-2\gamma_1-\gamma_2)(h-\gamma_1-2\gamma_2)\\
    \rho_{1*} 1 &= 3(4h^2-24h(\gamma_1+\gamma_2)+20(2\gamma_1+\gamma_2)(\gamma_1+2\gamma_2)-36\gamma_1\gamma_2) \\
    \rho_{1*} h_1 &= h^3-3(\gamma_1+\gamma_2)h^2 + h(2(\gamma_1+\gamma_2)^2-44\gamma_1\gamma_2)+108\gamma_1\gamma_2(\gamma_1+\gamma_2) \\
    \rho_{2*}1 &= 9(h-5\gamma_1-\gamma_2)(h-4\gamma_1-2\gamma_2)(h-2\gamma_1-4\gamma_2)(h-\gamma_1-5\gamma_2)
    \end{align*}
\end{prop}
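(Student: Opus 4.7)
All four identities are pushforwards along $T$-equivariant morphisms between projective spaces of the form $\PP(\Sym^d E_\gamma^\vee)$, so the natural tool is the explicit Atiyah--Bott localization formula of Proposition~\ref{prop:loc}. Writing $a:=\gamma_1$ and $b:=\gamma_2$, on $\PP(\Sym^d E_\gamma^\vee)\simeq\PP^d$ the $T$-fixed locus consists of the $d+1$ monomial classes $[x^{d-k}y^k]$ with $k=0,\dots,d$; at the $k$-th such point the hyperplane class $h$ restricts to $(d-k)a+kb$, the top Chern class of the tangent space equals $\prod_{j\neq k}(k-j)(a-b)$, and the equivariant class of the fixed point in $\CH_T^*(\PP^d)$ is $\prod_{j\neq k}\bigl(h-(d-j)a-jb\bigr)$. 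Tabulating this data once for $d=1,3,6$ and identifying the image in $\PP(\Sym^6 E_\gamma^\vee)$ of each source fixed point under $i$, $\rho_1$ and $\rho_2$ reduces all four pushforwards to finite sums of explicit rational expressions in $a,b,h$.

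I would first check $i_*1$ as a model. The two fixed points $[x],[y]$ of $\PP(E_\gamma^\vee)$ map under $i$ to $[x^3],[y^3]\in\PP(\Sym^3 E_\gamma^\vee)$, with source tangent weights $a-b$ and $b-a$ respectively. The equivariant class of $[x^3]$ is $(h-(2a+b))(h-(a+2b))(h-3b)$ and that of $[y^3]$ is $(h-3a)(h-(2a+b))(h-(a+2b))$, so both contributions share the factor $(h-2a-b)(h-a-2b)$ and their sum collapses to $\bigl[(h-3b)-(h-3a)\bigr]/(a-b)=3$ times this common factor, reproducing the first formula.

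For $\rho_{1*}1$ and $\rho_{1*}h_1$ the same procedure applies to the eight fixed points $(p_i,q_j)$ of $\PP(E_\gamma^\vee)\times\PP(\Sym^3 E_\gamma^\vee)$: the tangent bundle splits as a direct sum, so its top Chern class is the product of the two factor contributions (readily computed from the table above), and the pullback of $h_1$ to $(p_i,q_j)$ equals $\gamma_i$, which supplies the weighting needed for $\rho_{1*}h_1$. The image fixed points in $\PP(\Sym^6 E_\gamma^\vee)$ are the appropriate monomials $[x^{6-m}y^m]$ obtained by multiplying $x^{3-(i-1)}y^{i-1}$ by $x^{3-(j-1)}y^{j-1}$, and the seven target fixed points absorb these eight contributions. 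The case of $\rho_{2*}1$ is handled identically using the fixed points of $\PP(E_\gamma^\vee)\times\PP(E_\gamma^\vee)$ and tracking which target fixed points in $\PP(\Sym^6 E_\gamma^\vee)$ are hit.

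\textbf{Main obstacle.} The difficulty is not conceptual but combinatorial. For each of the last three pushforwards the raw localization sum is a priori a polynomial in $h$ of degree up to $6$ over a power of $(a-b)$, whereas the stated answers have $h$-degree at most $4$ and are polynomial in $a,b$. The drastic cancellation is forced in principle, since the pushforward lies in the integral $T$-equivariant Chow ring of the target (a polynomial ring), but extracting the precise stated coefficients requires grouping the fixed-point contributions carefully --- most cleanly in pairs or quadruples sharing a common image fixed point in $\PP(\Sym^6 E_\gamma^\vee)$ --- and matching the identity coefficient-by-coefficient in $h$. Once this bookkeeping is done, every formula in the statement drops out.
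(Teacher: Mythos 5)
Your proposal is correct and follows essentially the same route as the paper: both apply the explicit localization formula of Proposition~\ref{prop:loc}, identify the torus-fixed points of $\PP(\Sym^d E_\gamma^\vee)$ with the monomials, express each fixed point's equivariant fundamental class as the product $\prod_{j\neq i}(h-j\gamma_1-(d-j)\gamma_2)$ via Lemma~\ref{lm:class hypersurface}, compute the tangent weights and the restriction of $h$, and sum the contributions. Your worked check of $i_*1$ matches the paper's answer, and the remaining three identities are, as you say, the same bookkeeping carried out over more fixed points.
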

\begin{proof}
    All the formulas above can be obtained by applying the localization formula of Proposition \ref{prop:loc}. For this we need the following ingredients:
    the fundamental class of the $T$-fixed points of $\PP(\Sym^d E_{\gamma}^{\vee})$, the $T$-equivariant top Chern class of the tangent space at the fixed points, and the class of the hyperplane class restricted to the equivariant Chow ring of the fixed points.
    
    A $T$-fixed point in $\PP(\Sym^d E_{\gamma}^{\vee})$ corresponds to a homogeneous monomial $x_0^{i}x_1^{d-i}$, where $0\leq i \leq d$. This point is a complete intersection of $d$ invariant hypersurfaces of equation $(x_0^{j}x_1^{d-j})^{\vee}=0$, for $j\neq i$.
    Applying the formula of Lemma \ref{lm:class hypersurface} we obtain
    \[ [(x_0^ix_1^{d-i})]_T = \prod_{0\leq j\leq d, j\neq i} (h-j\gamma_1-(d-j)\gamma_2). \]
    A similar expression holds for fixed points in $\PP(\Sym^d E_{\gamma}^{\vee})\times\PP(\Sym^e E_{\gamma}^{\vee})$, namely
    \[ [(x_0^ix_1^{d-i},x_0^{k}x_1^{e-k})]_T = \prod_{0\leq j\leq d, j\neq i} (h-j\gamma_1-(d-j)\gamma_2) \cdot  \prod_{0\leq \ell\leq e, \ell\neq k} (h-\ell\gamma_1-(e-\ell)\gamma_2). \]
    Observe that 
    \[ T_{(x_0^ix_1^{d-i})} = \left\langle  \frac{(x_0^jx_1^{d-j})}{(x_0^ix_1^{d-i})}\text{ for } j\neq i  \right\rangle \]
    and therefore
    \begin{align*}
        &c_{top}^T(T_{(x_0^ix_1^{d-i})}) = \prod_{0\leq j\leq d, j\neq i} (j-i)(\gamma_2-\gamma_1) = (-1)^ii!(d-i)!(\gamma_2-\gamma_1)^d \\
        &c_{top}^T(T_{(x_0^ix_1^{d-i},x_0^{k}x_1^{e-k})}) = (-1)^{i+k}i!(d-i)!k!(e-k)!(\gamma_1-\gamma_2)^{d+e}
    \end{align*} 
    Finally, a generator for the fiber of $\Ocal(1)$ over $(x_0^ix_1^{d-i})$ is given by $(x_0^ix_1^{d-i})^{\vee}$, hence
    \[ h|_{\CH^*_T((x_0^ix_1^{d-i}))}= i\gamma_1 + (d-i)\gamma_2 \]
    We can substitute the elements above in the expression given by localization formula. After some straightforward computations, we obtain the desired results.
\end{proof}

\subsection{Computation of $[\Dcal^3]$ and $\zeta$}
The main goal of this Subsection is to compute the relations in $\CH^*(\overline{\Mcal}_2)$ corresponding to the cycle class of $\Dcal^3$ and the element $\zeta$ which appear in Corollary \ref{cor:Chow M2bar abs}.
\begin{prop}\label{prop:class D3}
    We have $[\Dcal^3]=24\lambda_1^2-48\lambda_2$ in $\CH^2(\Vcal\smallsetminus\Zcal)$.
\end{prop}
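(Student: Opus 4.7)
The plan is to compute $[\Dcal^3]$ by the projectivization-and-pushforward technique. First I would observe that $\Dcal^3$ is a $\gm$-invariant cone of codimension $2$ in the rank-$7$ bundle $\Vcal$, so by the localization sequence (with $\CH^{2-7}(\Pcal)=0$) the class $[\Dcal^3]\in\CH^2(\Vcal\setminus\Zcal)$ is determined by its projectivization $[\PP(\Dcal^3)]\in\CH^2(\PP(\Vcal))$: more precisely, by the coefficient of $h^0$ when that class is expressed as a polynomial in the hyperplane class $h=c_1(\cO_{\PP(\Vcal)}(1))$.

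Next I would exhibit $\PP(\Dcal^3)$ as the image of the natural parametrization
\[ \rho\colon \PP(\Lambda^\vee)\times_{\Pcal}\PP(\Sym^3\Lambda^\vee)\arr\PP(\Sym^6\Lambda^\vee)\simeq\PP(\Vcal),\qquad (f,g)\longmapsto f^3g, \]
which is a proper, birational $\Pcal$-morphism since a generic sextic with a triple root has a unique triple root. The identification $\PP(\Sym^6\Lambda^\vee)\simeq\PP(\Vcal)$ relies on the globalization of $\Vcal\simeq\Sym^6\Lambda^\vee\otimes\det\Lambda^{\otimes 2}$ from Prop \ref{prop:Vcal0 quotient}, extended to all of $\Pcal$ via a dualizing-sheaf computation on the universal twisted conic. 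Applying Prop \ref{prop:formulas} with $\gamma_1=\alpha$, $\gamma_2=\beta$ the Chern roots of $\Lambda$ produces
\[ \rho_*1 = 12h_0^2-72h_0\lambda_1+120\lambda_1^2-48\lambda_2 \]
where $h_0$ is the hyperplane class of $\PP(\Sym^6\Lambda^\vee)$. The twist by $\det\Lambda^{\otimes 2}$ gives $h_0 = h+2\lambda_1$, and substituting yields $\rho_*1 = 12h^2-24h\lambda_1+24\lambda_1^2-48\lambda_2$ in $\CH^*(\PP(\Vcal))$; extracting the coefficient of $h^0$ produces $[\Dcal^3] = 24\lambda_1^2-48\lambda_2$.

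The main obstacle will be justifying the globalization of Prop \ref{prop:formulas} from its torus-equivariant setting over a point to the relative setting over $\Pcal$. Since the pushforward formula is ultimately a polynomial identity in $\lambda_1$, $\lambda_2$ and the hyperplane class, the splitting principle applied to the rank-$2$ bundle $\Lambda$ on $\Pcal$ should allow the extension, but this deserves care especially over the boundary stratum $\Pcal_1$ which is not the classifying stack of a torus. A secondary point is verifying the global identification $\Vcal\simeq\Sym^6\Lambda^\vee\otimes\det\Lambda^{\otimes 2}$ on $\Pcal_1$. As a fallback, one can first apply the argument above directly on $\Vcal_0$ (where everything is standard and one recovers $[\Dcal^3\cap\Vcal_0]=24\lambda_1^2-48\lambda_2$) and then compute $[\Dcal^3\cap\Vcal_1]$ explicitly using the presentation $\Vcal_1\simeq[V_1/(\ga^2\rtimes\Gamma)]$ of Prop \ref{prop:Vcal1 quotient}, verifying that its pushforward to $\CH^2(\Vcal\setminus\Zcal)$ contributes no $\lambda_1\delta_1$ or $\delta_1^2$ correction.
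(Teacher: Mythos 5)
Your computation over $\Vcal_0$ coincides with the paper's: reduce to the maximal torus of $\GL_2$, parametrize the projectivized triple-root locus by $(f,g)\mapsto f^3g$, push forward $1$ via Proposition \ref{prop:formulas}, and eliminate the hyperplane class; both routes give $[\Dcal^3_0]=24\lambda_1^2-48\lambda_2$ in $\CH^2(\Vcal_0)$. The gap is in the extension to all of $\Vcal\smallsetminus\Zcal$. Your primary route rests on the identification $\Vcal\simeq\Sym^6\Lambda^\vee\otimes\det\Lambda^{\otimes 2}$ over all of $\Pcal$, and this is false over $\Pcal_1$: with the presentation $\Vcal_1\simeq[V_1/\ga^2\rtimes\Gamma]$ and $\Lambda_1=\H^0(C,L)$ of weights $tu^{-1},tv^{-1}$, the torus $\gm^3\subseteq\overline{\Gamma}$ acts on $V_1$ with the seven weights $t^2$, $t^2u^{-2i}$, $t^2v^{-2i}$ ($i=1,2,3$), whereas $\Sym^6\Lambda_1^\vee$ twisted by any character $\chi$ has weights $\chi\, t^{-6}u^iv^{6-i}$; no $\chi$ matches. (What the paper actually uses, in Proposition \ref{prop:Ucal}, is $\PP(\Vcal)\simeq\PP(\pi_*\Lcal^{\otimes 6})$; but $\pi_*\Lcal^{\otimes 6}$ is not $\Sym^6\Lambda$ over $\Pcal_1$, since the multiplication map $\Sym^6\H^0(C,L)\to\H^0(C,L^{\otimes 6})$ collapses on a reducible twisted conic.) Correspondingly, the parametrization $\rho$ does not exist over $\Pcal_1$, so "a dualizing-sheaf computation" cannot rescue the globalization.

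Your fallback is the right idea but does not determine the class as stated. Knowing the restriction to $\Vcal_0$ pins down $[\Dcal^3]$ only up to a correction $a\lambda_1\delta_1+b\delta_1^2$, and restricting back to $\Vcal_1\smallsetminus\Zcal$ cannot recover $a$ and $b$: there $[\Zcal]=\lambda_1+\delta_1+\xi$ vanishes, which forces $\delta_1(\lambda_1+\delta_1)=0$, so the restriction of the correction is $(b-a)\delta_1^2$ and a one-parameter ambiguity survives. One could instead compute $i^*[\Dcal^3]$ in $\CH^2(\Vcal_1)$ before excising $\Zcal$ (where $i^*i_*$ is injective in the relevant degree), but that is a genuine $\Gamma$-equivariant computation of the same order of difficulty as the one for $[\Dcal^{3,3}]$ in Proposition \ref{prop:class D33}, and you have not carried it out. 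The paper closes this step by a completely different argument: in characteristic zero Mumford's relation $\mathrm{ch}_2(\Lambda)=0$, i.e.\ $24\lambda_1^2-48\lambda_2=0$, holds \emph{integrally} in $\CH^2(\overline{\Mcal}_2)$ by Pappas's integral Grothendieck--Riemann--Roch, and the presentation of Corollary \ref{cor:Chow M2bar abs} forces any degree-$2$ relation to be an integer multiple of $[\Dcal^3]$; comparing restrictions to $\Vcal_0$ shows the multiple is $1$. You need some input of this kind, or the honest computation over $\Vcal_1$, to finish the proof.
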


\begin{proof}
    We first prove that $[\Dcal^3_0]=24\lambda_1^2-48\lambda_2$ in $\CH^2(\Vcal_0)$. We adopt here the notation of Proposition \ref{prop:Vcal0 quotient}.
    
    Observe that $\PP(\Vcal_0)\simeq [\PP(V_0)/\GL_2]$. Moreover, if we denote $\overline{\Dcal_0^3}$ the projectivization of $\Dcal^{3}_0$ and $h$ the hyperplane class of $\PP(\Vcal_0)$, then by \cite{VisPGLp}*{Lemma 4.1} the cycle $[\Dcal^3_0]$ is equal to the cycle $[\overline{\Dcal^3_0}]$ after substituting $h$ with $2\lambda_1$.
    
    By \cite{EG}*{Proposition 6} we can identify $\CH_{\GL_2}^*(\PP(V_0))$ with the subring of symmetric elements of $\CH_T^*(\PP(V_0))$, where $T=\Gm^{2}$ and $V_0=\Sym^6 E_{t_1+t_2}^{\vee}$, a symmetric power of the dual of the representation on which $T$ acts diagonally. In particular, $\lambda_1=t_1+t_2$ and $\lambda_2=t_1t_2$.
    
    Therefore, we can apply Proposition \ref{prop:formulas} to compute $[\overline{\Dcal^3_0}]=\rho_{1*}1$: after straightforward computations we get the desired result.
    
    From \cite{Mum}*{\S 5} we know that the relation ${\rm ch}_2(\Lambda) = 24\lambda_1-48\lambda_2=0$ holds in $\CH^*(\overline{\Mcal}_2)$ up to torsion. As already observed in \cite{Lars}*{Remark 6.3}, it follows from \cite{Pap} that in characteristic zero this relation holds with integral coefficients. The characterization of $\CH^*(\overline{\Mcal}_2)$ given in Corollary \ref{cor:Chow M2bar abs} implies that Mumford's relation must be a multiple of $[\Dcal^3]$ in $\CH^*(\Vcal\smallsetminus\Zcal)$.
    
    On the other hand, our earlier computation of $[\Dcal^3_0]$ is equal to the restriction of Mumford's relation to $\CH^*(\Vcal_0)$, hence $[\Dcal^3]$ must be actually equal to $24\lambda_1^2-48\lambda_2$.
\end{proof}

\begin{prop}\label{prop:class zeta}
    Let $\zeta$ be the pushforward along $i\colon\Dcal^3\hookrightarrow\Vcal\smallsetminus\Zcal$ of a Chern root of $i^*\Lambda$. Then 
    $ \zeta=20\lambda_1\lambda_2-4\delta_1\lambda_2$ in $\CH^3(\Vcal\smallsetminus\Zcal)/([\Dcal^3])$.
\end{prop}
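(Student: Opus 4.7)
The strategy parallels that of Proposition~\ref{prop:class D3}: first compute the restriction $\zeta|_{\Vcal_0}$ by explicit $T$-equivariant localization, then invoke a globally valid codimension-$3$ identity on $\overline{\cM}_2$ (obtained via Pappas's integral Grothendieck--Riemann--Roch, valid in characteristic zero), and finally use the abstract presentation of Corollary~\ref{cor:Chow M2bar abs} to promote the computation to all of $\Vcal\setminus\Zcal$ modulo $[\Dcal^3]$.

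For the localization step I use $\Vcal_0\simeq[V_0/\GL_2]$ (Proposition~\ref{prop:Vcal0 quotient}) and work on $\PP(\Vcal_0)$ via a maximal torus $T\subset\GL_2$. The closure of $\Dcal_0^3$ in $\PP(V_0)\simeq\PP(\Sym^6 E_\gamma^\vee)$ is resolved by $\rho_1\colon\PP(E_\gamma^\vee)\times\PP(\Sym^3 E_\gamma^\vee)\to\PP(\Sym^6 E_\gamma^\vee)$, $(f,g)\mapsto f^3g$; under this resolution a Chern root of $i^*\Lambda$ (the one corresponding to the line $\text{span}(f)\subset E_\gamma^\vee$ at the triple root, or its complement) pulls back to $\pm h_1$ up to a class pulled back from $\PP(\Vcal_0)$, whose contribution to the pushforward is absorbed into a multiple of $\rho_{1*}1=[\Dcal_0^3]$. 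Applying Proposition~\ref{prop:formulas} to compute $\rho_{1*}h_1$ and specializing $h=2\lambda_1$ to descend from $\PP(\Vcal_0)$ to $\Vcal_0$ gives, after the routine cancellations,
\[
\zeta|_{\Vcal_0}\equiv 20\lambda_1\lambda_2\pmod{[\Dcal^3]},
\]
which matches $(20\lambda_1\lambda_2-4\delta_1\lambda_2)|_{\Vcal_0}$ since $\delta_1$ vanishes on $\Vcal_0$.

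For the global identity, I invoke Pappas's integral Grothendieck--Riemann--Roch applied to the universal curve $\pi\colon\Ccal\to\overline{\cM}_2$ and the relative dualizing sheaf $\omega_\pi$. Combining the codimension-$3$ part with Mumford's boundary formulas and the special features of genus two (in particular $\lambda_3=0$) produces the integral relation
\[
20\lambda_1\lambda_2-4\delta_1\lambda_2=0\ \text{ in }\CH^3(\overline{\cM}_2),
\]
analogous to the codimension-$2$ Mumford relation $24\lambda_1^2-48\lambda_2=0$ exploited in Proposition~\ref{prop:class D3}. By Corollary~\ref{cor:Chow M2bar abs} this lifts to an element of the ideal $([\Dcal^3],\zeta,[\Dcal^{3,3}])$ in $\CH^3(\Vcal\setminus\Zcal)$; since $[\Dcal^{3,3}]$ has codimension at least $4$, the expression must take the form $a[\Dcal^3]+b\zeta$ with $a\in\CH^1(\Vcal\setminus\Zcal)$ and $b\in\ZZ$. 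Restricting to $\Vcal_0$ and using the localization computation, the identity becomes $b\cdot 20\lambda_1\lambda_2\equiv 20\lambda_1\lambda_2\pmod{[\Dcal_0^3]}$ in $\CH^*(\Vcal_0)=\ZZ[\lambda_1,\lambda_2]$; since $20\lambda_1\lambda_2$ is non-zero modulo $24\lambda_1^2-48\lambda_2$ (a degree count in $\ZZ[\lambda_1,\lambda_2]$), this forces $b=1$, yielding $\zeta\equiv 20\lambda_1\lambda_2-4\delta_1\lambda_2\pmod{[\Dcal^3]}$.

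The main obstacle is pinning down the precise integral codimension-$3$ relation on $\overline{\cM}_2$ produced by Pappas's GRR, including the exact integer coefficient $4$ of the boundary contribution $\delta_1\lambda_2$; this step is also what restricts the argument to characteristic zero and necessitates the additional reduction to characteristic $\neq 2,3$ anticipated in the introduction.
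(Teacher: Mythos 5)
Your proposal is correct and follows essentially the same route as the paper's own proof: restrict to $\Vcal_0$, compute $\rho_{1*}h_1$ by torus localization and substitute $h=2\lambda_1$ to get $20\lambda_1\lambda_2$, then use the integral Mumford relation $20\lambda_1\lambda_2-4\delta_1\lambda_2=0$ (Mumford's rational computation upgraded via Pappas's integral GRR) together with the abstract presentation of $\CH^*(\overline{\Mcal}_2)$ to pin down $\zeta$ modulo $[\Dcal^3]$. Your explicit argument that the coefficient $b$ of $\zeta$ equals $1$ (using that $[\Dcal^{3,3}]$ lives in codimension $4$ and a degree count in $\ZZ[\lambda_1,\lambda_2]$) is a slightly more detailed rendering of the paper's "the two relations coincide on $\Vcal_0$" step.
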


\begin{proof}
    We adopt here the same strategy used to compute $[\Dcal^3]$ in Proposition \ref{prop:class D3}, i.e. we compute the restriction of $\zeta$ to $\CH^*(\Vcal_0)/([\Dcal^3])$ and then we use the Mumford relations to deduce our formula.
    
    Consider again the $T$-equivariant proper morphism of projective spaces
    \[ \rho_{1*}\colon\PP(E_{\gamma}^{\vee})\times\PP(\Sym^3E_{\gamma}^{\vee})\longrightarrow\PP(\Sym^6E_{\gamma}^{\vee}),\quad  (f,g)\longmapsto f^3g, \]
    with $\gamma=(t_1,t_2)$. The subgroup of $\im(\rho_{1*})$ of degree $3$ is generated, up to multiples of $\rho_{1*}1$, by $\rho_{1*}h_1$, and by Proposition \ref{prop:formulas} we know that
    \[ \rho_{1*}h_1 = h^3-3\lambda_1 h^2 + (2\lambda_1^2-44\lambda_2)h + 108    \lambda_1\lambda_2. \]
    
    Let $i_0\colon\Dcal^3_0\hookrightarrow \Vcal_0$ be the restriction of $i$ to $\Vcal_0$. Then Proposition \ref{pr:Wcal} implies that the degree $3$ subgroup of $\im(i_{0*})$ is generated, up to  multiples of $[\Dcal^3]$, by a single element $\zeta_0$.
    
    To compute the pullback of $\rho_{1*}h_1$ to $\CH^3(\Vcal_0)$ one has to substitute $h$ with $2\lambda_1$, which gives back $20\lambda_1\lambda_2$. This element gives by construction a generator of $\im(i_*)$, which is not a multiple of $[\Dcal_3]$. Therefore, this cycle must coincide with $\zeta_0$ in $\CH^*(\Vcal_0)/([\Dcal^3])$.
    
    Mumford proved in \cite{Mum}*{\S 5} that the relation $20\lambda_1\lambda_2-4\delta_1\lambda_2=0$ holds up to torsion in $\CH^*(\overline{\Mcal}_2)$. By the same argument of Proposition \ref{prop:class D3}, i.e. the main result of \cite{Pap}, this relation must hold integrally. Moreover, this relation is independent of the other relations in $\CH^*(\overline{\Mcal_2})$ that we have found so far, which implies that it must be a multiple of $\zeta$ in $\CH^*(\Vcal\smallsetminus\Zcal)/([\Dcal^3])$, by Proposition \ref{prop:CH M2bar abs}.
    
    On the other hand, our earlier computation shows that, once both relations are restricted to $\CH^*(\Vcal_0)/([\Dcal_3^0])$, they coincide: this forces the equality $\zeta=20\lambda_1\lambda_2+4\delta_1\lambda_1$, up to multiples of $[\Dcal^3]$.
\end{proof}
\subsection{Computation of $[\Dcal^{3,3}]$}
\begin{prop}\label{prop:class D33}
    We have $[\Dcal^{3.3}]=0$ in $\CH^*(\Vcal\smallsetminus\Zcal)/([\Dcal^3],\zeta)$.
\end{prop}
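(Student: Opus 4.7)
The plan is to follow the pattern of Propositions \ref{prop:class D3} and \ref{prop:class zeta}: first compute the restriction $[\Dcal^{3,3}]|_{\Vcal_0}$ by equivariant intersection theory, and then extend the resulting identity to $\Vcal\smallsetminus\Zcal$ using the localization exact sequence together with the $2$-torsion bound coming from Proposition \ref{prop:Ucal}.

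On $\Vcal_0\simeq[V_0/\GL_2]$, passing to a maximal torus $T\subset\GL_2$ as in the proof of Proposition \ref{prop:class D3}, the projectivization $\overline{\Dcal^{3,3}_0}\subseteq\PP(V_0)$ coincides with the image of the closed $T$-equivariant embedding
\[ \rho_2\colon\PP(\Sym^2 E_\gamma^\vee)\longrightarrow\PP(\Sym^6 E_\gamma^\vee),\quad q\longmapsto q^3, \]
since a binary sextic with two (possibly coincident) triple roots is precisely the cube of a binary quadratic. Applying the formula for $\rho_{2*}1$ in Proposition \ref{prop:formulas} and substituting $h=2\lambda_1$ as in the proof of Proposition \ref{prop:class D3}, together with $\lambda_1=\gamma_1+\gamma_2$ and $\lambda_2=\gamma_1\gamma_2$, a short calculation yields
\[ [\Dcal^{3,3}_0]=576\lambda_2^2-108\lambda_1^2\lambda_2\ \in\ \CH^4(\Vcal_0). \]
Since $\delta_1$ vanishes on $\Vcal_0$, we have $[\Dcal^3]|_{\Vcal_0}=24\lambda_1^2-48\lambda_2$ and $\zeta|_{\Vcal_0}=20\lambda_1\lambda_2$, and the elementary identity
\[ 576\lambda_2^2-108\lambda_1^2\lambda_2=-12\lambda_2\bigl(24\lambda_1^2-48\lambda_2\bigr)+9\lambda_1\bigl(20\lambda_1\lambda_2\bigr) \]
exhibits $[\Dcal^{3,3}]|_{\Vcal_0}$ as an element of the ideal $([\Dcal^3]|_{\Vcal_0},\zeta|_{\Vcal_0})$.

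To promote this identity to $\Vcal\smallsetminus\Zcal$, set $\Theta:=[\Dcal^{3,3}]+12\lambda_2[\Dcal^3]-9\lambda_1\zeta$. By the previous step, $\Theta$ restricts to zero on $\Vcal_0$, so by the localization exact sequence it is the pushforward of a class from $\Vcal_1\smallsetminus\Zcal_1$; in particular, $\Theta$ is a multiple of $\delta_1$ in $\CH^*(\Vcal\smallsetminus\Zcal)$. Combined with the bound $2[\Dcal^{3,3}]\in([\Dcal^3],\zeta)$ established in the first half of the proof of Proposition \ref{prop:Ucal}, this shows that $\Theta$ is at worst $2$-torsion modulo $([\Dcal^3],\zeta)$. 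To eliminate the remaining ambiguity, I would then use the equivariant presentation $\Vcal_1\simeq[V_1/(\Ga^2\rtimes\Gamma)]$ of Proposition \ref{prop:Vcal1 quotient} to compute $[\Dcal^{3,3}]|_{\Vcal_1\smallsetminus\Zcal_1}$ directly: the locus of pairs of cubics $(f,g)$ satisfying the matching condition $f(0,1)=g(0,1)$ and possessing two triple roots stratifies according to whether the two triple roots lie on the same irreducible component of the twisted conic, on opposite components, or coalesce at the node; each stratum is the image of an equivariant monomial map between $\Gamma$-equivariant projective bundles, whose pushforward can be read off from the localization formula of Proposition \ref{prop:loc}.

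The main obstacle will be this last computation on $\Vcal_1$: the matching condition at the node and the $\rC_2$-symmetry of $\Gamma$ make the tracking of characters and of the various strata delicate. However, the preceding $2$-torsion reduction should significantly shrink the work: once $\Theta$ is known to be $2$-torsion and supported on $\Vcal_1$, the explicit description of $\CH^*(\Pcal_1)$ in Proposition \ref{prop:CH Lcaluno} reduces the remaining verification to a short linear-algebra check in the quotient $\CH^*(\Vcal_1\smallsetminus\Zcal_1)/([\Dcal^3],\zeta)$, from which the desired congruence $\Theta\equiv 0$ should follow cleanly.
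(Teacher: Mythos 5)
Your computation over $\Vcal_0$ is correct and is a genuinely different entry point from the paper's: identifying $\overline{\Dcal^{3,3}_0}$ with the image of $q\mapsto q^3$, substituting $h=2\lambda_1$ in $\rho_{2*}1$ to get $576\lambda_2^2-108\lambda_1^2\lambda_2$, and the identity expressing this class in terms of $24\lambda_1^2-48\lambda_2$ and $20\lambda_1\lambda_2$ all check out, and they agree with the paper's formula $[\Dcal^{3,3}_1]=36\lambda_2(\delta_1^2-2\lambda_1\delta_1+16\lambda_2-3\lambda_1^2)$ specialized at $\delta_1=0$. The $2$-torsion bound from Proposition \ref{prop:Ucal} is also correctly invoked.

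The gap is the final step. Knowing that $\Theta$ is $2$-torsion modulo $([\Dcal^3],\zeta)$ and is supported over $\Pcal_1$ does not pin it down: in codimension $4$ the quotient $\CH^*(\Vcal\smallsetminus\Zcal)/([\Dcal^3],\zeta)$ a priori contains several independent nonzero $2$-torsion classes in the image of the pushforward from $\Vcal_1\smallsetminus\Zcal_1$ (multiples of $\delta_1(\lambda_1+\delta_1)$ such as $\lambda_2\delta_1(\lambda_1+\delta_1)$ and $\lambda_1^2\delta_1(\lambda_1+\delta_1)$), and nothing you have established excludes $\Theta$ from being one of them. Deciding this requires actually computing $[\Dcal^{3,3}_1]$ in $\CH^*_{\Gamma}(V_1)$ --- the $\delta_1$-dependent part $36\lambda_2\delta_1^2-72\lambda_1\lambda_2\delta_1$ of that class is invisible from $\Vcal_0$ and must be shown to lie in the ideal --- and that is exactly the computation your last paragraph defers. (Incidentally, your proposed stratification is off: since the relevant line bundle has bidegree $(3,3)$ on a singular fiber, one branch cannot carry two distinct triple roots, so there is a single stratum, one triple root on each branch, namely the locus of pairs $(f^3,g^3)$ with $f,g$ linear and $f(0,1)=g(0,1)$.) The paper's proof does this computation head-on: it skips $\Vcal_0$ altogether, uses the injectivity of $\CH^*(\Vcal)\to\CH^*(\Vcal_1)$, and evaluates the class of the locus of pairs $(f^3,g^3)$ by restricting to the two subgroups $\Gm^{2}\times\Gm$ and $\mmu_2\times\C_2\times\Gm$ of $\Gamma$, where the second restriction vanishes. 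Until you carry out an equivalent computation over $\Vcal_1$ (even just modulo $2$ and modulo the ideal, which is not obviously easier), the proof is incomplete.
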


\begin{proof}
First observe that the pullback morphism $\CH^*(\Vcal)\to\CH^*(\Vcal_1)$ is injective, hence to compute $[\Dcal^{3,3}]$ we can equivalently compute its pullback $[\Dcal^{3,3}_1]$ in $\CH^*(\Vcal_1)$. By Proposition \ref{prop:Vcal1 quotient}, this ring is isomorphic to the equivariant Chow ring $\CH_{\Gamma}^*(V_1)$.

Recall that $\Gamma$ is the quotient of $\overline{\Gamma}:=\Gm^{3}\rtimes \C_2$ by $\mmu_{2}=\langle (-1,-1,-1,\id)\rangle$. We also observed that $\Gamma\simeq \Gm^{ 3}\rtimes \C_2$, and that the quotient homomorphism is given by
\[ (u,v,t,\sigma) \in \overline{\Gamma} \longmapsto (u^{-1}t,v^{-1}t,uv,\sigma)\in \Gamma.  \]

The representation $V_1$ is the vector space of pairs of cubic binary forms $(f,g)$ such that $f(0,1)=g(0,1)$. The action of $\overline{\Gamma}$ on $V_1$ is
\[ (u,v,t,\id)\cdot (f(x_0,x_1),g(x_0,x_1)) = (t^2f (u^{-2}x_0,x_1),t^2g(v^{-2}x_0,x_1)), \]
and $\sigma$ exchanges $f$ and $g$. If $(a,b,d,\id)$ indicates an element of $\Gamma$, then $u^2=a^{-1}bd$, $v^2=ab^{-1}d$ and $t^2=abd$. From this the action of $\Gamma$ on $V_1$ can be easily reconstructed.

By construction, the stack $\Dcal^{3,3}_1$ is the quotient by $\Gamma$ of the closed subscheme of $V_1$ whose points are the pairs $(f^3,g^3)$ with $f(0,1)=g(0,1)$.

Consider the injective homomorphisms of groups
\begin{align*}
    \Gm^{ 2}\times \Gm \hookrightarrow \Gamma,&\quad (a,b,d)\longmapsto (a,b,d,\id), \\
    \mmu_{2}\times\C_2\times\Gm \hookrightarrow \Gamma,&\quad (-1,\sigma,d)\longmapsto (-1,-1,d,\sigma).
\end{align*}
These maps induce by restriction a ring homomorphism
\[ \CH^*_{\Gamma}(V_1) \longmapsto \CH^*_{\Gm^{ 2}\times \Gm}(V_1)\times \CH^*_{\mmu_{2}\times\C_2\times\Gm}(V_1) \]
which is injective because of \cite{VisPGLp}*{Proposition 6.4.(d)}. This reduces the computation of $[\Dcal^{3,3}_1]$ to the computation of its restrictions to the two equivariant Chow rings on the right.

First we compute $[\Dcal^{3,3}_1]$ in $\CH^*_{\mmu_{2}\times\C_2\times\Gm}(V_1)$.
Define $E_{\eta}$, $E_{\xi}$ and $E_{\delta}$ as the rank one representations determined respectively by the characters
\begin{align*}
     \chi_\eta : \mmu_{2}\times\C_2\times\Gm \longrightarrow \mmu_{2} \\
     \chi_\xi : \mmu_{2}\times\C_2\times\Gm \longrightarrow \C_2  \\
     \chi_\delta : \mmu_{2}\times\C_2\times\Gm \longrightarrow \Gm . 
\end{align*}
Then the first Chern classes of these representations, which we denote $\eta$, $\xi$ and $\delta$, generate $\CH^*_{\mmu_{2}\times\C_2\times\Gm}(V_1)$.

The subgroup $\mmu_{2}$ acts trivially on $V_1$, hence we only need to take into account the action of $\C_2\times\Gm$. Let $E$ be the rank two representation of $\Gm$ of weights $(\delta, 0)$. Then we have an equivariant decomposition
\[\left(\Sym^3E_{(\delta,0)}^{\vee}\right)\oplus \left(\Sym^3E_{(\delta,0)}^{\vee}\otimes {\rm sgn}\right) \xrightarrow{\simeq} V_1,\quad (f,g)\longmapsto (f+g,f-g) \]
where ${\rm sgn}$ is the sign representation of $\C_2$. In other terms, the subgroup $\C_2$ acts trivially on the first factor and as multiplication by $-1$ on the other factor.

Let $Z_3\subset \Sym^3 E_{(\delta,0)}^{\vee} $ be the invariant subscheme of forms with a triple root. Then:
\[ [\Dcal_1^{3,3}]_{\C_2\times\Gm} = [Z_3]_{\Gm}\cdot [Z_3]_{\C_2\times\Gm}.\]
We claim that $[Z_3]_{\Gm}$ is zero in $\CH^2(\Sym^3 E_{(\delta,0)}^{\vee} )$: this will imply that $[\Dcal_1^{3,3}]$ is zero in $\CH^*_{\mmu_{2}\times\C_2\times\Gm}(V_1)$.

For this, we use once again the trick of passing to the projectivization. Call $h$ the hyperplane class of $\PP(\Sym^3 E_{(\delta,0)}^{\vee})$.
Observe that $\Sym^3 E_{(\delta,0)}^{\vee}\smallsetminus\{0\}$ is the $\Gm$-torsor associated to the equivariant line bundle $\Ocal(-1)\otimes E_{\delta}$. By \cite{VisPGLp}*{Lemma 4.1} the kernel of the pullback morphism
\[\CH^*_{\mmu_{2}\times\C_2\times\Gm}(\PP(\Sym^3 E_{(\delta,0)}^{\vee}))\longrightarrow \CH^*_{\mmu_{2}\times\C_2\times\Gm}(\Sym^3 E_{(\delta,0)}^{\vee}\smallsetminus\{0\}) \]
is then equal to the equivariant first Chern class of $\Ocal(-1)\otimes E_{\delta}$, which is $\delta-h$. In other terms, the class $[Z_3]$ is equal to the class of its projectivization $[\overline{Z}_3]$ after substituting $h$ with $\delta$.

We can explicitly compute $[\overline{Z}_3]$ by means of Proposition \ref{prop:formulas}. We obtain:
\[ [\overline{Z}_3]=3(h-2\delta)(h-\delta), \]
hence after substituting $h$ with $\delta$ we get zero, as claimed.

Next we compute the class of $[\Dcal^{3,3}_1]$ in $\CH^*_{\Gm^{2}\times\Gm}(V_1)$. Let $\alpha$, $\beta$ and $\delta$ be the generators of this Chow ring, corresponding to the characters $u^{-1}t$, $v^{-1}t$ and $uv$.

Set $\gamma_1=\beta+\delta-\alpha$ and $\gamma_2=\alpha+\delta-\beta$, so that they correspond respectively to the characters $u^2$ and $v^2$. Define the representation
\[ U_1:=\left(\Sym^3 E_{\gamma_1,0}^{\vee}\oplus\Sym^3 E_{\gamma_2,0}^{\vee}\right)\otimes E_{\alpha+\beta+\delta}. \]
Then $V_1$ is a subrepresentation of $U_1$, and $[\Dcal^{3,3}_1]$ is the pullback to $V_1$ of the equivariant cycle class of $Y$, the subscheme of pairs of the form $(f^3,g^3)$. As the pullback induces an isomorphism at the level of Chow rings, it is completely equivalent to compute $[Y]$ instead of $[\Dcal^{3,3}_1]$.

To compute the class of $Y$, consider its image $\overline{Y}$ in $\PP(\Sym^3E_{\gamma_1,0})\times \PP(\Sym^3E_{\gamma_2,0})$. The pullback 
\[ \CH^*_{\Gm^{3}}(\PP(\Sym^3E^{\vee}_{\gamma_1,0})\times \PP(\Sym^3E^{\vee}_{\gamma_2,0}))\longrightarrow \CH^*_{\Gm^{3}}(U_1\smallsetminus\{0\}) \]
is surjective with kernel generated by $h_1-(\alpha+\beta+\delta)$ and $h_2-(\alpha+\beta+\delta)$, by the same argument used in the earlier case.
Therefore, to compute $[Y]$ is enough to compute $[\overline{Y}]$ and then substitute $\alpha+\beta+\delta$ to both $h_1$ and $h_2$.

The class of $\overline{Y}$ is equal to $(i\times i )_*1$, where
\[ i\times i : \PP(E^{\vee}_{(\gamma_1,0)})\times \PP(E^{\vee}_{(\gamma_2,0)}) \longrightarrow \PP(\Sym^3E^{\vee}_{\gamma_1,0})\times \PP(\Sym^3E_{\gamma_2,0}),\quad (f,g)\mapsto (f^3,g^3).\]
It is easy to check that $(i\times i )_*1=i_*1\cdot i_*1$, which is equal by Proposition \ref{prop:formulas} to
\[ 9(h_1-2\gamma_1)(h_1-\gamma_2)(h_2-2\gamma_2)(h_2-\gamma_2). \]
We substitute $\gamma_1$ with $\beta+\delta-\alpha$, $\gamma_2$ with $\alpha+\delta-\beta$ and both $h_1$ and $h_2$ with $\alpha+\beta+\delta$, and we get
\[ [\Dcal^{3,3}_1]=[Y]=36\alpha\beta(\delta^2-2(\alpha+\beta)\delta + 16\alpha\beta -3(\alpha+\beta)^2) \]
in $\CH^*_{\Gm^{3}}(V_1)$.

The injectivity of the ring homomorphism 
\[\CH^*_{\Gamma}(V_1) \longmapsto \CH^*_{\Gm^{2}\times \Gm}(V_1)\times \CH^*_{\mmu_{2}\times\C_2\times\Gm}(V_1) \]
together with the fact that the restriction of $[\Dcal^{3,3}_1]$ to $\CH^*_{\mmu_{2}\times\C_2\times\Gm}(V_1)$ is zero implies that we only have to substitute $\lambda_1$ to $\alpha+\beta$ and $\lambda_2$ to $\alpha\beta$ in the expression above to finally compute $[\Dcal^{3,3}_1]$ in $\CH^*_{\Gamma}(V_1)$. We obtain:
\[ [\Dcal^{3,3}_1] = 36\lambda_2(\delta^2-2\lambda_1\delta + 16\lambda_2 -3\lambda_1^2). \]

We easily deduce from the injectivity of the ring homomorphism $\CH^*(\Vcal)\to\CH^*(\Vcal_1)$ that the same expression holds for $[\Dcal^{3,3}]$ in $\CH^*(\Vcal)$. A straightforward computation shows that $[\Dcal^{3,3}]$ belongs to the ideal $([\Zcal],[\Dcal^{3,3}],\zeta)$, and concludes the proof.
\end{proof}
Putting all together, we obtain our main result.
\begin{thm}\label{thm:Chow M2bar}
    Suppose ${\rm char}(k)=0$. Then:
    \[ \CH^*(\overline{\Mcal}_2)\simeq \ZZ[\lambda_1,\lambda_2,\delta_1]/(2\delta_1^2+2\lambda_1\delta_1, \delta_1^3+\delta_1^2\lambda_1, 24\lambda_1^2-48\lambda_2,20\lambda_1\lambda_2-4\delta_1\lambda_2) \]
\end{thm}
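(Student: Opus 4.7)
The plan is to deduce the statement directly from the abstract presentation of $\CH^*(\overline{\Mcal}_2)$ established in Corollary \ref{cor:Chow M2bar abs}, by substituting into the ideal of relations the explicit expressions for the remaining generators $[\Dcal^3]$, $\zeta$ and $[\Dcal^{3,3}]$ that were computed in Propositions \ref{prop:class D3}, \ref{prop:class zeta} and \ref{prop:class D33}.

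Concretely, I would start from
\[ \CH^*(\overline{\Mcal}_2) \simeq \ZZ[\lambda_1,\lambda_2,\delta_1]/(2\delta_1(\lambda_1+\delta_1),\delta_1^2(\lambda_1+\delta_1),[\Dcal^{3}],\zeta, [\Dcal^{3,3}]) \]
and first expand the two generators involving only $\lambda_1$ and $\delta_1$ as $2\delta_1^2+2\lambda_1\delta_1$ and $\delta_1^3+\delta_1^2\lambda_1$, which are precisely the first two relations in the desired presentation. Next, Proposition \ref{prop:class D3} allows me to replace $[\Dcal^3]$ by $24\lambda_1^2-48\lambda_2$, producing the third relation. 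Then Proposition \ref{prop:class zeta} identifies $\zeta$ with $20\lambda_1\lambda_2-4\delta_1\lambda_2$ modulo $[\Dcal^3]$, so once the previous substitution has been performed, $\zeta$ contributes the fourth relation $20\lambda_1\lambda_2-4\delta_1\lambda_2$. Finally, Proposition \ref{prop:class D33} asserts that $[\Dcal^{3,3}]$ already belongs to the ideal generated by $[\Dcal^3]$ and $\zeta$, so this last generator is redundant and can be dropped from the presentation.

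Collecting these substitutions yields exactly the presentation in the statement. There is no real obstacle at this step, as all of the technical work has been discharged by the earlier propositions; the characteristic-zero hypothesis enters only through Propositions \ref{prop:class D3} and \ref{prop:class zeta}, which relied on the integral Grothendieck--Riemann--Roch theorem of Pappas to identify $[\Dcal^3]$ and $\zeta$ with Mumford's relations rather than with unknown integer multiples thereof. The present deduction is therefore purely a matter of formal bookkeeping within the ring $\ZZ[\lambda_1,\lambda_2,\delta_1]$, and in particular requires no further geometric input.
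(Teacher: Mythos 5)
Your proposal is correct and follows exactly the paper's own argument: the proof of Theorem \ref{thm:Chow M2bar} in the paper is precisely the assembly of Corollary \ref{cor:Chow M2bar abs} with the explicit classes from Propositions \ref{prop:class D3}, \ref{prop:class zeta} and \ref{prop:class D33}, with the characteristic-zero hypothesis entering only through the appeal to Pappas's integral Grothendieck--Riemann--Roch in the first two of those computations. Nothing is missing.
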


\subsection{Extension to positive characteristic}
In this last Subsection we show how to extend the computations of the previous part to positive characteristic.

Let $R$ be a discrete valuation ring, with fraction field $K$ of characteristic zero and residue field $k$ of characteristic $p>3$. All the stacks defined so far can actually be defined over $\Spec(R)$. Moreover, the relative intersection theory developed in \cite{Ful}*{\S 20.3} can be easily extended so to apply also to quotient stacks, by using equivariant approximations as in \cite{EG}.

Therefore, by \cite{Ful}*{Corollary 20.3} there is a well defined specialization morphism
\[  \CH^*(\Vcal_K\smallsetminus\Zcal_K) \xrightarrow{\rm sp} \CH^*(\Vcal_k\smallsetminus\Zcal_k) \]
which is a ring homomorphism.

Our computation of $\CH^*(\Vcal\smallsetminus\Zcal)$ is independent of the characteristic, and it shows that the top horizontal arrow in the diagram above is an isomorphism of rings: by construction, it sends $\lambda_{i,K}$ to $\lambda_{i,k}$ and $\delta_{1,K}$ to $\delta_{1,k}$. Moreover, it also sends $[\Dcal^3_{K}]$ to $[\Dcal^3_{k}]$, $\zeta_K$ to $\zeta_k$ and $[\Dcal_K^{3,3}]$ to $[\Dcal^{3,3}_k]$. This immediately implies that
\[ ([\Dcal^3_k],\zeta_k,[\Dcal^{3,3}_k]) = (24\lambda_1^2-48\lambda_2,20\lambda_1\lambda_2-4\delta_1\lambda_2)\]
in $\CH^*(\Vcal_k\smallsetminus\Zcal_k)$.

Now we apply Corollary \ref{cor:Chow M2bar abs}, which holds over every field of characteristic $\neq 2,3$ to extend the computation of Theorem \ref{thm:Chow M2bar} over more general fields.
\begin{thm}\label{thm:Chow M2bar ii}
    Suppose that the ground field has characteristic $\neq 2, 3$. Then:
    \[ \CH^*(\overline{\Mcal}_2)\simeq \ZZ[\lambda_1,\lambda_2,\delta_1]/(2\delta_1^2+2\lambda_1\delta_1, \delta_1^3+\delta_1^2\lambda_1, 24\lambda_1^2-48\lambda_2,20\lambda_1\lambda_2-4\delta_1\lambda_2) \]
\end{thm}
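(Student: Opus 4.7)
The plan is to reduce to the characteristic zero case (Theorem \ref{thm:Chow M2bar}) via a specialization argument, using Corollary \ref{cor:Chow M2bar abs} as the characteristic-free backbone of the argument.

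First I would work over a discrete valuation ring $R$ with fraction field $K$ of characteristic zero and residue field $k$ of characteristic $p > 3$. All of the constructions in Sections \ref{sec:twisted conics}--\ref{sec:abstract}---the stacks $\Pcal$, $\Vcal$, $\Zcal$, $\Dcal^{3}$, $\Dcal^{3,3}$, and the cycle $\zeta$---make sense over $\Spec R$, and their formation commutes with base change to $K$ and to $k$ (everything is defined by pushforwards of coherent sheaves, relative spectra of symmetric algebras, and scheme-theoretic images of proper morphisms, and the key flatness properties are preserved). Following \cite{Ful}*{\S 20.3}, suitably adapted to the equivariant setting via the approximations of \cite{EG}, one has a ring specialization homomorphism
\[ \mathrm{sp}\colon \CH^{*}(\Vcal_{K}\smallsetminus\Zcal_{K}) \longrightarrow \CH^{*}(\Vcal_{k}\smallsetminus\Zcal_{k}). \]

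Next I would observe that the computation of $\CH^{*}(\Pcal)$ carried out in Section \ref{sec:chow P}, and therefore of $\CH^{*}(\Vcal\smallsetminus\Zcal)$ via homotopy invariance combined with Proposition \ref{prop:class Zcal}, is entirely characteristic-independent: it only uses the presentations of $\Pcal_{0}$ and $\Pcal_{1}$ as classifying stacks of explicit linear algebraic groups, the patching Lemma \ref{lm:Atiyah}, and equivariant intersection theory of representations. Consequently the specialization map sends the generators $\lambda_{i,K}$ and $\delta_{1,K}$ to $\lambda_{i,k}$ and $\delta_{1,k}$ and is a ring isomorphism. Moreover, since $\Dcal^{3}$, $\Dcal^{3,3}$ and $\zeta$ are defined functorially in families over $R$, the specialization map also carries $[\Dcal^{3}_{K}]\mapsto [\Dcal^{3}_{k}]$, $\zeta_{K}\mapsto \zeta_{k}$, and $[\Dcal^{3,3}_{K}]\mapsto [\Dcal^{3,3}_{k}]$.

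Combining these facts with Theorem \ref{thm:Chow M2bar}---whose proof went through the classes $[\Dcal^{3}_{K}]$, $\zeta_{K}$, $[\Dcal^{3,3}_{K}]$ and identified them (modulo the previously known relations) with $24\lambda_{1}^{2}-48\lambda_{2}$, $20\lambda_{1}\lambda_{2}-4\delta_{1}\lambda_{2}$, and $0$ respectively---gives the identification
\[ \bigl([\Dcal^{3}_{k}],\,\zeta_{k},\,[\Dcal^{3,3}_{k}]\bigr) = \bigl(24\lambda_{1}^{2}-48\lambda_{2},\,20\lambda_{1}\lambda_{2}-4\delta_{1}\lambda_{2}\bigr) \]
inside $\CH^{*}(\Vcal_{k}\smallsetminus\Zcal_{k})$. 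Finally I would invoke Corollary \ref{cor:Chow M2bar abs}, which is valid over any field of characteristic $\neq 2,3$, to obtain the presentation of $\CH^{*}(\overline{\Mcal}_{2,k})$ by quotienting the specialized presentation of $\CH^{*}(\Vcal_{k}\smallsetminus\Zcal_{k})$ by the ideal $([\Dcal^{3}_{k}],\zeta_{k},[\Dcal^{3,3}_{k}])$, and then to pass from the $k$ obtained as the residue field of some $R$ to an arbitrary field of characteristic $\neq 2,3$ (which is automatic, since both sides of the claimed isomorphism are base-change invariant for field extensions).

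The main obstacle I anticipate is verifying that the specialization map is a well-defined ring homomorphism in this stacky, non-representable setting, and that it commutes with the explicit formations of $\Dcal^{3}$, $\Dcal^{3,3}$, and $\zeta$ as cycles. Once specialization is set up---via the standard trick of equivariant approximation, so that everything reduces to the scheme-theoretic specialization of \cite{Ful}*{\S 20}---the remaining steps are formal bookkeeping, since no new geometric input is required: the role of Pappas's integral Grothendieck--Riemann--Roch is confined to the characteristic zero computations of Propositions \ref{prop:class D3} and \ref{prop:class zeta}, and we only need to transport their outputs along $\mathrm{sp}$.
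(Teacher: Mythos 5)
Your proposal is correct and follows essentially the same route as the paper: a specialization homomorphism over a DVR with characteristic-zero fraction field (Fulton \S 20.3 plus equivariant approximation), the observation that the presentation of $\CH^*(\Vcal\smallsetminus\Zcal)$ and the formation of $[\Dcal^3]$, $\zeta$, $[\Dcal^{3,3}]$ are compatible with specialization, and then Corollary \ref{cor:Chow M2bar abs} to conclude. Your closing remark on passing from residue fields of DVRs to arbitrary fields of characteristic $\neq 2,3$ is a small completeness point the paper leaves implicit, but it does not change the argument.
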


\appendix
\section{Deformation theory of nodal singularities}\label{sec:app}
The following is well known to the experts, but we do not know a fully rigorous treatment in the literature.

Consider the stack $\fM_{g} \arr \aff$ of geometrically reduced and geometrically connected local complete intersection curves of arithmetic genus $g$ (from now on \emph{lci curves}); this is a smooth algebraic stack. A family of lci curves $\phi\colon C \arr S$, where $S$ is an algebraic stack of finite type, is called \emph{versal} if the corresponding morphism $S \arr \fM_{g}$ is smooth. If $\phi\colon C \arr S$ is versal, both $S$ and $C$ are smooth over $k$.

Let $\phi\colon C \arr S$ is a family of lci curves, denote by $\Delta \subseteq C$ its singular locus, defined, as usual, by the first Fitting ideal of the sheaf of relative differentials $\Omega_{C/S}$; this a closed subscheme of $C$, finite over $S$. Let $\Sigma \subseteq \Delta$ an open and closed subscheme of $\Delta$ such that the geometric points of $\Sigma \subseteq C$ correspond to nodes in the geometric fibers of $\phi\colon C \arr S$, and the projection $\Sigma \arr S$ is injective on geometric fibers.

We will use the following fundamental structure result (see for example \cite[\S 7]{TVdef}).

\begin{lm}\label{lem:local-around-node}
Assume that $k$ is algebraically closed and $S$ is a scheme. Let $p \in \Sigma(k)$ be a rational point; call $q$ its image in $S$. After passing to an \'{e}tale neighborhood of $q$ in $S$, we have that $S = \spec R$ is affine, and there is an \'{e}tale neighborhood $p \in U \xarr{\phi} C$ of $p$ in $C$, and an element $f \in R$ vanishing at $q$,  with an \'{e}tale map $\psi\colon U \arr \spec\bigr(R[x,y]/(xy-f)\bigr)$ such that $\phi^{-1}(\Sigma) = \psi^{-1}\spec\bigr(R[x,y]/(x,y,f)\bigr) \subseteq U$.


Furthermore, if $\phi\colon C \arr S$ is versal, then $f$ is part of a system of parameters of $S$ at $q$.

\end{lm}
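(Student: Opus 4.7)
The plan is to establish the local form of the family in two moves, and then derive the versality clause. First, pass to formal completions to construct an isomorphism of $\widehat{\cO}_{S,q}$-algebras $\widehat{\cO}_{C,p}\simeq\widehat{\cO}_{S,q}[[X,Y]]/(XY-f)$ for some $f\in\frm_{q}$; then use Artin approximation to descend this isomorphism to an \'{e}tale neighborhood. The versality statement will then be read off from the deformation theory of an isolated node.

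For the formal part, since $p$ is a node in the geometric fiber $C_{q}$ and $k$ is algebraically closed, $\widehat{\cO}_{C_{q},p}\simeq k[[x,y]]/(xy)$. Lifting $x,y$ to $X,Y\in\widehat{\cO}_{C,p}$ and applying Nakayama gives a surjection $A:=\widehat{\cO}_{S,q}[[X,Y]]\twoheadrightarrow\widehat{\cO}_{C,p}$. Since $\phi$ is lci of relative dimension $1$, $A$ is regular of dimension $\dim\widehat{\cO}_{S,q}+2$, and $\widehat{\cO}_{C,p}$ has dimension $\dim\widehat{\cO}_{S,q}+1$, the kernel is principal, generated by a regular element $g\in A$ with $g\equiv u\cdot XY\pmod{\frm_{q}A}$ for some unit $u$; rescaling, I may assume $g\equiv XY\pmod{\frm_{q}A}$. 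The main technical step is a parameter-dependent splitting (Morse) lemma bringing $g$ into normal form $X'Y'-f$ with $X'\equiv X$, $Y'\equiv Y$ modulo $(X,Y)^{2}+\frm_{q}(X,Y)$ and $f\in\frm_{q}$. This is done by successive approximation: writing $g=XY+\sum a_{ij}X^{i}Y^{j}$, at each inductive stage a mixed monomial $a_{ij}X^{i}Y^{j}$ with $i,j\ge 1$ is absorbed by modifying $X$ or $Y$, and a pure-power term $a_{i0}X^{i}$ with $i\ge 2$ (or $a_{0j}Y^{j}$) is killed by a similar substitution. This works because the partial derivatives of $XY$ are $Y$ and $X$, which generate the maximal ideal of $k[[X,Y]]$; the terms that depend on neither $X$ nor $Y$ accumulate into $-f\in\frm_{q}$, and convergence takes place in the $\bigl(\frm_{q}+(X,Y)\bigr)$-adic topology on $A$. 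Once the formal isomorphism is in hand, Artin approximation produces, after replacing $S$ by an \'{e}tale neighborhood of $q$ carrying a representative of $f$, an \'{e}tale morphism $\psi\colon U\arr\spec(R[x,y]/(xy-f))$ from an \'{e}tale neighborhood $U$ of $p$ in $C$. The identification $\phi^{-1}(\Sigma)\cap U=\psi^{-1}V(x,y,f)$ follows from a direct computation of the first Fitting ideal of $\Omega$ in the local model, which is $(x,y,f)$.

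For the versality clause, the scheme-theoretic image of $\Sigma\cap U$ in $S=\spec R$ is $V(f)$. Versality means that the classifying morphism $S\arr\fM_{g}$ is smooth at $q$. The isolated nodal singularity has universal deformation $\spec k[[x,y,t]]/(xy-t)\arr\spec k[[t]]$, so near $[C_{q}]$ the locus in $\fM_{g}$ along which the node at $p$ persists is a smooth effective Cartier divisor, locally cut out by a single coordinate. Its pullback to $S$ along the smooth morphism $S\arr\fM_{g}$ is therefore a smooth effective Cartier divisor at $q$, and coincides up to a unit with $V(f)$ (both parametrize families in which the node survives). Hence $f\notin\frm_{q}^{2}$, and consequently $f$ extends to a regular system of parameters of $R$ at $q$.

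The main obstacle is the parameter-dependent splitting lemma used above: one must organize the successive approximations so that the corrections converge in the correct topology, and verify that the \emph{nodal} nature of the singularity is used decisively (it is what provides the non-degenerate quadratic form $XY$ whose partial derivatives generate the maximal ideal, and what makes the smoothing parameter $f$ one-dimensional). Without this hypothesis, neither the splitting nor the final dimension count for versality would be available.
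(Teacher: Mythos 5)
The paper offers no proof of this lemma: it is quoted as a known structure result with a pointer to \cite{TVdef}, \S 7 (the same statement, with a complete proof, is also Tag~0CBY in the Stacks Project). Your argument is essentially that standard proof --- complete the local rings, put $\widehat{\cO}_{C,p}$ in the normal form $\widehat{\cO}_{S,q}[[X,Y]]/(XY-f)$ by a parameter\dash dependent Morse/splitting lemma, descend via Artin approximation, compute the first Fitting ideal in the model, and read the versal clause off the miniversal deformation $xy=t$ of the node --- so the route is the right one and there is nothing on the paper's side to compare it with.

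Two of your justifications need repair, however. First, you deduce that the kernel of $A=\widehat{\cO}_{S,q}[[X,Y]]\twoheadrightarrow\widehat{\cO}_{C,p}$ is principal from regularity of $A$ and a dimension count; but the first part of the lemma makes no smoothness assumption on $S$, so $\widehat{\cO}_{S,q}$, and hence $A$, need not be regular, and the count as written is unavailable. The correct argument uses flatness: lift $xy$ to an element $g$ of the kernel; since $g$ reduces to a nonzerodivisor in the closed fibre $k[[X,Y]]$, the fibrewise criterion shows $A/(g)$ is flat over $\widehat{\cO}_{S,q}$ with the same closed fibre $k[[x,y]]/(xy)$ as $\widehat{\cO}_{C,p}$, so the surjection $A/(g)\arr\widehat{\cO}_{C,p}$ becomes an isomorphism after $\otimes\, k$ and is therefore an isomorphism by Nakayama. (Relatedly, Artin approximation does not descend the formal $f$ itself: it produces a possibly different $f'$ over an \'{e}tale neighbourhood together with the \'{e}tale map to $\spec\bigl(R[x,y]/(xy-f')\bigr)$, which is all the statement requires.) Second, for the versal clause you invoke the assertion that the locus in $\fM_{g}$ near $[C_{q}]$ where the node persists is a smooth Cartier divisor; since that assertion is essentially this lemma applied to a versal atlas of $\fM_{g}$, the argument is circular as phrased. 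The non\dash circular version is via deformation functors: the forgetful map from $\operatorname{Def}(C_{q})$ to the deformation functor of $\widehat{\cO}_{C_{q},p}\simeq k[[x,y]]/(xy)$ is formally smooth (the obstruction to globalizing local deformations vanishes for a reduced lci curve), the latter is pro\dash represented by $k[[t]]$ with miniversal family $xy=t$, and the resulting classifying map $k[[t]]\arr\widehat{\cO}_{S,q}$ sends $t$ to $f$ up to a unit. Formal smoothness of this composite forces the image of $t$ to lie outside $\frm_{q}^{2}$, i.e.\ $f\notin\frm_{q}^{2}$, which is exactly the claim that $f$ is part of a regular system of parameters.
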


From this we obtain the following.

\begin{prop}\label{prop:lci}
The composite $\Sigma \subseteq \phi\colon C \arr S$ is a closed embedding. Furthermore, if $\phi\colon C \arr S$ is versal, then $\Sigma$ is smooth, of codimension~$2$ in $C$, and codimension~$1$ in $S$.
\end{prop}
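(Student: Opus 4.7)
The plan is to work étale-locally via Lemma \ref{lem:local-around-node} and combine the local picture with the hypothesis that $\Sigma \to S$ is injective on geometric fibers.

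\textbf{Closed embedding.} Since $\Sigma$ is open and closed in $\Delta$, and $\Delta \to S$ is finite, the composite $\Sigma \to S$ is finite; the hypothesis that $\Sigma \to S$ is injective on geometric fibers makes it universally injective (radicial). A finite, radicial, unramified morphism is a closed immersion, so it suffices to check that $\Sigma \to S$ is unramified. This is a property local on the source, and I would verify it as follows. Fix a geometric point $p \in \Sigma$ over $q \in S$, and apply Lemma \ref{lem:local-around-node}: after shrinking $S$ to an affine étale neighborhood $\spec R$ of $q$ and choosing an étale neighborhood $U \to C$ of $p$, the intersection $\phi^{-1}(\Sigma) \cap U$ is the preimage under the étale map $\psi\colon U \to \spec R[x,y]/(xy-f)$ of the closed subscheme $\spec\bigl(R/(f)\bigr)$. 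Hence $\phi^{-1}(\Sigma) \cap U$ is étale over $\spec\bigl(R/(f)\bigr)$, and the composite $\phi^{-1}(\Sigma) \cap U \to S$ factors as an étale morphism followed by the closed immersion $\spec\bigl(R/(f)\bigr) \hookrightarrow S$; this is unramified. Since $\phi^{-1}(\Sigma) \cap U \to \Sigma$ is itself étale and surjects onto a neighborhood of $p$, unramifiedness descends to $\Sigma \to S$ in a neighborhood of $p$. Ranging over $p$ gives that $\Sigma \to S$ is unramified everywhere, hence a closed immersion.

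\textbf{Smoothness and codimension in the versal case.} Assume $\phi$ is versal, so that $C$ and $S$ are smooth over $k$, and by the last clause of Lemma \ref{lem:local-around-node} the element $f$ is part of a system of parameters of $S$ at $q$. Then $\spec\bigl(R/(f)\bigr) \subseteq \spec R$ is smooth of codimension $1$ near $q$. Transporting through the étale morphism $\phi^{-1}(\Sigma) \cap U \to \spec\bigl(R/(f)\bigr)$ and the already-established closed embedding $\Sigma \hookrightarrow S$, one gets that $\Sigma$ is a smooth effective Cartier divisor in $S$ near $q$. Varying $q$ this is global, so $\Sigma$ is smooth of codimension $1$ in $S$. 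Since $\phi$ has relative dimension $1$ and $C$, $S$ are smooth over $k$, one has $\dim C = \dim S + 1$, so $\Sigma$ has codimension $2$ in $C$.

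\textbf{Main obstacle.} The subtlety is that Lemma \ref{lem:local-around-node} only gives an étale-local description of $\phi^{-1}(\Sigma)$ \emph{on $C$}, not of $\Sigma$ on $S$, so properties of $\Sigma \to S$ have to be deduced from properties of the auxiliary morphism $\phi^{-1}(\Sigma) \cap U \to S$. The cleanest way around this is to reduce the closed-embedding statement to the property of being unramified, which is local on the source and hence descends along the étale covers provided by the lemma; finiteness and universal injectivity are given by hypothesis and cheaply promote an unramified morphism to a closed immersion.
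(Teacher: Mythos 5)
Your argument is correct and follows essentially the same route as the paper's: unramifiedness of $\Sigma \to S$ is extracted \'{e}tale-locally from Lemma~\ref{lem:local-around-node} and combined with finiteness and universal injectivity to promote it to a closed immersion, while smoothness and the codimension counts are read off from $\spec\bigl(R/(f)\bigr)$ in the versal case — you have simply spelled out what the paper calls ``straightforward calculations.'' The only cosmetic omission is the initial reduction, which the paper states explicitly, to the case where $S$ is a scheme and $k$ is algebraically closed; this is needed before the lemma can be invoked, and is harmless since all the properties in question may be checked after base change to $\bar{k}$ and after pulling back along a smooth cover of $S$ by a scheme.
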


\begin{proof}
We can assume that $S$ is a scheme, and $k$ is algebraically closed. Then it follows from Proposition~\ref{lem:local-around-node}, with straightforward calculations, that $\Sigma \arr S$ is unramified, and that $\Sigma$ is smooth if $\phi\colon C \arr S$ is versal. Since $\Sigma \arr S$ is also finite and injective on geometric points, it follows that it is an embedding.
\end{proof}

Call $C_{\Sigma} \subseteq C$ the inverse image of $\Sigma \subseteq S$ in $C$; clearly $\Sigma$ is contained in $C_{\Sigma}$. The point of the following is that $\phi\colon C \arr S$ is versal, then we can describe both the normal bundle of $\Sigma \subseteq C$ and of $\Sigma\subseteq S$ using the embedding $\Sigma \arr C_{\Sigma}$.

Given a closed embedding $Y \subseteq X$ we will denote by $\rI_{Y}X$ the sheaf of ideals of $Y$ in $X$, by $\rI_{Y}^{(n)}X$ the sheaf $(\rI_{Y}X)^{n} \times_{\cO_{X}}\cO_{Y} = (\rI_{Y}X)^{n}/(\rI_{Y}X)^{n+1}$, by $\rN_{Y}X$ the relative spectrum over $Y$ of the symmetric algebra sheaf $\undersym_{\cO_{Y}}^{\smallbullet}\rI^{(1)}_{Y}X$, and by $\Gamma_{Y}X$ the normal cone of $Y$ in $X$. We have an obvious embedding of cones $\Gamma_{Y}X \subseteq \rN_{Y}X$.

If $f\colon X \arr X'$ is a morphism of schemes such that the composite $Y \arr X \xarr{f} X'$ is still a closed embedding, then for each $n \geq 0$ there are induced homomorphisms of sheaves of $\cO_{Y}$-modules $\rI_{Y}^{(n)}X' \arr \rI_{Y}^{(n)}X$, which induce morphisms of cones $\rN_{Y}X \arr \rN_{Y}X'$ and $\Gamma_{Y}X \arr \Gamma_{Y}X'$.

The following describes the normal bundle to $\Sigma$ in $C$ in term of the embedding $\Sigma \subseteq \Sigma$.

\begin{prop}\call{prop:normal-in-the-family}\hfil
\begin{enumerate1}

\itemref{1} The sheaf of $\cO_{\Sigma}$-modules $\rI^{(1)}_{\Sigma}C_{\Sigma}$ is locally free of rank~$2$, and the induced homomorphism $\rI^{(1)}_{\Sigma}C \arr \rI^{(1)}_{\Sigma}C_{\Sigma}$ is an isomorphism.

\itemref{2}  The cone $\rN_{\Sigma}C_{\Sigma} \arr \Sigma$ is a vector bundle of rank~$2$, and the morphism of cones $\rN_{\Sigma}C_{\Sigma} \arr \rN_{\Sigma}$ induced by the embedding $C_{\Sigma} \subseteq C$ is an isomorphism.
\end{enumerate1}
\end{prop}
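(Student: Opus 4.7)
The plan is to reduce both claims to the explicit local model provided by Lemma \ref{lem:local-around-node} and then verify them by a short direct calculation. Both statements are \'{e}tale-local on $\Sigma$, since formation of $\rI^{(1)}_\Sigma C$, $\rI^{(1)}_\Sigma C_\Sigma$ and of the associated cones commutes with \'{e}tale base change. So after \'{e}tale localization near an arbitrary geometric point of $\Sigma$, I may assume $S = \spec R$ and that $C$ is of the form $\spec A$ with $A = R[x,y]/(xy - f)$ for some $f \in R$, with $\Sigma \subseteq C$ cut out by the ideal $(x,y,f) \subseteq A$ and $\Sigma \subseteq S$ cut out by $(f) \subseteq R$.

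The whole argument rests on a single observation: since $f = xy$ in $A$, the element $f$ already lies in $(x,y)$, so
\[
I \;:=\; \rI_\Sigma C \;=\; (x,y,f) \;=\; (x,y);
\]
moreover $f = xy$ is a product of two elements of $I$, so $fA \subseteq I^2$. Granting this, the computation of $\bar I/\bar I^2$, where $\bar I$ denotes the ideal of $\Sigma$ inside $C_\Sigma = \spec \bar A$ with $\bar A := A/fA = (R/f)[x,y]/(xy)$, is immediate: since $xy = 0$ in $\bar A$ we have $\bar I^2 = (x^2, y^2)$, and using the obvious $R/f$-module basis $\{1\} \cup \{x^i\}_{i\ge 1} \cup \{y^j\}_{j\ge 1}$ of $\bar A$ one reads off that
\[
\bar I/\bar I^2 \;\simeq\; (R/f)\cdot x \;\oplus\; (R/f)\cdot y,
\]
which is free of rank $2$ over $\cO_\Sigma = R/f$. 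For the comparison map $I/I^2 \to \bar I/\bar I^2$, the kernel of the surjection $I \twoheadrightarrow \bar I$ equals $(f)\cap I = fA$ (because $f \in I$), and the containment $fA \subseteq I^2$ just observed means this kernel is already killed modulo $I^2$; hence the induced surjection $I/I^2 \twoheadrightarrow \bar I/\bar I^2$ is actually an isomorphism. This is exactly the content of part~\refpart{prop:normal-in-the-family}{1}.

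Part~\refpart{prop:normal-in-the-family}{2} then follows formally by applying $\underline{\spec}_\Sigma \undersym^\bullet_{\cO_\Sigma}$ to the isomorphism of conormal sheaves just produced: since $\rI^{(1)}_\Sigma C_\Sigma$ is locally free of rank $2$, the cone $\rN_\Sigma C_\Sigma$ is a vector bundle of rank $2$ over $\Sigma$, and the isomorphism in (1) yields the claimed isomorphism $\rN_\Sigma C_\Sigma \xrightarrow{\sim} \rN_\Sigma C$ of total spaces. I do not foresee any serious obstacle in carrying out this plan; the only point that might look delicate at first glance is the redundancy between the two descriptions $f$ and $xy$ of the same element of $A$, but this is resolved the instant one works in the local model, since the relation $f = xy$ is built into the definition of $A$.
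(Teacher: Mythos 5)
Your proposal is correct and follows exactly the route the paper takes: reduce to the local model $A=R[x,y]/(xy-f)$ of Lemma~\ref{lem:local-around-node}, observe that $\rI_\Sigma C=(x,y)$ with $fA\subseteq(\rI_\Sigma C)^2$, compute $\bar I/\bar I^2$ to be free of rank $2$ on $x,y$, and deduce (2) formally from (1). The paper's own proof simply asserts that these two facts "follow easily" from the local structure lemma; your write-up supplies precisely that verification.
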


\begin{proof}
For \refpart{prop:normal-in-the-family}{1}, we can assume that $S$ is a scheme, and $k$ is algebraically closed. It is enough to prove that $\rI^{(1)}_{\Sigma}C_{\Sigma}$ is locally free of rank~$2$ as an $\cO_{\Sigma}$-module, and that $\rI_{C_{\Sigma}}C$ is contained in $(\rI_{\Sigma}C)^{2}$. This follows easily from Proposition~\ref{lem:local-around-node}.

\refpart{prop:normal-in-the-family}{2} follows from \refpart{prop:normal-in-the-family}{1}.
\end{proof}

Set $N \eqdef \rN_{\Sigma}C_{\Sigma}$. There is a natural \'{e}tale representable double cover $\pi\colon \overline{\Sigma} \arr \Sigma$; a geometric point of $\overline{\Sigma}$ corresponds to a geometric point $p$ of $\Sigma$, together with one of the two branches of the fiber of $\phi\colon C \arr S$ at $p$. Formally, we define $\pi\colon \overline{\Sigma} \arr \Sigma$ as the projectivization $\PP(\Gamma_{\Sigma}C_{\Sigma})$ of the normal cone of $\Sigma$ in $C_{\Sigma}$. The fact that the projection $\overline{\Sigma} \arr \Sigma$ is an \'{e}tale double cover follows easily using Lemma~\ref{lem:local-around-node}. Denote by $\tau\colon \overline{\Sigma} \arr \overline{\Sigma}$ the involution switching the two sheets of $\pi\colon \overline{\Sigma} \arr \Sigma$. 

The embedding $\Gamma\subseteq N$ gives an embedding $\overline{\Sigma} \subseteq \PP(N)$; denote by $L \subseteq \pi^{*}N$ the pullback of the invertible sheaf $\cO_{\PP(N)}(-1)$. We also have another invertible subsheaf $\tau^{*}L \subseteq \tau^{*}\pi^{*}N = \pi^{*}N$, giving a decomposition $L \oplus \tau^{*}L = N$. Furthermore, the corresponding homomorphism $\pi_{*}L \arr N$ is an isomorphism.

There are two structures of $\generate{\tau}$-equivariant invertible sheaf on $L\otimes \tau^{*}L$, one in which acts by sending $t \times t'$ to $t'\otimes t$, and the second one to $-t'\otimes t$. These yield by descent two invertible sheaves on $\Sigma$; the second one gives the determinant $\det \pi_{*}L = \det N$, the first one an invertible sheaf that we denote by $M$ (this is the norm of $L$). An alternate description of $M$ is as $(\det N)\otimes P$, where $P$ is the invertible sheaf on $\Sigma$ obtained by descent from the representation $\generate{\tau}\arr\gm$ sending $\tau$ to $-1$.

Notice that the formation of $M$ commutes with base change under arbitrary maps $S' \arr S$.

This gives a description of the normal bundle of $\Sigma$ in $S$, when $\phi\colon C \arr S$ is versal.

\begin{prop}\label{prop:normal-in-the-base}
Assume that $\phi\colon C \arr S$ is versal. Then there is a canonical isomorphism $M \simeq \rN_{\Sigma}S$ of invertible sheaves on $\Sigma$. This isomorphism commutes with smooth base change $S' \arr S$.
\end{prop}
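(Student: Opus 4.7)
The plan is to exhibit $M^\vee$ and the conormal sheaf $\rI^{(1)}_\Sigma S$ as the same canonical line subsheaf of $\Sym^2\cE$, where $\cE := \rI^{(1)}_\Sigma C_\Sigma \simeq \rI^{(1)}_\Sigma C$ by Proposition \ref{prop:normal-in-the-family}; dualising will then yield the desired isomorphism $M \simeq \rN_\Sigma S$. The argument will work intrinsically on $\Sigma$ and use the local coordinates of Lemma \ref{lem:local-around-node} only to verify that certain natural maps are isomorphisms at the nodes.

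First, dualising the decomposition $\pi^*N = L \oplus \tau^*L$ on $\overline{\Sigma}$ gives $\pi^*\cE = L^\vee \oplus \tau^*L^\vee$, whence
\[
\pi^*\Sym^2\cE \;=\; L^{\vee\otimes 2} \,\oplus\, (L^\vee \otimes \tau^*L^\vee) \,\oplus\, (\tau^*L)^{\vee\otimes 2}.
\]
In the local model, the relation $xy = 0$ in $\cO_{C_\Sigma}$ shows that the pulled-back multiplication map $\pi^*\Sym^2\cE \twoheadrightarrow \pi^*\rI^{(2)}_\Sigma C_\Sigma$ kills exactly the middle summand $L^\vee \otimes \tau^*L^\vee$. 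Descending this middle summand via $\pi$ with the symmetric $\mmu_2$-action produces a canonical line subsheaf $M^\vee \hookrightarrow \Sym^2\cE$, equal to the kernel of $\Sym^2\cE \twoheadrightarrow \rI^{(2)}_\Sigma C_\Sigma$.

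Second, versality of $\phi\colon C \to S$ implies, via Lemma \ref{lem:local-around-node}, that at each node $p$ the coordinates $x,y$ are part of a regular system of parameters in the regular local ring $\cO_{C,p}$; therefore $(x,y)$ is a regular sequence generating $\rI_\Sigma C$, so $\bigoplus_n \rI^{(n)}_\Sigma C$ is a polynomial ring on $\cE$ over $\cO_\Sigma$, and the natural map $\Sym^2\cE \to \rI^{(2)}_\Sigma C$ is an isomorphism. Locally $\rI_{C_\Sigma}C = (f) = (xy) \subseteq \rI_\Sigma^2 C$, so multiplication by $f$ gives an inclusion $\rI^{(1)}_{C_\Sigma}C\rest{\Sigma} \hookrightarrow \rI^{(2)}_\Sigma C \simeq \Sym^2\cE$ whose image is $\cO_\Sigma \cdot \overline{xy}$, i.e., once again the kernel of $\Sym^2\cE \twoheadrightarrow \rI^{(2)}_\Sigma C_\Sigma$. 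Finally, $\phi^{-1}\Sigma = C_\Sigma$ scheme-theoretically, so $\phi^*\rI_\Sigma S = \rI_{C_\Sigma}C$, and restriction along the isomorphism $\phi\rest{\Sigma}\colon \Sigma \xrightarrow{\sim} \Sigma \subseteq S$ yields a canonical isomorphism $\rI^{(1)}_\Sigma S \simeq \rI^{(1)}_{C_\Sigma}C\rest{\Sigma}$.

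Combining the three identifications gives a canonical isomorphism $\rI^{(1)}_\Sigma S \simeq M^\vee$ on $\Sigma$; dualising produces $M \simeq \rN_\Sigma S$ as claimed. Every step in the construction commutes with smooth base change $S' \to S$: conormal sheaves, symmetric powers, the normal cone and its projectivisation, the line bundle $L$, and $\mmu_2$-descent are all compatible with flat base change. The main point requiring care is the agreement of the two candidate sub-line-bundles with $\ker\bigl(\Sym^2\cE \twoheadrightarrow \rI^{(2)}_\Sigma C_\Sigma\bigr)$; both descriptions rest crucially on the versality hypothesis, which ensures the isomorphism $\Sym^2\cE \simeq \rI^{(2)}_\Sigma C$ and thereby bridges the descent-theoretic and the deformation-theoretic perspectives.
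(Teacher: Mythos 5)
Your proof is correct and follows essentially the same route as the paper's: both identify $M^\vee$ and $\rI^{(1)}_\Sigma S$ with the "cross-term" line $L^\vee\otimes\tau^*L^\vee$ sitting inside $\rI^{(2)}_\Sigma C$ via the decomposition coming from the double cover $\overline{\Sigma}\to\Sigma$, using the local model to check the identifications. The only (cosmetic) difference is that you characterize both subsheaves as the kernel of $\Sym^2\cE\twoheadrightarrow\rI^{(2)}_\Sigma C_\Sigma$, whereas the paper directly compares the images of two injections into $\rI^{(2)}_\Sigma C$.
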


\begin{proof}
The fact that $\phi\colon C \arr S$ is singular along $\Sigma$ ensures that the homomorphism $\phi^{*}\rI_{\Sigma}S \arr \rI_{\Sigma}C$ has image contained in $(\rI_{\Sigma}C)^{2}$; by restricting to $\Sigma$ we obtain a homomorphism $\rI^{(1)}_{\Sigma}S \arr \rI^{(2)}_{\Sigma}C$. A straightforward local calculation using Lemma~\ref{lem:local-around-node} reveals that this is injective, so we get an embedding $\rI^{(1)}_{\Sigma}S \subseteq \rI^{(2)}_{\Sigma}C$.

On the other hand, let us produce a homomorphism $M^{\vee} \arr \rI^{(2)}_{\Sigma}C$. We have a decomposition $\pi^{*}(\rI^{(1)}_{\Sigma}C_{\Sigma}) = L^{\vee} \oplus \tau^{*}L^{\vee}$, which gives an embedding
   \[
   L^{\vee}\otimes\tau^{*}L^{\vee} \subseteq \pi^{*}(\rI^{(1)}_{\Sigma}C_{\Sigma})
   \otimes \pi^{*}(\rI^{(1)}_{\Sigma}C_{\Sigma})\,.
   \]
By composing this with the product $\pi^{*}(\rI^{(1)}_{\Sigma}C_{\Sigma})\otimes \pi^{*}(\rI^{(1)}_{\Sigma}C_{\Sigma}) \arr \pi^{*}(\rI^{(2)}_{\Sigma}C_{\Sigma})$ we obtain a homomorphism
   \[
   L^{\vee}\otimes\tau^{*}L^{\vee} \arr \pi^{*}(\rI^{(2)}_{\Sigma}C_{\Sigma})\,.
   \]
Because of the commutativity of multiplication this is $\generate{\tau}$-equivariant, so it descends to a homomorphism $M^{\vee} \arr \rI^{(2)}_{\Sigma}C_{\Sigma}$. Once again, local calculations using Lemma~\ref{lem:local-around-node} reveal that this is injective, its image coincides with the image of $\rI^{(1)}_{\Sigma}S$. Thus we obtain an isomorphism $M^{\vee} \simeq \rI^{(1)}_{\Sigma}S$, and by dualizing the desired isomorphism $M \simeq \rN_{\Sigma}S$.
\end{proof}

\begin{bibdiv}
\begin{biblist}

\bib{dan-alessio-vistoli03}{article}{
   author={Abramovich, Dan},
   author={Corti, Alessio},
   author={Vistoli, Angelo},
   title={Twisted bundles and admissible covers},
   note={Special issue in honor of Steven L. Kleiman},
   journal={Comm. Algebra},
   volume={31},
   date={2003},
   number={8}
}

\bib{dan-tom-angelo2008}{article}{
   author={Abramovich, Dan},
   author={Graber, Tom},
   author={Vistoli, Angelo},
   title={Gromov-Witten theory of Deligne-Mumford stacks},
   journal={Amer. J. Math.},
   volume={130},
   date={2008},
   number={5}
}

\bib{dan-olsson-vistoli1}{article}{
   author={Abramovich, Dan},
   author={Olsson, Martin},
   author={Vistoli, Angelo},
   title={Tame stacks in positive characteristic},
   language={English, with English and French summaries},
   journal={Ann. Inst. Fourier (Grenoble)},
   volume={58},
   date={2008},
   number={4}
}
\bib{dan-olsson-vistoli-2}{article}{
   author={Abramovich, Dan},
   author={Olsson, Martin},
   author={Vistoli, Angelo},
   title={Twisted stable maps to tame Artin stacks},
   journal={J. Algebraic Geom.},
   volume={20},
   date={2011},
   number={3}
}

\bib{AVComp}{article}{
    AUTHOR = {Abramovich, Dan},
    AUTHOR = {Vistoli, Angelo},
     TITLE = {Compactifying the space of stable maps},
   JOURNAL = {J. Amer. Math. Soc.},
  FJOURNAL = {Journal of the American Mathematical Society},
    VOLUME = {15},
      YEAR = {2002},
    NUMBER = {1},
     PAGES = {27--75}
}

\bib{ArsVis}{article}{			
	author={Arsie, Alessandro},
	author={Vistoli, Angelo},			
	title={Stacks of cyclic covers of projective spaces},			
	journal={Compos. Math.},
	volume={140},			
	date={2004},			
	number={3}	
}

\bib{AtiEll}{book}{
    AUTHOR = {Atiyah, Michael Francis},
     TITLE = {Elliptic operators and compact groups},
    SERIES = {Lecture Notes in Mathematics, Vol. 401},
 PUBLISHER = {Springer-Verlag, Berlin-New York},
      YEAR = {1974},
     PAGES = {ii+93}
}

\bib{Dil}{article}{
    author={Di Lorenzo, Andrea},
    title={The {C}how ring of the stack of hyperelliptic curves of odd genus},
    journal={Int. Math. Res. Not.},
    Volume={4},
    Year={2021},
    doi={10.1093/imrn/rnz101}
}

\bib{BreEq}{article}{
    AUTHOR = {Bredon, Glen E.},
     TITLE = {The free part of a torus action and related numerical
              equalities},
   JOURNAL = {Duke Math. J.},
  FJOURNAL = {Duke Mathematical Journal},
    VOLUME = {41},
      YEAR = {1974},
     }

\bib{Bri}{article}{
    AUTHOR = {Brion, Michel},
     TITLE = {Equivariant {C}how groups for torus actions},
   JOURNAL = {Transform. Groups},
  FJOURNAL = {Transformation Groups},
    VOLUME = {2},
      YEAR = {1997},
    NUMBER = {3},

}

\bib{ChSk}{article}{
    AUTHOR = {Chang, Theodore},
    AUTHOR = {Skjelbred, Tor},
     TITLE = {The topological {S}chur lemma and related results},
   JOURNAL = {Ann. of Math. (2)},
  FJOURNAL = {Annals of Mathematics. Second Series},
    VOLUME = {100},
      YEAR = {1974},
     PAGES = {307--321}
}

\bib{DLFV}{article}{
    author={Di Lorenzo, Andrea},
    author={Fulghesu, Damiano},
    author={Vistoli, Angelo},
    title={The integral {C}how ring of the stack of smooth non-hyperelliptic curves of genus three},
    journal={Trans. Amer. Math. Soc.},
    date={2020},
    eprint={https://doi.org/10.1090/tran/8354}
}

\bib{EFRat}{article}{
   author={Edidin, Dan},
   author={Fulghesu, Damiano},
   title={The integral {C}how ring of the stack of at most 1-nodal rational
   curves},
   journal={Comm. Algebra},
   volume={36},
   date={2008},
   number={2}
}
	
\bib{EF}{article}{
   author={Edidin, Dan},
   author={Fulghesu, Damiano},
   title={The integral Chow ring of the stack of hyperelliptic curves of
   even genus},
   journal={Math. Res. Lett.},
   volume={16},
   date={2009},
   number={1}
}

\bib{EG}{article}{
    author={Edidin, Dan},
	author={Graham, William},
	title={Equivariant intersection theory},
	journal={Invent. Math.},
	volume={131},
	date={1998},
	number={3}
}

\bib{EGLoc}{article}{
   author={Edidin, Dan},
   author={Graham, William},
   title={Localization in equivariant intersection theory and the Bott
   residue formula},
   journal={Amer. J. Math.},
   volume={120},
   date={1998},
   number={3}
}

\bib{EHKV}{article}{
    AUTHOR = {Edidin, Dan}
    AUTHOR = {Hassett, Brendan},
    AUTHOR = {Kresch, Andrew},
    AUTHOR = {Vistoli, Angelo},
     TITLE = {Brauer groups and quotient stacks},
   JOURNAL = {Amer. J. Math.},
  FJOURNAL = {American Journal of Mathematics},
    VOLUME = {123},
      YEAR = {2001},
    NUMBER = {4},
     PAGES = {761--777}
}

\bib{FVis}{article}{
   author={Fulghesu, Damiano},
   author={Vistoli, Angelo},
   title={The Chow ring of the stack of smooth plane cubics},
   journal={Michigan Math. J.},
   volume={67},
   date={2018},
   number={1}
}

\bib{FV}{article}{
   author={Fulghesu, Damiano},
   author={Viviani, Filippo},
   title={The Chow ring of the stack of cyclic covers of the projective
   line},
   language={English, with English and French summaries},
   journal={Ann. Inst. Fourier (Grenoble)},
   volume={61},
   date={2011},
   number={6}
}

\bib{Ful}{book}{
   author={Fulton, William},
   title={Intersection theory},
   series={Ergebnisse der Mathematik und ihrer Grenzgebiete. 3. Folge. A
   Series of Modern Surveys in Mathematics [Results in Mathematics and
   Related Areas. 3rd Series. A Series of Modern Surveys in Mathematics]},
   volume={2},
   edition={2},
   publisher={Springer-Verlag, Berlin},
   date={1998}
}

\bib{GKMP}{article}{
    AUTHOR = {Goresky, Mark},
    AUTHOR = {Kottwitz, Robert},
    AUTHOR = {MacPherson, Robert},
     TITLE = {Equivariant cohomology, {K}oszul duality, and the localization
              theorem},
   JOURNAL = {Invent. Math.},
  FJOURNAL = {Inventiones Mathematicae},
    VOLUME = {131},
      YEAR = {1998},
    NUMBER = {1},
     PAGES = {25--83}
}

\bib{hall-base-change}{article}{
   author={Hall, Jack},
   title={Cohomology and base change for algebraic stacks},
   journal={Math. Z.},
   volume={278},
   date={2014},
   number={1-2}
}

\bib{Hsi}{book}{
    AUTHOR = {Hsiang, Wu-yi},
     TITLE = {Cohomology theory of topological transformation groups},
      NOTE = {Ergebnisse der Mathematik und ihrer Grenzgebiete, Band 85},
 PUBLISHER = {Springer-Verlag, New York-Heidelberg},
      YEAR = {1975},
     PAGES = {x+164}
}

\bib{KiQuo}{book}{
    AUTHOR = {Kirwan, Frances Clare},
     TITLE = {Cohomology of quotients in symplectic and algebraic geometry},
    SERIES = {Mathematical Notes},
    VOLUME = {31},
 PUBLISHER = {Princeton University Press, Princeton, NJ},
      YEAR = {1984},
     PAGES = {i+211}
}

\bib{Kre}{article}{
   author={Kresch, Andrew},
   title={Cycle groups for Artin stacks},
   journal={Invent. Math.},
   volume={138},
   date={1999},
   number={3}
}

\bib{Lars}{article}{
    author={Larson, Eric},
    title={The integral {C}how ring of $\overline{M}_2$},
    status={preprint},
    eprint={https://arxiv.org/abs/1904.08081},
    date={2019}
}

\bib{MolVis}{article}{
   author={Molina Rojas, Luis Alberto},
   author={Vistoli, Angelo},
   title={On the Chow rings of classifying spaces for classical groups},
   journal={Rend. Sem. Mat. Univ. Padova},
   volume={116},
   date={2006}
}

\bib{Mum}{article}{
   author={Mumford, David},
   title={Towards an enumerative geometry of the moduli space of curves},
   conference={
      title={Arithmetic and geometry, Vol. II},
   },
   book={
      series={Progr. Math.},
      volume={36},
      publisher={Birkh\"{a}user Boston, Boston, MA},
   },
   date={1983}
}

\bib{olsson-starr-quot}{article}{
   author={Olsson, Martin},
   author={Starr, Jason},
   title={Quot functors for Deligne-Mumford stacks},
   note={Special issue in honor of Steven L. Kleiman},
   journal={Comm. Algebra},
   volume={31},
   date={2003},
   number={8}
}

\bib{Pap}{article}{
   author={Pappas, Georgios},
   title={Integral Grothendieck-Riemann-Roch theorem},
   journal={Invent. Math.},
   volume={170},
   date={2007},
   number={3}
}

\bib{Per}{article}{
    author={Pernice, Michele},
    title={The integral Chow ring of the stack of 1-pointed hyperelliptic curves},
    status={preprint},
    date={2020}
    eprint={https://arxiv.org/abs/2005.01513}
}

\bib{stacks-project}{misc}{
    label={{S}t{P}roj},
    title={{T}he {S}tacks {P}roject},
    author={The {Stacks Project Authors}},
    publisher = {https://stacks.math.columbia.edu},
}

\bib{TVdef}{incollection}{
	author = {Talpo, Mattia},
	author = {Vistoli, Angelo},
	booktitle = {Handbook of moduli. {V}ol. {III}},
	pages = {281--397},
	publisher = {Int. Press, Somerville, MA},
	series = {Adv. Lect. Math. (ALM)},
	title = {Deformation theory from the point of view of fibered categories},
	volume = {26},
	year = {2013}
}

\bib{VV}{article}{
    AUTHOR = {Vezzosi, Gabriele},
    AUTHOR = {Vistoli, Angelo},
    TITLE = {Higher algebraic {$K$}-theory for actions of diagonalizable groups},
   JOURNAL = {Invent. Math.},
  FJOURNAL = {Inventiones Mathematicae},
    VOLUME = {153},
      YEAR = {2003},
    NUMBER = {1},
     PAGES = {1--44}
}

\bib{VisM2}{article}{
   author={Vistoli, Angelo},
   title={The {C}how ring of $\scr M_2$. Appendix to ``Equivariant
   intersection theory'' by D. Edidin and W. Graham},
   journal={Invent. Math.},
   volume={131},
   date={1998},
   number={3}
}

\bib{VisPGLp}{article}{
   author={Vistoli, Angelo},
   title={On the cohomology and the Chow ring of the classifying space of
   ${\rm PGL}_p$},
   journal={J. Reine Angew. Math.},
   volume={610},
   date={2007}
}

\end{biblist}
\end{bibdiv}
\end{document}